\documentclass[10pt]{amsart}

\usepackage[T1]{fontenc}
\usepackage{amsmath,amssymb,mathtools}
\usepackage{mathrsfs}
\usepackage{microtype}
\usepackage{tikz}
\usepackage{tikz-cd}
\usepackage{enumitem}
\usepackage[margin=0.95in]{geometry}
\usepackage[colorlinks=true,citecolor=blue,linkcolor=blue,urlcolor=blue]{hyperref}

\newtheorem{theorem}{Theorem}[section]
\newtheorem{proposition}[theorem]{Proposition}
\newtheorem{lemma}[theorem]{Lemma}

\theoremstyle{definition}
\newtheorem{definition}[theorem]{Definition}

\theoremstyle{remark}
\newtheorem{remark}[theorem]{Remark}
\newtheorem*{theoremA}{Theorem A}
\newtheorem*{theoremB}{Theorem B}

\def\C{{\mathbb{C}}}
\def\Z{{\mathbb{Z}}}

\def\R{{\mathcal{R}}}
\def\O{{\mathcal{O}}}
\def\T{{\mathcal{T}}}

\def\K{{\mathcal{K}}}

\def\L{{\Lambda}}
\def\F{{\mathcal{F}}}

\def\P{{\mathcal{P}}}
\newcommand{\KP}{\mathcal{KP}}

\newcommand{\vloop}{v}

\def\G{{\tilde{G}}}
\def\Gr{\mathrm{Gr}}

\def\Coh{\mathrm{Coh}}

\newcommand{\Hom}{\mathrm{Hom}}

\newcommand{\conv}{\mathbin{*}}
\newcommand{\nconv}{\mathbin{\odot}}
\newcommand{\cl}{\mathrm{cl}}

\title[Categorified Cluster Structures]
{Categorified Cluster Structures\\
on Coulomb Branches of\\
Certain Star-Shaped Quivers}
\author{Tianle Liu}
\address{Department of Mathematics, University of Southern California}
\email{tianleli@usc.edu}

\date{}
\makeatletter
\@ifundefined{subjclassname@2020}{%
  \@namedef{subjclassname@2020}{%
    \textup{2020} Mathematics Subject Classification}%
}{}
\makeatother
\subjclass[2020]{13F60, 18M05, 14M15, 19E08}
\keywords{cluster algebra, monoidal categorification, Coulomb branch,
Koszul-perverse sheaves, affine Grassmannian}
\hypersetup{
  pdftitle={Categorified Cluster Structures on Coulomb Branches of Certain Star-Shaped Quivers},
  pdfauthor={Tianle Liu}
}

\begin{document}

\begin{abstract}
We study the quantized $K$-theoretic Coulomb branches of framed
star-shaped quivers with central gauge dimension $n\geq2$,
$r-1$ one-dimensional outer gauge vertices, and a one-dimensional
framing at the central vertex.
For $n=2$ and arbitrary $r\geq1$, and for every $n\geq2$ with
$1\leq r\leq5$, we prove that the loop-graded Koszul-perverse heart
is a monoidal categorification of the associated quantum cluster algebra
over $\Z[\vloop^{\pm1}]$.
This quantum cluster algebra coincides with its upper quantum cluster algebra.
For all $n\geq2$ and $r\geq1$, we construct an explicit initial seed
and identify the associated upper quantum cluster algebra with the
Grothendieck ring of this heart.
Every mutation is realized by a short exact sequence, and every
quantum cluster monomial is represented by a simple object.
\end{abstract}

\maketitle
\enlargethispage{4pt}

\section{Introduction}

For integers $n\geq2$ and $r\geq1$, let
\begin{equation}
\label{eq:intro-pair}
 G_{n,r}
 =\operatorname{im}\bigl(GL_n\times(\C^*)^r
   \longrightarrow GL(N_{n,r})\bigr),
 \qquad N_{n,r}=(\C^n)^{\oplus r},
\end{equation}
where $(g,z_1,\ldots,z_r)$ acts on the $a$th summand of $N_{n,r}$ by $gz_a$.
This is the gauge theory of a framed star-shaped quiver with central
dimension $n$, $r-1$ one-dimensional outer vertices, and a
one-dimensional framing at the central vertex
(Section~\ref{sec:group}).
The loop-rotation equivariant $K$-theory of the classical space of triples
$\R_{G_{n,r},N_{n,r}}^{\cl}$ \cite{Nak16,BFN18,BFN19} quantizes the
$K$-theoretic Coulomb branch of this pair.

Cautis and Williams work with the derived space of triples
$\R_{G_{n,r},N_{n,r}}$ and equip its equivariant coherent category
with a Koszul-perverse $t$-structure. Its heart is a rigid
monoidal category of finite length with renormalized $r$-matrices
\cite[Thms.~4.1, 5.1, 6.1, and 7.1]{CW23}, and its simple objects
give a positive integral basis of the quantized Coulomb branch
\cite[Cor.~1.3]{CW23}.
We prove that the graded heart is a monoidal categorification of a quantum
cluster algebra when $n=2$ and $r\geq1$, and when $n\geq2$ and
$1\leq r\leq5$. The identification of its Grothendieck ring with the
upper quantum cluster algebra and the categorical realization of all
mutations hold for every $n\geq2$ and $r\geq1$.

Let $\mathcal C_{n,r}$ denote this heart, with the loop-rotation
grading and the shift conventions specified in
Sections~\ref{sec:loop-rotation-cover} and~\ref{sec:gln-signs-upper}.
Write $\vloop$ for the parameter recording this grading.
To describe the initial seed, use the coordinates $A=gz_1$ and
$h_a=z_1/z_a$ for $a\geq2$, which identify
$G_{n,r}$ with $GL_n\times(\C^*)^{r-1}$.
Let $\omega_k$ be the coweight that acts by $t$ on the first $k$
coordinates of the standard $GL_n$ representation and trivially on
the remaining coordinates and outer torus factors. Let $\gamma_a$
act by $t$ on the $a$th summand of $N_{n,r}$ and trivially on the others.

The simple object $\mathcal P_{\gamma_a,0}$ corresponds to the trivial
equivariant line bundle on the point orbit of $\gamma_a$ in the affine
Grassmannian.
The simple object $\mathcal P_{\omega_k,\ell}$ corresponds to
$(\det Q_k)^\ell$, where $Q_k$ is the rank-$k$ tautological quotient
on the orbit of $\omega_k$.
In both cases, the line bundle is pulled back to the classical support
in the space of triples and pushed forward with the cohomological
and grading shifts specified in Section~\ref{sec:gln-signs-upper}.
The initial mutable objects are
\begin{equation}
\label{eq:intro-general-A0}
 \mathcal P_{\gamma_a,0}\quad(1\leq a\leq r),\qquad
 \mathcal P_{\omega_k,0},\ \mathcal P_{\omega_k,1}
                    \quad(1\leq k<n).
\end{equation}
The one-dimensional representations $\det A,h_2,\ldots,h_r$ similarly
determine $r$ frozen, tensor-invertible objects, with shifts given in
the same section. Write $\mathcal A_{n,r}$ and $\mathcal U_{n,r}$ for
the quantum cluster algebra and upper quantum cluster algebra of this
seed over $\Z[\vloop^{\pm1}]$, with these frozen variables inverted.
A simple object is called real if its convolution square is simple.

\pagebreak[3]
\begin{theoremA}
Suppose either $n=2$ and $r\geq1$, or $n\geq2$ and $1\leq r\leq5$.
There is an explicit compatible pair
$(\Lambda_{n,r},\widetilde B_{n,r})$ with $2n+r-2$ mutable and
$r$ invertible frozen directions. Sending the initial mutable and
frozen variables to the classes of the objects in
\eqref{eq:intro-general-A0} and the objects associated with $\det A,h_2,\ldots,h_r$ defines
an isomorphism of $\Z[\vloop^{\pm1}]$-algebras
\[
 \iota\colon\mathcal A_{n,r}=\mathcal U_{n,r}
 \xrightarrow{\ \sim\ } K_0(\mathcal C_{n,r}).
\]
Under this isomorphism:
\begin{enumerate}[label=\textup{(\roman*)}]
 \item every quantum cluster variable is represented by a real simple
 Koszul-perverse object;
 \item every quantum cluster monomial is represented by a simple
 convolution product with the grading shift prescribed by the
 quantum-torus multiplication; and
 \item every finite mutation sequence is realized by quantum monoidal
 seeds and short exact sequences in the Koszul-perverse heart.
\end{enumerate}
Thus $\mathcal C_{n,r}$ is a monoidal categorification of
$\mathcal A_{n,r}$ in these ranges.
See Theorem~\ref{thm:family-categorification}.
\end{theoremA}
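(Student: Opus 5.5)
The plan is to reduce everything to the Kashiwara--Kim--Oh--Park criterion for monoidal categorification: in a rigid monoidal abelian category of finite length with renormalized $r$-matrices, which $\mathcal C_{n,r}$ is by \cite[Thms.~4.1, 5.1, 6.1, 7.1]{CW23}, it suffices to exhibit a single \emph{admissible} quantum monoidal seed. Concretely, we must show three things. First, the initial objects \eqref{eq:intro-general-A0} together with the frozen invertibles are real simples that pairwise commute up to grading shift. Second, their $\Lambda$-invariants reproduce $\Lambda_{n,r}$. Third, for each mutable direction $k$ there is a real simple $\mathcal P_k'$ with $\mathfrak d(\mathcal P_k,\mathcal P_k')=1$ whose class satisfies the exchange relation encoded by $\widetilde B_{n,r}$. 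Once this holds, the KKOP machinery propagates admissibility along every mutation sequence. That gives (i)--(iii), and in particular shows that every mutation is realized by a short exact sequence $0\to \mathcal P_k'\conv\mathcal P_k\to\cdots\to\cdots\to0$ whose outer terms are products of neighbours.

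I would carry this out in the following order.

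(a) Compute the pairings. All initial objects are supported on minuscule orbits $\Gr^{\omega_k}$ or on point orbits $\Gr^{\gamma_a}$, so their convolutions can be computed by pushing forward along convolution diagrams over these small orbits. Doing so gives the degree shifts $\Lambda$ and shows commutativity. Realness is automatic for these objects: line bundles on minuscule orbits have simple convolution squares.

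(b) Construct the mutated objects $\mathcal P_{\omega_k,\ell}'$. Following the $GL_n$ pattern of Cautis--Williams, these should be $\mathcal P_{\omega_k,-1}$ or $\mathcal P_{\omega_k,2}$ twisted by frozen variables. The required short exact sequences come from the tautological sequences $0\to S_k\to\C^n\to Q_k\to0$ on $\Gr^{\omega_k}$, combined with the Koszul complex for the fibre of $N_{n,r}$. The $\gamma_a$-directions behave like $\C^*$ with one-dimensional matter, and their exchange relation is the basic abelian one.

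(c) Check $\mathfrak d=1$ in each direction by identifying the $r$-matrix cokernel.

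Step (b) yields admissibility in every case, and hence $\mathcal U_{n,r}\supseteq\mathcal A_{n,r}$ embeds into $K_0(\mathcal C_{n,r})$. To identify $K_0$ with $\mathcal U_{n,r}$ for all $n,r$, I would compare with the classical Coulomb branch. One route is to use the abelianization embedding into the quantum torus of $T\times(\C^*)^{r-1}$ (monopole formula), and then check that both algebras are the intersection of the Laurent rings over the initial seed and its one-step mutations. This is a starfish-lemma argument, which needs codimension-two control: the one-step mutated cluster variables must be pairwise coprime with the initial ones, as can be seen from their leading monomials.

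The equality $\mathcal A_{n,r}=\mathcal U_{n,r}$ is where the restricted ranges enter, and I expect it to be the main obstacle. My first attempt would be to show that the exchange matrix is acyclic or locally acyclic, via Muller's Banff algorithm, or else that the seed has a maximal green sequence together with a reddening sequence. For $n=2$ the quiver should reduce to a small acyclic pattern in the $\gamma_a$-vertices for every $r$. For $r\le5$, I would instead attempt an explicit Louise-type covering by cutting at frozen-adjacent edges, and compute the finitely many resulting subquivers.

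Generating the full Grothendieck ring by cluster monomials, or at least showing that $\mathcal A_{n,r}$ contains the generators $\mathcal P_{\omega_k,\ell}$ for all $\ell$, would then follow from the exchange relations iterated along the $\ell$-direction.
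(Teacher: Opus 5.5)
\paragraph{The main gap: $\mathcal A_{n,r}=\mathcal U_{n,r}$.}
Your outline has the same shape as the paper's: a real simple initial seed, one-step exchange sequences, propagation by the $r$-matrix machinery, an identification $K_0(\mathcal C_{n,r})\cong\mathcal U_{n,r}$, and then $\mathcal A_{n,r}=\mathcal U_{n,r}$. The last step is exactly what Theorem A adds to Theorem B, and the tools you name do not deliver it.
\begin{itemize}
\item Acyclicity fails. For $n=2$, mutating at all $u_a$ changes the signed multiplicity of $y_1\to z_1$ to $2-r$. For $r=4$ the principal part is the four-punctured-sphere quiver of type $D_4^{(1,1)}$, which has no acyclic seed, and for $r\geq5$ it is mutation-infinite (Remark~\ref{rem:acyclic-small-rank}). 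So the $n=2$ quiver does not reduce to an acyclic pattern for every $r$.
\item For the Banff algorithm you exhibit no covering pair. The initial seed has none, since every arrow of $u_a\to y_1\overset{2}{\to}z_1\to u_a$ lies in an oriented $3$-cycle.
\item A maximal green sequence is already a reddening sequence. Neither is a known sufficient criterion for $\mathcal A=\mathcal U$ with the prescribed frozen rows $d,h_2,\ldots,h_r$ over $\Z[\vloop^{\pm1}]$.
\end{itemize}
None of these would explain why $r\leq5$ matters when $n\geq3$.

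The missing idea is to prove $\mathcal K_{n,r}\subseteq\mathcal A_{n,r}$ directly inside the Coulomb branch, by placing a generating set in the cluster algebra. That set consists of the dressed minuscule monopoles $M_{k,S,f}$ on $\lambda_{k,S}=\omega_k-\sum_{a\in S}\gamma_a$ with arbitrary Levi dressings, together with the monopoles for $\pm\gamma_a$. These are fed into Lemma~\ref{lem:gen-chamber} and \cite[Prop.~2.3]{SS19}.
\begin{itemize}
\item Determinant dressings for every $S$ come from the twist $\mathsf T$ and the Baxter mutation sequences (Propositions~\ref{prop:gln-translation-stability} and~\ref{prop:gln-mixed-determinant-membership}).
\item General dressings come from convolution identities that reach every dressing in the remainder ideal $J_{k,T}$.
\item The bound $r\leq5$ is exactly what makes $R_k/J_{k,T}$ spanned over $B$ by powers of $e_k$ (Theorem~\ref{thm:four-wall-dressings}).
\end{itemize}
Your closing remark aims only at the $\mathcal P_{\omega_k,\ell}$. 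That covers just $S=\varnothing$ with determinant dressings, which is far from a generating set.

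\paragraph{Two earlier steps also need repair.}
First, the $u_a$-exchange is not the abelian one. The monopole for $-\gamma_a$ is $P_a=\prod_{i=1}^n(1-qh_a/w_i)U_a^{-1}$. Its product with $U_a$ is therefore a degree-$n$ expression in the $GL_n$ weights, not an exchange binomial in the initial cluster. The actual mutation is $\mathsf U_a'=\mathsf D\odot\mathcal P_{\omega_{n-1}-\gamma_a,0}$. Its sequence comes from the Cartier divisor $\{s_a\in tL_0\}$ in $\mathcal R^{\cl}_{\omega_{n-1}}$, with outer terms $\mathsf D\odot\mathsf Z_{n-1}$ and $\mathsf H_a\odot\mathsf Y_{n-1}$ (Theorem~\ref{thm:cat-initial-exchanges}).

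Second, for $K_0\cong\mathcal U$, a starfish argument yields at most $\mathcal U\subseteq K_0$. Even that requires normality and coprimality in this noncommutative ring, which you have not established. The reverse inclusion, Laurentness of every Coulomb-branch class in the initial and once-mutated charts, is the substance of the Schrader--Shapiro theorem (Theorem~\ref{thm:SS-whittaker}). The paper imports that theorem after a quadratic-refinement sign change, so that the initial variables become the classes of the simples (Propositions~\ref{prop:gln-odd-refinement} and~\ref{prop:gln-simple-frame}).
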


\begin{theoremB}
For every $n\geq2$ and $r\geq1$, the initial simple objects determine
an integral compatible quantum seed and an isomorphism
\[
 \mathcal U_{n,r}\xrightarrow{\sim}K_0(\mathcal C_{n,r}).
\]
Every finite mutation sequence is realized in the heart, and every
quantum cluster monomial is represented by a simple convolution
product with the grading shift prescribed by the quantum-torus
multiplication.
See Theorems~\ref{thm:gln-upper-Coulomb} and~\ref{thm:cat-all-mutations}.
\end{theoremB}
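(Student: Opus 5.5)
The plan is to build the isomorphism from a quantum torus embedding and then identify the upper algebra by a codimension-two argument. I then realize mutations through the Kashiwara--Kim--Oh--Park machinery of monoidal seeds in a rigid monoidal category with renormalized $r$-matrices. The inputs are Cautis--Williams: the heart is rigid, of finite length, has renormalized $r$-matrices, and its simples form a positive basis of $K_0$ \cite{CW23}.

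\emph{Step 1: the initial seed commutes.} I first show that the initial objects \eqref{eq:intro-general-A0}, together with the frozen invertible objects, pairwise strongly commute. Each is a line bundle pushed forward from a single orbit closure of a minuscule-type coweight ($\gamma_a$, $\omega_k$). For such a pair, the convolution of supports is birational onto the orbit of the sum of the coweights, so each convolution is simple up to a grading shift. I read off the shifts $\Lambda_{n,r}$ from the $\Lambda$-invariants $\Lambda(M,N)$, which I compute as differences of degrees of the determinant line bundles under the two orders of convolution. The result is a quantum torus $\mathcal T$ generated by the initial classes inside $K_0(\mathcal C_{n,r})$.

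\emph{Step 2: exchange sequences and $\widetilde B_{n,r}$.} For each mutable initial object $M_k$ I produce an exact sequence
\[0\to \text{(monomial)}\to M_k\conv M_k'\to \text{(monomial)}\to 0.\]
Here $M_k'$ is the head of a convolution, namely a simple supported on a neighbouring orbit, for instance $\omega_k\mapsto\omega_{k-1}+\omega_{k+1}-\omega_k$ type relations, or $\gamma_a$ versus $\omega_1$ with the framing. This fixes the exchange matrix, and I check compatibility $\widetilde B^T\Lambda=(\text{diag}\ 2)\,|0$. Applying the KKOP theorem (a quantum monoidal seed admitting first-step mutations is admissible, so every mutation sequence is realized) then gives parts (ii) and (iii) and the final statement of Theorem B. Real simplicity of cluster variables follows in the same way.

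\emph{Step 3: identify $K_0$ with $\mathcal U_{n,r}$.} The inclusion $K_0(\mathcal C_{n,r})\subset\mathcal U_{n,r}$ comes from Step 2: the classical limit $\Z[\vloop^{\pm1}]$-forms agree with the $K$-theoretic Coulomb branch, and that branch is normal. The initial torus and the $2n+r-2$ adjacent tori cover it up to codimension two, which I verify using abelianization/localization to the torus fixed locus, as in BFN. The reverse inclusion uses Laurent positivity plus the fact that $K_0$ is spanned by classes of simples, each of which lies in $\mathcal T$ after localizing at the frozen variables. \textbf{Main obstacle:} this codimension-two covering, which requires controlling the walls of the Coulomb branch for the star quiver, together with the equality $\mathcal A=\mathcal U$ needed for Theorem A. For $\mathcal A=\mathcal U$ I would show that $K_0$ is generated by the classes of the cluster variables in a few explicit seeds. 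This works by a finite-type or acyclicity argument: for $n=2$ the quiver is acyclic after a mutation, giving equality by Berenstein--Fomin--Zelevinsky. For $r\le5$ I would use a case analysis to reach a locally acyclic seed, and that case analysis is why the range is restricted.
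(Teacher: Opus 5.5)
\textbf{The gap is Step 3.} It is the actual content of Theorem B, and the codimension-two covering you flag as the main obstacle is left unproved. Several things are missing there.
\begin{itemize}
\item Steps 1--2 give at most $\mathcal A_{n,r}\subseteq K_0(\mathcal C_{n,r})$. They do not show that an arbitrary class is Laurent in the initial and once-mutated charts.
\item BFN abelianization embeds $K_0(\mathcal C_{n,r})$ into a localized ring of difference operators. That ring is not the quantum torus of your seed, and relating the two is itself a nontrivial transform.
\item Your argument for $\mathcal U_{n,r}\subseteq K_0(\mathcal C_{n,r})$ (positivity, and simple classes lying in $\mathcal T$) only places $K_0$ inside $\mathcal T$ again. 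It says nothing about an arbitrary element of $\mathcal U_{n,r}$, and that direction is exactly where a Hartogs-type extension is needed.
\item A covering of the normal commutative Coulomb branch is a $\vloop=1$ statement. It does not by itself give an equality of $\mathbb Z[\vloop^{\pm1}]$-algebras inside the quantum skew field.
\end{itemize}
The paper does not prove such a covering. It imports its quantum, integral analogue from \cite[Thm.~5.13]{SS19} (Theorem~\ref{thm:SS-whittaker}): the Whittaker transform identifies the intersection of the initial torus with its once-mutated inverse-image charts with the $K$-theoretic Coulomb branch. Using this requires further ingredients you do not have:
\begin{itemize}
\item a quadratic refinement $R_{n,r}$, odd on every mutable direction of every seed, which converts the $(-\vloop)$-torus into the $\vloop$-torus while intertwining the dilogarithm charts (Lemmas~\ref{lem:odd-refinement-mutation} and~\ref{lem:dilog-transport});
\item the Berenstein--Zelevinsky upper bound theorem, which replaces that finite intersection by $\mathcal U_{n,r}$ (Proposition~\ref{prop:v-upper-A-comparison});
\item the comparison \eqref{eq:derived-classical-K-comparison};
\item the localization formulas of Proposition~\ref{prop:gln-simple-frame}, which match the initial variables, signs included, with the classes of the initial simples.
\end{itemize}

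\textbf{Consequences for Steps 1 and 2.} Your Step 2 cannot come before Step 3. The propagation argument, in the homogeneous form of Lemma~\ref{lem:frozen-localized-factorization}, uses three facts: $K_0(\mathcal C_{n,r})$ is a domain, $\mathbb Z[\vloop^{\pm1}]\to K_0(\mathcal C_{n,r})$ is injective, and $\mathbb Q(\vloop)\otimes\mathcal A_{n,r}$ embeds in $\mathbb Q(\vloop)\otimes K_0(\mathcal C_{n,r})$. These are what allow cancelling $[M_y]$ in \eqref{eq:graded-propagation-cancellation} and identifying the head $H$ with the iterated cluster variable. The KKKO theorem \cite[Thm.~7.1.3]{KKKO18} likewise assumes the Grothendieck ring and the cluster algebra agree after extending scalars. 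Even the one-step test, Lemma~\ref{lem:local-mutation-test}, needs the division-ring embedding. In the paper all of this comes from $\mathcal A_{n,r}\subseteq\mathcal U_{n,r}\cong K_0(\mathcal C_{n,r})$ (Theorem~\ref{thm:gln-upper-Coulomb}).

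Step 1 is also unjustified as stated: birationality of the convolved supports does not make a convolution simple. For instance, $\P_{\omega_k,0}\conv\P_{\omega_k,2}$ has supports convolving birationally onto $\Gr_{2\omega_k}$, yet it is the length-two middle term of \eqref{eq:cat-q-second}. Simplicity depends on the dressings. The paper obtains it from reality plus $\vloop$-commutation of classes, via the $r$-matrix criterion. Correspondingly, the mutated objects are not on neighbouring orbits:
\begin{itemize}
\item at $y_k$ and $z_k$ they are the re-dressings $\P_{\omega_k,-1}$ and $\P_{\omega_k,2}$, from the coherent Q-system;
\item at $u_a$ it is $\mathsf D\odot\P_{\omega_{n-1}-\gamma_a,0}$, from a Cartier divisor.
\end{itemize}
Finally, your remarks on $\mathcal A=\mathcal U$ belong to Theorem A, and the acyclicity claim there fails for $n=2$, $r\geq4$ (Remark~\ref{rem:acyclic-small-rank}).
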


The generation assertion $\mathcal A_{n,r}=\mathcal U_{n,r}$ is thus
the remaining problem for $n\geq3$ and $r\geq6$.

For $n=2$, the initial seed is
\begin{equation}
\label{eq:intro-A0}
 \mathsf A_0=
 (\P_{\omega_1,0},\P_{\gamma_1,0},\ldots,\P_{\gamma_r,0},\P_{\omega_1,1}).
\end{equation}
Its principal quiver has one arrow from each $\P_{\gamma_a,0}$ to
$\P_{\omega_1,1}$, a double arrow from $\P_{\omega_1,1}$ to $\P_{\omega_1,0}$,
and one arrow from $\P_{\omega_1,0}$ to each $\P_{\gamma_a,0}$.
When $n=2$ and $r\leq3$, the pattern contains an acyclic seed, so
$\mathcal A_{2,r}=\mathcal U_{2,r}$ also follows from
\cite[Thm.~7.5]{BZ05}. For $r=4$ its principal part has
the four-punctured-sphere mutation type $D_4^{(1,1)}$ and has no
acyclic seed; for $r\geq5$ it is mutation infinite
(Remark \ref{rem:acyclic-small-rank}).

\subsection{Outline of the proof}
\label{sec:intro-outline}

The construction of the seed and its categorical mutations is uniform
in $n$ and $r$. The two ranges in Theorem A arise in the generation
argument.
Section~\ref{sec:group} describes the classical loci over minuscule
and zero-dimensional affine-Grassmannian orbits using lattices. Section~\ref{sec:gln-seed} constructs
an integral compatible pair with
$\widetilde B_{n,r}^{\mathsf T}\Lambda_{n,r}=(2I\;0)$.
The Whittaker transform of \cite[Thm.~5.13]{SS19} identifies the
quantized universally Laurent $X$-algebra with the Coulomb branch.
We adjust the signs in its quantum torus to match the classes of our
initial simple objects. This adjustment is compatible with mutation.
Comparison of the Laurent charts over the integral coefficient ring
then identifies the upper quantum cluster algebra with the
Grothendieck ring (Theorem~\ref{thm:gln-upper-Coulomb}).

Section~\ref{sec:category} constructs the initial exchange sequences
from the coherent Q-system and the sequences associated with Cartier
divisors in \cite[Props.~8.5 and 8.8]{CW23}. The additional summands
of $N_{n,r}$ contribute vector bundles on the classical supports of the
convolution correspondences. Proper flat base change and the projection
formula give the corresponding exchange sequences for this family.
The initial seed therefore admits mutation in every mutable direction.
The mutation criterion of
\cite[Thm.~4.34(ii)]{CW19}, in the homogeneous form verified in
Lemma~\ref{lem:frozen-localized-factorization}, then extends these
exchange sequences along every finite mutation path.

Section~\ref{sec:integral-generation} proves generation by cluster
variables in the two ranges of Theorem A. The required Coulomb-branch
generators are monopole operators twisted by elements of the
representation rings of their stabilizer Levi subgroups; these twists
are called dressings. Tensoring by powers of the relative determinant
line bundle, with the prescribed shifts, and applying the Baxter sequences
of seed mutations of \cite[Sec.~5.3]{SS19} first give the monopole operators
indexed by $\pm\gamma_a$ and all dressings by integer powers of $\det Q_k$
for $\omega_k-\sum_{a\in S}\gamma_a$, where $S\subseteq\{1,\ldots,r\}$.
We then use convolution identities to obtain dressings by arbitrary symmetric
Laurent polynomials in the $k$ weights of $Q_k$.
For $k=2$, multiplication by the monopole operators indexed by $-\gamma_a$
gives the result for arbitrary $r$.

For $k\geq3$, we argue by induction on $k$. Linear combinations of
products of the operators obtained for $k-1$ and $k=1$ give all
dressings in an ideal. The quotient of the dressing ring by this
ideal parametrizes monic degree-$k$ factors of a polynomial with
one linear factor for each $a\in S$. A further convolution identity
replaces this polynomial by the product with one linear factor omitted.
When $r\leq5$, the polynomial to be factored has degree at
most four, so the resulting quotient ring is spanned over the parameter
ring by integer powers of the product of the $k$ weights of $Q_k$.
The corresponding operators already belong to the cluster algebra,
completing the induction.
At $k=n$, the same identities produce all representation characters
of $G_{n,r}$. Together with the preceding dressings, these give all
dressings by elements of Levi representation rings required by the
generation result of \cite[Prop.~2.3 and the discussion following it]{SS19}.
The decomposition of coweights in
Lemma~\ref{lem:gen-chamber} then completes the proof.

\subsection{Related work}

We use the quantum cluster algebras of \cite{BZ05}, upper cluster
algebras of \cite{BFZ05}, and cluster ensembles of \cite{FG09}.
Schrader and Shapiro \cite[Thm.~5.13]{SS19} identify the quantized
universally Laurent $X$-algebra with the Coulomb branch.
We compare their algebra with the integral upper quantum cluster
algebra whose initial variables are the classes of the simple objects
above, and prove generation by cluster variables in the ranges of
Theorem A.

Allegretti, Kim, and Shan \cite[Thm.~1.1 and Sec.~1.2]{AKS25} identify
relative genus-zero skein algebras with quantized Coulomb branches
over complex Laurent parameter rings and deduce categorifications
by Koszul-perverse hearts.  Their gauge data use products of $SL_2$
with a separate flavor torus. They also propose using their methods
to construct monoidal categorifications of surface cluster algebras
\cite[Sec.~1.3.3]{AKS25}.
For the family \eqref{eq:intro-pair}, Theorems A and B identify cluster
monomials with simple objects and realize mutations by short exact sequences.

For triangulable surfaces with at least two punctures,
Li \cite[Cor.~1.2 and Secs.~1.1--1.2]{Li25}
identifies the classical tagged skein algebra with the upper cluster
algebra and constructs a skein quantization.

Monoidal categorification originates with Hernandez and Leclerc
\cite{HL10}.  For symmetric Khovanov--Lauda--Rouquier (KLR) algebras,
Kang, Kashiwara, Kim, and Oh
prove that admissible pairs remain admissible after mutation,
assuming that the associated quantum cluster algebra and the
Grothendieck ring become isomorphic after extending scalars to
$\mathbb Q(q^{1/2})$, in their notation
\cite[Eq.~(7.4), Thm.~7.1.3, and Cor.~7.1.4]{KKKO18}.
The simplicity of heads and socles of tensor products with a real
factor, used in this argument, is proved in \cite{KKKO15}.
Renormalized $R$-matrices go back to
spectral-parameter arguments in level-zero quantum affine theory
\cite{Kas02}.

The argument realizing successive mutations uses the factorization $r$-matrices of
\cite[Thm.~5.10]{CW19} and adapts the proof of
\cite[Thm.~4.34(ii)]{CW19} inside the graded heart.

This article extends and strengthens the results of the author's
thesis~\cite{LiuThesis}.

\subsection{Conventions}

All schemes, groups, and representations are over $\C$; we write
$\O=\C[[t]]$ and $\K=\C((t))$.
The double-cover loop-rotation convention of Section
\ref{sec:loop-rotation-cover} supplies the grading, and $\vloop$
denotes its quantum parameter. The objects associated with $\det A,h_2,\ldots,h_r$ are inverted in the monoidal seed. The symbols
$\omega_k,\gamma_1,\ldots,\gamma_r$ denote chosen coweight lifts unless
brackets indicate their images in the fundamental group.

\section{Preliminaries}
\label{sec:preliminaries}

We recall the derived space of triples, the Koszul-perverse convolution category,
and the structural results of Cautis and Williams \cite{CW23}, in the grading
convention used below. Their construction refines the classical Coulomb branch
of Braverman, Finkelberg, and Nakajima \cite{BFN18}. An
unsubscripted pair $(\G,N)$ denotes a connected reductive group with a
finite-dimensional representation.  Our applications concern the family
$(G_{n,r},N_{n,r})$ of \eqref{eq:intro-pair}.

\subsection{The loop-rotation cover}
\label{sec:loop-rotation-cover}

Let $\C^\times_{\mathrm{geom}}$ be the ordinary loop-rotation torus, acting on
the loop coordinate $t$ with weight one.  Throughout this paper
$\C^\times_{\mathrm{rot}}$ instead denotes its double cover
$\pi_{\mathrm{rot}}\colon\C^\times_{\mathrm{rot}}\to\C^\times_{\mathrm{geom}}$,
$c\mapsto c^2$, and every geometric rotation action is pulled back along
$\pi_{\mathrm{rot}}$; thus $t$ has rotation weight two.  We write
$\chi_{\mathrm{rot}}$ for the defining character of the covering torus, put
$\vloop=\chi_{\mathrm{rot}}^{-1}$, and define $M\{r\}$ by tensoring with
$\chi_{\mathrm{rot}}^{-r}$, so $[M\{r\}]=\vloop^r[M]$.  This is the
double-cover convention used in
\cite[p.~4, paragraph preceding Def.~2.1]{SS19}; we identify the corresponding
double-cover character $q_{\mathrm{SS}}$ with $\chi_{\mathrm{rot}}$.  The
quadratic change of signs in \eqref{eq:gln-rho} of Section
\ref{sec:gln-signs-upper} removes the $-1$ cocycle in that torus convention and
completes the identification; after it $\vloop=q_{\mathrm{SS}}^{-1}$, and
$\vloop=1$ is the ordinary commutative specialization.  Thus
$K_{\C^\times_{\mathrm{rot}}}(\mathrm{pt})=\Z[\chi_{\mathrm{rot}}^{\pm1}]
\cong\Z[q_{\mathrm{SS}}^{\pm1}]$, whereas the defining character $Q$ of
ordinary rotation satisfies $Q\mapsto\chi_{\mathrm{rot}}^2=\vloop^{-2}$.

Pullback from ordinary rotation to the double cover is a finite central change
of equivariance whose kernel $\mu_2$ acts trivially on all spaces.  The
central $\mu_2$-action splits every equivariant coherent category into even
and odd character summands; the even summand descends to ordinary rotation
equivariance, and tensoring by $\chi_{\mathrm{rot}}$ identifies the two.
Hence the coherent categories, their t-structures, convolution functors,
duality, and factorization maps extend character by character, and applying
this decomposition at every finite stage and passing to the defining filtered
colimit identifies the effect on equivariant $K$-groups with the scalar
extension
\begin{equation}
\label{eq:rotation-cover-base-change}
 K^{\C^\times_{\mathrm{geom}}}(X)
 \mathop{\otimes}_{\Z[Q^{\pm1}]}
 \Z[\chi_{\mathrm{rot}}^{\pm1}]
 \xrightarrow{\sim}
 K^{\C^\times_{\mathrm{rot}}}(X).
\end{equation}
Consequently the structural theorems used below remain valid
after this base change.  Definition \ref{def:half-shift-extension} formally
adjoins half-integer grading shifts.

We fix the cohomological normalization for the whole family.
Cautis and Williams use formal degrees $\Z+\tfrac12$ on components with
odd-dimensional Schubert orbits and the fourfold rotation cover for their
bar-invariant simple objects \cite[Sec.~2.1, Rem.~2.4]{CW19}.
For $G_{n,r}\simeq GL_n\times(\C^*)^{r-1}$, define
\[
 c_{n,r}:\pi_1(G_{n,r})\longrightarrow\Z,
 \qquad c_{n,r}([g])=(n-1)\operatorname{val}_t\det A.
\]
The value of this homomorphism has the same parity as the dimension
of the corresponding Schubert orbit: for a dominant
coweight with central coordinates $\lambda_1\geq\cdots\geq\lambda_n$,
\[
 \dim\Gr_\lambda=\sum_{i=1}^n(n+1-2i)\lambda_i
 \equiv(n-1)\sum_i\lambda_i\pmod2.
\]
Write $\langle s\rangle=[s]\{-s\}$ in the formal convention and,
on component $\beta$, define the functor
\begin{equation}
\label{eq:integral-component-transport}
 \mathfrak T_\beta(\mathcal F)
 :=\mathcal F\langle-c_{n,r}(\beta)/2\rangle
 =\mathcal F[-c_{n,r}(\beta)/2]\{c_{n,r}(\beta)/2\}.
\end{equation}
More precisely, on component $\beta$ take the summand of fourfold-cover
equivariance on which the kernel of the map to the double cover acts by
$(-1)^{c_{n,r}(\beta)}$. This summand contains the bar-invariant simple
objects and their integral loop shifts. Formula~\eqref{eq:integral-component-transport}
makes its cohomological degrees integral and its kernel action trivial,
so it descends to coherent complexes with double-cover equivariance.
Both the component index and the kernel character add under convolution.
Consequently the functors $\mathfrak T_\beta$ admit coherent isomorphisms
\[
 \mathfrak T_{\beta+\beta'}(\mathcal F\conv\mathcal G)
 \simeq\mathfrak T_\beta(\mathcal F)\conv
             \mathfrak T_{\beta'}(\mathcal G),
\]
and together define a monoidal equivalence to the integral convention used here.
The normalizations of our initial simple objects are specified in
Section~\ref{sec:gln-signs-upper}.
In particular, $c_{n,r}([\omega_k])=k(n-1)$,
$c_{n,r}([\gamma_1])=n(n-1)$, and
$c_{n,r}([\gamma_a])=0$ for $a\geq2$.
Thus even a component with a zero-dimensional Schubert orbit can acquire
a nontrivial shift. Applying the inverse functors $\mathfrak T_\beta^{-1}$
after adjoining a square root of the loop character recovers the formal
convention above.

We also compare rotation actions by changing the splitting of rotation
from the gauge centre. Schrader and Shapiro use
\[
 (g,c)\cdot s(t)=g\,c\,s(c^2t)
\]
\cite[Rem.~2.4]{SS19}, whereas our rotation has no scalar factor on $N$.
The cocharacter $\eta(c)=(cI_n,1,\ldots,1)$ acts with weight one on every
summand of $N_{n,r}$. Pullback along
\begin{equation}
\label{eq:SS-rotation-centre-comparison}
 \psi(g,c)=(g\eta(c^{-1}),c)
\end{equation}
identifies their action with ours on the entire convolution diagram,
and therefore induces a monoidal equivalence and a convolution-ring
isomorphism. On characters it sends
\[
 q_{\mathrm{SS}}\longmapsto\vloop^{-1},\qquad
 w_i^{\mathrm{SS}}\longmapsto\vloop w_i^{\mathrm{geom}}
       \quad(1\leq i\leq n),\qquad
 h_a^{\mathrm{SS}}\longmapsto h_a\quad(a\geq2).
\]
Here $w_i^{\mathrm{geom}}$ are the $GL_n$ weights and $h_a$ are the
characters of the remaining gauge factors. A character of weight $m$
for $\eta$ therefore acquires the loop factor $\vloop^m$.
Every use of the Whittaker isomorphism below includes pullback along $\psi$.
The symbols $w_i$ in localization formulas denote the pulled-back
characters; in particular $\prod_iw_i=\vloop^n\det A$.

\subsection{The convolution category}
\label{sec:tame-foundations}

Put
\[
 \widehat{\G}_{\O}:=(\G_{\O}\rtimes\C^\times_{\mathrm{rot}})\times\C^\times_{\mathrm{sc}},
 \qquad
 \widehat{\G}_{\K}:=(\G_{\K}\rtimes\C^\times_{\mathrm{rot}})\times\C^\times_{\mathrm{sc}},
\]
where $\C^\times_{\mathrm{sc}}$ acts on $N$ by scalar multiplication, and set
\[
 Y=N_\O/\widehat{\G}_\O,\qquad Z=N_\K/\widehat{\G}_\K,\qquad
 X=Y\mathop{\times}^{\mathbf R}_Z Y.
\]
By \cite[Props.~4.7, 4.8, and 4.11]{CW23},
$X\simeq\R_{\G,N}/\widehat{\G}_\O$ is an admissible, ind-tamely
presented ind-geometric stack, and the two projections $X\to Y$
are ind-proper and almost ind-finitely presented.
We use the coherent pullback and pushforward functors of
\cite{CWInd23,CWTame23}. Their hypotheses for this correspondence,
including weak smoothness of $Y$, are verified in
\cite[Prop.~5.7]{CW23}; the associative, unital convolution structure
is \cite[Prop.~5.5]{CW23}. The change of rotation cover in
Section~\ref{sec:loop-rotation-cover} preserves these constructions.

\subsection{The derived space of triples}

Let $\mathcal W:=\Gr_\G\times N_\K$, an infinite-rank vector bundle over the
affine Grassmannian $\Gr_\G$.  It contains the trivial bundle
$\mathcal V_1:=\Gr_\G\times N_\O$, embedded by
$j_1\colon\mathcal V_1\hookrightarrow\mathcal W$ along
$N_\O\hookrightarrow N_\K$, and the twisted bundle
$\T_{\G,N}:=\G_\K\times_{\G_\O}N_\O$, embedded by
$j_2\colon[g,s]\mapsto([g],gs)$.  The \emph{derived space of triples}
$\R_{\G,N}$ is the derived intersection of these two bundles, the fiber product
in the category of derived ind-schemes \cite[Def.~4.2]{CW23}:
\[
\begin{tikzcd}
\R_{\G, N} \arrow[r, "i_1"] \arrow[d, "i_2"'] & \Gr_\G \times N_\O \arrow[d, "j_1"] \\
\T_{\G, N} \arrow[r, "j_2"'] & \Gr_\G \times N_\K
\end{tikzcd}
\]
Write $\R_{\G,N}^{\mathrm{cl}}:=t_0(\R_{\G,N})_{\mathrm{red}}$ for the reduction
of the classical truncation.  It recovers the set-theoretic definition
\[ \R_{\G, N}^\mathrm{cl} = \{ ([g], s) \in \Gr_\G \times N_\O \mid g^{-1}s \in N_\O \}. \]

The derived structure can be seen on every finite Schubert and lattice
truncation.  There the two bundles are finite-rank subbundles of a finite-rank
quotient of $\mathcal W$, and the derived intersection is the derived zero
locus of $(x,y)\mapsto j_1(x)-j_2(y)$ on
$(\Gr_\G\times N_\O)\oplus\T_{\G,N}$, whose structure sheaf is the
corresponding Koszul complex. At a point $[g]\in\Gr_\G$, the classical
truncation of the fiber is $N_\O\cap gN_\O$, while the negative cohomology
of the Koszul complex records the excess quotient
$N_\K/(N_\O+gN_\O)$. Passing compatibly through the truncations of
\cite[Sec.~4.1 and Prop.~4.7]{CW23} gives $\O_{\R_{\G,N}}$.

\subsection{The Koszul-perverse heart and its grading}
\label{sec:kp-heart}

There are two independent one-dimensional equivariances: the covering torus
$\C^\times_{\mathrm{rot}}$ acts on $t$ with weight two as fixed in Section
\ref{sec:loop-rotation-cover}, while $\C^\times_{\mathrm{sc}}$ acts by scalar
multiplication on $N$.  Recall $\widehat{\G}_{\O}$ from Section
\ref{sec:tame-foundations} and write
$\Coh^{\widehat{\G}_{\O}}(\R_{\G,N}):=\Coh(\R_{\G,N}/\widehat{\G}_{\O})$ for
the stable infinity-category of coherent sheaves on the admissible, ind-tamely presented
quotient stack; for the adjoints needed for rigidity, \cite{CW23} works with
the corresponding equivariant $\operatorname{IndCoh}$ categories.  Let
$\omega_1\colon\Gr_{\G}\to\Gr_{\G}\times N_{\O}$, $[g]\mapsto([g],0)$, and
$\omega_2\colon\Gr_{\G}\to\mathcal T_{\G,N}$, $[g]\mapsto[g,0]$, be the zero
sections.  The construction uses the theory of perverse coherent sheaves
of \cite{AB10}, applied to the affine
Grassmannian in \cite{BFM05,CW19}; the regrading has the same formal origin
as the Koszul regradings of \cite{BGS96}.

For a general pair $(\G,N)$, the Koszul-perverse t-structure is constructed using the
formal cohomological convention recalled in Section
\ref{sec:loop-rotation-cover}.  For the remainder of this section fix
$n\geq2$, $r\geq1$, and put $(\G,N)=(G_{n,r},N_{n,r})$.
Apply the functors $\mathfrak T_\beta$ of \eqref{eq:integral-component-transport}.
For a dominant coweight $\lambda^\vee$, let
$\Gr_{\lambda^\vee}^\circ$ be its $\G_\O$-orbit and
$\R_{\lambda^\vee}^\circ$ the restriction of $\R_{\G,N}$ over this orbit.
Write $\R_{\lambda^\vee}^{\circ,\mathrm{cl}}
:=t_0(\R_{\lambda^\vee}^\circ)_{\mathrm{red}}$.

\begin{theorem}[Cautis--Williams, {\cite[Thm.~4.1]{CW23}}]
\label{thm:CW-heart}
There is a unique t-structure on $\Coh^{\widehat{\G}_{\O}}(\R_{\G,N})$ for
which
\[
 \omega_1^*i_{1*}\colon
 \Coh^{\widehat{\G}_{\O}}(\R_{\G,N})\longrightarrow\Coh^{\widehat{\G}_{\O}}(\Gr_{\G})
\]
is t-exact relative to the regraded perverse coherent t-structure on
$\Gr_{\G}$; equivalently, it is the unique t-structure for which
$\omega_2^*i_{2*}$ is t-exact.  This t-structure is bounded and has a
finite-length heart $\KP_{\G,N}$.

Up to loop and Koszul shifts, its simple objects are $\P_{\lambda^\vee,\mu}$
indexed by dominant pairs: $\lambda^\vee$ is a dominant coweight of $\G$, and
$\mu$ is a dominant weight for the Levi factor of the parabolic attached to
$\lambda^\vee$.  Equivalently, the labels form
$(P^\vee_{\G}\times P_{\G})/W_{\G}$.  The object $\P_{\lambda^\vee,\mu}$ is the
unique simple extension whose restriction to $\R_{\lambda^\vee}^\circ$ is
the pushforward from $\R_{\lambda^\vee}^{\circ,\mathrm{cl}}$ of the shifted
equivariant vector bundle corresponding to $\mu$.
\end{theorem}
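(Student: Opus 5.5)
This statement is quoted from \cite[Thm.~4.1]{CW23}, so the plan is to transport it to the present conventions rather than reprove it. The only new inputs are the change of rotation cover of Section~\ref{sec:loop-rotation-cover} and the component functors $\mathfrak T_\beta$ of \eqref{eq:integral-component-transport}.

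First, I will recall the Cautis--Williams statement in their own setting: the fourfold rotation cover and formal half-integer degrees. There the t-structure is characterized by t-exactness of $\omega_1^*i_{1*}$, or equivalently of $\omega_2^*i_{2*}$, against the regraded perverse coherent t-structure on $\Gr_{\G}$. That result also gives boundedness, finite length of the heart, and the classification of simples by $(P^\vee_{\G}\times P_{\G})/W_{\G}$ as unique simple extensions of shifted equivariant vector bundles on $\R_{\lambda^\vee}^{\circ,\mathrm{cl}}$.

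Second, I will pass to the double cover. Decompose by the central kernel character. Every functor involved commutes with this decomposition: $i_{1*}$, $i_{2*}$, $\omega_1^*$, $\omega_2^*$, and restriction to orbits. The even summand descends, and twisting by $\chi_{\mathrm{rot}}$ identifies the even and odd summands. Hence the t-structure, the exactness property and the uniqueness all restrict summand by summand. Uniqueness in particular holds because a t-structure is determined by its restriction to each orthogonal summand.

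Third, I will apply $\mathfrak T_\beta$ on each component $\beta\in\pi_1(\G)$. The parity of $c_{n,r}(\beta)$ agrees with that of $\dim\Gr_\lambda$, so this functor makes all degrees integral. It is a shift $[\,\cdot\,]\{\,\cdot\,\}$ that is constant on each component. It therefore commutes with $\omega_j^*i_{j*}$, provided the perverse coherent t-structure on $\Gr_{\G}$ is regraded by the same component shift. I will take this matching regrading as the definition of the regraded t-structure in the integral convention. With that choice, the t-exactness characterization and uniqueness are preserved. Boundedness and finite length are invariant under equivalence, and simples go to simples, with only the loop and Koszul shifts changed. This is exactly the ambiguity already allowed in the statement.

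The main obstacle is the bookkeeping in the third step. One must check that the summand of fourfold-cover equivariance on which the kernel acts by $(-1)^{c_{n,r}(\beta)}$ actually contains the Cautis--Williams simple objects on component $\beta$. This is needed so that no label is lost. I would verify it on the restriction to $\R^\circ_{\lambda^\vee}$, where the simple object is an explicit shifted vector bundle, by computing the kernel action from the half-integral shift $\dim\Gr_{\lambda^\vee}/2$ modulo $2$.
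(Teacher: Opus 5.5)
Your proposal is correct and is essentially the paper's argument: the paper also imports \cite[Thm.~4.1]{CW23} through the even/odd character splitting for the rotation cover (Section~\ref{sec:loop-rotation-cover}) and the componentwise functors $\mathfrak T_\beta$, using that $\omega_i^*i_{i*}$ preserves the component index and commutes with componentwise shifts, so the t-exactness characterization, uniqueness, and classification of simples carry over unchanged. The kernel-character check you single out as the main obstacle is the paper's assertion, also in Section~\ref{sec:loop-rotation-cover}, that the $(-1)^{c_{n,r}(\beta)}$-summand contains the bar-invariant simples, which rests on $c_{n,r}(\beta)\equiv\dim\Gr_\lambda\pmod 2$.
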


Here we use \cite[Thm.~4.1]{CW23} after applying the functors
$\mathfrak T_\beta$ of \eqref{eq:integral-component-transport}. The functors $\omega_i^*i_{i*}$
preserve the component index and commute with componentwise shifts,
so the characterizing t-exactness conditions and the classification
of simple objects are unchanged.

The central cocharacter $\eta\colon\C^\times\to G_{n,r}$ given by
$\eta(c)=(cI_n,1,\ldots,1)$ in the coordinates $(A,h_2,\ldots,h_r)$ acts on
$N_{n,r}$ with weight one.  Identifying the scaling action with $\eta$ gives
the graded heart $\KP^\eta_{G_{n,r},N_{n,r}}$, abbreviated
$\mathcal C_{n,r}$. Sections 3.6 and 4.3 of \cite{CW23} decompose the heart
along the character summands of a trivially acting torus: the torus
$H_\eta=\{(\eta(c^{-1}),c):c\in\C^\times\}
\subset\G_{\O}\times\C^\times_{\mathrm{sc}}$
acts trivially on $\R_{\G,N}$, and
\[
 \KP_{\G,N}\simeq
 \bigoplus_{k\in\Z}\KP^\eta_{\G,N},
\]
where the summands are indexed by the $H_\eta$-degree and identified with
the degree-zero summand by powers of the Koszul shift
$\mathcal F\mapsto\mathcal F[1]\otimes\chi_{\mathrm{sc}}^{-1}$, which
lowers that degree by one.  Thus $\KP^\eta_{\G,N}$ has one
residual loop-grading parameter, written $\vloop$.  The homomorphism
$\G_{\O}\times\C^\times_{\mathrm{sc}}\to\G_{\O}$, $(g,c)\mapsto g\eta(c)$, has
kernel $H_\eta$ and is split by $g\mapsto(g,1)$.  Hence the degree-zero
$H_\eta$-character summand is canonically the category with
$\G_{\O}$-equivariance and no separate scaling equivariance:
\begin{equation}
\label{eq:eta-degree-zero-equivalence}
 \Coh^{\widehat{\G}_{\O}}(\R_{\G,N})_{H_\eta\text{-degree }0}
 \simeq
 \Coh^{\G_{\O}\rtimes\C^\times_{\mathrm{rot}}}(\R_{\G,N}).
\end{equation}
Thus $\KP^\eta_{\G,N}$ is the heart in the right-hand category of
(\ref{eq:eta-degree-zero-equivalence}); this is the present specialization of
\cite[Lem.~3.32]{CW23}.  Convolution adds this degree, the monoidal unit has
degree zero, duality negates it, and the factorization construction of the
renormalized $r$-matrices is homogeneous.  Consequently the degree-zero
summand is closed under convolution, both duals, and all $r$-matrices.

\subsection{Convolution as composition of kernels}
\label{sec:convolution-correspondence}

For the self-fiber product $X=Y\times_ZY$ of Section
\ref{sec:tame-foundations}, composition of kernels uses
\[
\begin{tikzcd}[column sep=large]
 &X\times_YX
   \arrow[dl,"\Delta"']\arrow[dr,"m"]&\\
 X\times X&&X ,
\end{tikzcd}
\]
where $\Delta$ remembers the two composable arrows and $m$ composes them.  For
coherent objects $\mathcal F,\mathcal G$, convolution is
\[
 \mathcal F\conv\mathcal G:=m_*\Delta^*(\mathcal F\boxtimes\mathcal G).
\]
\cite[Props.~5.5 and 5.7]{CW23} supply coherent pullback, pushforward,
and associativity. On classical points a representative of the correspondence
is $(g_1,g_2,s)$ with $s\in N_\O\cap g_1N_\O\cap g_1g_2N_\O$, and $m$ forgets
the intermediate modification $g_1$; in the lattice models this is the
passage from a chain $L_0,L_1,L_2$ to its endpoints $L_0,L_2$.

\begin{theorem}[Cautis--Williams, {\cite[Thms.~5.1, 6.1, 7.1]{CW23}}]
\label{thm:CW-monoidal-structure}
Left and right convolution by a Koszul-perverse object are t-exact, and the
monoidal unit is Koszul-perverse.  Hence $\KP^\eta_{\G,N}$ is a finite-length
monoidal abelian category.  It is rigid and carries the canonical homogeneous
system of renormalized $r$-matrices.
\end{theorem}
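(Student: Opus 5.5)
The statement is a transport of \cite[Thms.~5.1, 6.1, 7.1]{CW23} to the graded, integrally normalized heart $\KP^\eta_{\G,N}$. I will deduce it from the ungraded results by checking that each structure survives three modifications: the change of rotation cover, the componentwise functors $\mathfrak T_\beta$, and the passage to the $H_\eta$-degree-zero summand.

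\emph{Step 1: exactness and the unit.} By \cite[Thm.~5.1]{CW23}, left and right convolution by a Koszul-perverse object of $\KP_{\G,N}$, in the formal convention and with fourfold-cover equivariance, is t-exact, and the unit is Koszul-perverse.

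By \eqref{eq:rotation-cover-base-change} and the character-by-character decomposition of Section~\ref{sec:loop-rotation-cover}, convolution and the t-structure split along the central $\mu_2$ characters. Hence exactness persists after restricting to the relevant summands.

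Next, the isomorphisms $\mathfrak T_{\beta+\beta'}(\mathcal F\conv\mathcal G)\simeq\mathfrak T_\beta\mathcal F\conv\mathfrak T_{\beta'}\mathcal G$ show that convolution in the integral convention is conjugate to convolution in the formal one. Each $\mathfrak T_\beta$ is a shift by $\langle -c_{n,r}(\beta)/2\rangle$, and the t-structure of Theorem~\ref{thm:CW-heart} is defined in this shifted convention, so t-exactness transfers. The unit lies on component $0$, where $\mathfrak T_0$ is the identity, so it stays in the heart.

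Finally, convolution adds $H_\eta$-degree and the unit has degree zero. The degree-zero summand \eqref{eq:eta-degree-zero-equivalence} is therefore a monoidal subcategory, and exactness restricts to it. The heart is abelian, and its finite length is part of Theorem~\ref{thm:CW-heart}, restricted to a summand.

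\emph{Step 2: rigidity.} By \cite[Thm.~6.1]{CW23}, every object of $\KP_{\G,N}$ has left and right duals, with evaluation and coevaluation maps. I must check two things:
\begin{itemize}
\item duality is compatible with the $\mathfrak T_\beta$. Duality sends component $\beta$ to $-\beta$, and $c_{n,r}$ is a homomorphism, so the shifts cancel in the evaluation and coevaluation maps.
\item duality preserves $H_\eta$-degree zero. Duality negates the degree, so degree zero is stable.
\end{itemize}
Evaluation and coevaluation are morphisms between objects of degree zero, so they lie in the summand.

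\emph{Step 3: renormalized $r$-matrices.} By \cite[Thm.~7.1]{CW23}, renormalized $r$-matrices are constructed from factorization over a formal parameter and are homogeneous for $H_\eta$ and the torus gradings. Homogeneity ensures that the renormalized leading term of $\mathcal F\conv\mathcal G\to\mathcal G\conv\mathcal F$ is a morphism in the degree-zero summand. It also ensures compatibility with the $\mathfrak T_\beta$, since the source and target lie on the same component $\beta+\beta'$ and so receive the same shift.

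\emph{Main obstacle.} The delicate point is the bookkeeping in Steps 1 and 3. One must verify that the $\mu_2$ kernel characters and the half-integer shifts introduced by $\mathfrak T_\beta$ cancel coherently under convolution and under the factorization construction, so that no extra sign or half-shift obstructs descent to double-cover equivariance. I would handle this first, by checking additivity of both the component index and the kernel character on the convolution correspondence $X\times_YX$.
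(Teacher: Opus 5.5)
Your proposal is correct and follows essentially the paper's route. The paper also takes \cite[Thms.~5.1, 6.1, 7.1]{CW23} as given and transports them along the rotation-cover base change and the monoidal equivalence defined by the $\mathfrak T_\beta$; the coherence check you flag as the main obstacle (additivity of the component index and of the kernel character) is settled once in Section~\ref{sec:loop-rotation-cover}. It then restricts to the $H_\eta$-degree-zero summand, since convolution adds that degree, the unit has degree zero, duality negates it, and the factorization $r$-matrices are homogeneous.
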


The calculation of $H_\eta$-degrees in Section~\ref{sec:kp-heart} shows that these
structures restrict to $\KP^\eta_{\G,N}$.

The Hom spaces in this heart are finite-dimensional. By \cite[Prop.~3.27 and Thm.~4.1]{CW23},
restriction to the open stratum followed by the functor $\omega^*p_*$
of the cited proposition embeds
$\operatorname{End}(S)$, for a simple object $S$, into the endomorphism
ring of its irreducible Levi representation. Schur's lemma therefore gives
$\operatorname{End}(S)=\C$. Finite length then gives
$\dim_{\C}\operatorname{Hom}(A,B)<\infty$ for all objects $A,B$ of the heart,
by induction on their lengths. The unit is the simple object
$\mathbf1=\P_{0,0}$ by \cite[Thms.~4.1 and 5.1]{CW23}.

\subsection{Grothendieck rings}
\label{sec:heart-K-theory}

The bounded t-structure and \eqref{eq:eta-degree-zero-equivalence} give
\begin{equation}
\label{eq:heart-K0}
 K_0\bigl(\KP^\eta_{\G,N}\bigr)
 \xrightarrow{\sim}
 K_0\!\left(\Coh^{\,\G_{\O}\rtimes\C^\times_{\mathrm{rot}}}(\R_{\G,N})\right).
\end{equation}
This is the isomorphism on $K_0$ supplied by the theorem of the heart
\cite[Thm.~6.1]{Barwick15}; the inverse is
$[\mathcal F]\mapsto\sum_i(-1)^i[H^i(\mathcal F)]$.
Convolution is t-exact by \cite[Thm.~5.1]{CW23}, so this is a ring
isomorphism.

We use the identification with classical equivariant $K$-theory in
\cite[Introduction and Cor.~1.3]{CW23}:
\begin{equation}
\label{eq:derived-classical-K-comparison}
 K_0\!\left(
   \Coh^{\G_{\O}\rtimes\C^\times_{\mathrm{rot}}}(\R_{\G,N})\right)
 \simeq
 K^{\G_{\O}\rtimes\C^\times_{\mathrm{rot}}}(\R_{\G,N}^{\mathrm{cl}}).
\end{equation}
The inverse is induced by the closed immersion
$j\colon\R_{\G,N}^{\mathrm{cl}}\hookrightarrow\R_{\G,N}$.
On each finite lattice stage, Postnikov and nilpotent filtrations give
this identification by d\'evissage. These maps commute with the flat
lattice pullbacks and closed pushforwards of
\cite[Eq.~(4.6) and Lem.~4.14]{CW23}, and hence pass to the space of triples.

The multiplication in \eqref{eq:derived-classical-K-comparison} agrees
with the classical Coulomb-branch convolution. In the correspondence
\cite[Prop.~5.9 and Eq.~(5.10)]{CW23}, the class of the derived pullback
on a finite lattice quotient is the alternating sum of the relative
Tor sheaves. This is the refined pullback with support used in the
classical construction \cite[Rem.~3.9(3)]{BFN18}; the proper pushforward
is the same. Both operations commute with the transition maps, so
\eqref{eq:derived-classical-K-comparison} is an isomorphism of convolution
rings. Together with \eqref{eq:heart-K0}, it identifies the quantized
$K$-theoretic Coulomb-branch algebra with $K_0(\KP^\eta_{\G,N})$.

\section{Quantum Seeds and Monoidal Mutation}
\label{sec:quantum-mutation}

We use the quantum-seed definitions of
\cite[Defs.~3.1, 4.1, 4.3, 4.5 and 4.8]{BZ05} and the monoidal
conventions of \cite[Secs.~4.3--4.4]{CW19}. A based quantum-torus
element is $X^c$, a toric frame is $M$, and a quantum seed is
$(M,\widetilde B)$. The parameter of \cite{BZ05} is
$q_{\mathrm{BZ}}=\vloop$.
The conversion from the commutation matrix
$L^{\mathrm{CW}}$ and parameter $q_{\mathrm{CW}}$
of \cite[Sec.~4.4]{CW19} is
\begin{equation}
\label{eq:BZ-CW-source-notation}
 L^{\mathrm{CW}}=-\Lambda,
 \qquad q_{\mathrm{CW}}=\vloop^{-1},
 \qquad
 L^{\mathrm{CW}}\widetilde B=\binom{D}{0}.
\end{equation}
The $X$-seed notation used by Schrader and Shapiro is introduced in Section
\ref{sec:SS-input}.

\subsection{Compatible quantum seeds}

Let $I=I_{\mathrm{ex}}\sqcup I_{\mathrm{fr}}$ be an ordered finite set
with the exchangeable indices first.  Write $e_i$ for the standard basis
of $\Z^I$ and $[a]_+=\max(a,0)$.

\begin{definition}[\cite{BZ05}, Definition~3.1]
An extended exchange matrix is an integer matrix
$\widetilde B=(b_{ik})_{i\in I,\,k\in I_{\mathrm{ex}}}$ whose principal
part is skew-symmetrizable.  In the skew-symmetric case we draw an arrow
$i\to k$ when $b_{ik}>0$; multiple arrows record the value of $b_{ik}$.
Let $\Lambda=(\lambda_{ij})_{i,j\in I}$ be skew-symmetric.  We call
$(\Lambda,\widetilde B)$ a \emph{compatible pair} if
\begin{equation}
\label{eq:BZ-compatibility-convention}
 \widetilde B^{\mathsf T}\Lambda=(D\;0)
\end{equation}
for a diagonal matrix $D$ with positive entries; the zero block corresponds
to the frozen columns.
\end{definition}

Since the form used below is even, the based quantum torus restricts from
$\Z[q_{\mathrm{BZ}}^{\pm1/2}]$ to the $\Z[\vloop^{\pm1}]$-module
$\mathcal T(\Lambda)=\bigoplus_{c\in\Z^I}\Z[\vloop^{\pm1}]X^c$ with
multiplication
\begin{equation}
\label{eq:abstract-quantum-torus}
 X^cX^d=\vloop^{\frac12 c^{\mathsf T}\Lambda d}X^{c+d},
\end{equation}
so that $X^{e_i}X^{e_j}=\vloop^{\lambda_{ij}}X^{e_j}X^{e_i}$.  For a
non-even form one makes the scalar extension to
$\Z[\vloop^{\pm1/2}]$.  Let $\mathcal F(\Lambda)$ be the skew field of
fractions.

\begin{definition}
A toric frame in a skew field $\mathcal F$ is a map
$M\colon\Z^I\to\mathcal F\setminus\{0\}$ whose image
$\{M(c):c\in\Z^I\}$ is the distinguished basis of an isomorphic copy of
$\mathcal T(\Lambda)$, as in \cite[Def.~4.3]{BZ05}.  Thus
\begin{equation}
\label{eq:toric-frame-product}
 M(c)M(d)=\vloop^{\frac12c^{\mathsf T}\Lambda d}M(c+d).
\end{equation}
A quantum seed is the pair $(M,\widetilde B)$, with
$(\Lambda_M,\widetilde B)$ compatible.  Its variables are $X_i=M(e_i)$.
We write $M_0(c)=X^c$ for the canonical initial toric frame, and
$M_\Sigma$ for the toric frame of a seed $\Sigma$.

For $k\in I_{\mathrm{ex}}$, put
$c_k^\pm=-e_k+\sum_{i\in I}[\pm b_{ik}]_+e_i$.  Mutation replaces $X_k$
by
\begin{equation}
\label{eq:quantum-seed-mutation}
 X_k'=M(c_k^+)+M(c_k^-)
\end{equation}
and fixes the other variables.  The matrix mutates by
\begin{equation}
\label{eq:matrix-mutation}
 b'_{ij}=
 \begin{cases}
  -b_{ij},&i=k\text{ or }j=k,\\
  b_{ij}+[b_{ik}]_+[b_{kj}]_+
  -[-b_{ik}]_+[-b_{kj}]_+,&i,j\ne k.
 \end{cases}
\end{equation}
The full mutated toric frame $M'$ is the one in
\cite[Prop.~4.7 and Def.~4.8]{BZ05}; in particular $M'(e_k)=X_k'$, and
its commutation form is compatible with $\widetilde B'$.
\end{definition}

Following \cite[Def.~4.12]{BZ05}, write
$\mathcal A(M_0,\widetilde B)$, or equivalently
$\mathcal A(\Lambda,\widetilde B)$, for the quantum cluster algebra
generated by all cluster variables together with the frozen variables
and their inverses.  This is the integral $\Z[\vloop^{\pm1}]$-form used
throughout the paper.  After the scalar extension
$\Z[\vloop^{\pm1}]\subset\Z[\vloop^{\pm1/2}]$, it identifies with the
ring denoted $\mathcal A^{\mathrm{loc}}_{(L^{\mathrm{CW}},\widetilde B)}$
in \cite[Sec.~4.4]{CW19}, under $q_{\mathrm{CW}}=\vloop^{-1}$ and
$L^{\mathrm{CW}}=-\Lambda$:
\begin{equation}
\label{eq:CW-half-scalar-extension}
 \mathcal A^{\mathrm{loc}}_{(L^{\mathrm{CW}},\widetilde B)}
 \cong
 \Z[\vloop^{\pm1/2}]
 \mathop{\otimes}_{\Z[\vloop^{\pm1}]}
 \mathcal A(\Lambda,\widetilde B).
\end{equation}
Indeed, mutation $\Lambda'=E_k^{\mathsf T}\Lambda E_k$
\cite[Prop.~4.7]{BZ05} preserves evenness, so all mutated frames are
defined over $R=\Z[\vloop^{\pm1}]$.  Since
$S=\Z[\vloop^{\pm1/2}]$ is free of rank two over $R$, adjoining $\vloop^{1/2}$ gives the $S$-algebra on the same mutation variables.
The upper quantum cluster algebra is
\begin{equation}
\label{eq:upper-quantum-cluster-algebra}
 \mathcal U(M_0,\widetilde B)
 =\bigcap_{\Sigma'\sim\Sigma}
   \mathcal T_{\Sigma'}
\end{equation}
inside the common skew field, where $\mathcal T_{\Sigma'}$ is the
Laurent torus of the seed $\Sigma'$.  At $\vloop=1$, formula
(\ref{eq:quantum-seed-mutation}) becomes the ordinary exchange relation
and (\ref{eq:matrix-mutation}) becomes quiver mutation \cite{FZ02}.

\begin{remark}
\label{rem:CW-sign-convention}
Under the substitutions of (\ref{eq:BZ-CW-source-notation}), the based
quantum torus is unchanged, since
$q_{\mathrm{CW}}^{L^{\mathrm{CW}}_{ij}}=\vloop^{\lambda_{ij}}$, and
skew-symmetry of $\Lambda$ turns (\ref{eq:BZ-compatibility-convention})
into the third identity of (\ref{eq:BZ-CW-source-notation}).  Cautis and Williams use
the quiver convention
$b_{ij}=\#(j\to i)-\#(i\to j)$, the opposite of our drawing convention;
all matrix calculations in this paper use (\ref{eq:matrix-mutation}).
\end{remark}

\subsection{Quantum monoidal seeds}

Let $\mathcal C$ be a graded finite-length monoidal abelian category
with renormalized $r$-matrices.  A simple object $M$ is \emph{real} if
$M\conv M$ is simple.  Two objects $M,N$ \emph{$\vloop$-commute} if
their two convolution orders are isomorphic after a grading shift.
Write $M\{r\}$ for the grading shift normalized by
\begin{equation}
\label{eq:grading-shift-convention}
 [M\{r\}]=\vloop^r[M]\qquad\text{in }K_0(\mathcal C).
\end{equation}
\begin{definition}
\label{def:half-shift-extension}
Let $\mathcal C$ be a monoidal abelian category on which the integral
grading shifts form an exact central monoidal action: there are fixed
coherent isomorphisms
$(M\{a\})\conv N\cong(M\conv N)\{a\}\cong M\conv(N\{a\})$ for $a\in\Z$,
compatible with the associator and the addition of shifts.  We require
the induced half-braiding of $T=\mathbf1\{1\}$ to satisfy
$\beta_T=\operatorname{id}_{T\conv T}$, where
$\beta_X\colon T\conv X\xrightarrow{\sim}X\conv T$ is the central
structure.  Define $\mathcal C^{1/2}$ by adjoining formal half-integer grading
shifts as follows.  Its objects are finite direct
sums of pairs $(M,\epsilon)$ with $M\in\mathcal C$ and
$\epsilon\in\{0,1\}$; morphisms are given by
$\operatorname{Hom}_{\mathcal C^{1/2}}((M,\epsilon),(N,\delta))
=\operatorname{Hom}_{\mathcal C}(M,N)$ when $\epsilon=\delta$ and are
zero otherwise.  Write $(M,0)=M$ and $(M,1)=M\{\frac12\}$.  The
grading shift by one half sends $(M,0)$ to $(M,1)$ and $(M,1)$ to $(M\{1\},0)$, while
\[
 (M,\epsilon)\conv(N,\delta)=
 \bigl((M\conv N)\{\lfloor(\epsilon+\delta)/2\rfloor\},
       \epsilon+\delta\!\pmod 2\bigr).
\]
The coherence isomorphisms of the central grading action on $\mathcal C$
supply the associator and unit constraints.
\end{definition}

\begin{lemma}
\label{lem:CW-half-shift-extension}
For $\mathcal C=\KP^\eta_{\G,N}$, the category $\mathcal C^{1/2}$ is a
locally finite, finite-length, rigid monoidal abelian category, and
\begin{equation}
\label{eq:half-shift-K0}
 K_0(\mathcal C^{1/2})
 \cong
 \Z[\vloop^{\pm1/2}]
 \mathop{\otimes}_{\Z[\vloop^{\pm1}]}K_0(\mathcal C).
\end{equation}
Under $q_{\mathrm{CW}}=\vloop^{-1}$ and $L^{\mathrm{CW}}=-\Lambda$, one
has $[\mathbf1\{\tfrac12\}]=\vloop^{1/2}=q_{\mathrm{CW}}^{-1/2}$, the
convention of \cite[Sec.~4.3]{CW19}.  Moreover, every monoidal mutation
constructed from the seed of this paper lies in the full subcategory
of objects $(M,0)$, identified with $\mathcal C$.
\end{lemma}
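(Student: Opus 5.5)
The plan is to verify each assertion directly from Definition~\ref{def:half-shift-extension} and Theorem~\ref{thm:CW-monoidal-structure}. The key observation is that, as an abelian category, $\mathcal C^{1/2}$ is simply $\mathcal C\oplus\mathcal C$: the $\epsilon=0$ and $\epsilon=1$ summands, with no morphisms between them.

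First, the abelian and finiteness properties. Since $\mathcal C\oplus\mathcal C$ is abelian, $\mathcal C^{1/2}$ is abelian. It is of finite length with finite-dimensional Hom spaces, because $\mathcal C$ is (Section~\ref{sec:convolution-correspondence}). Its simple objects are the pairs $(S,\epsilon)$ with $S$ simple in $\mathcal C$.

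Second, the monoidal structure and exactness. Convolution is biexact because it is biexact on $\mathcal C$ (Theorem~\ref{thm:CW-monoidal-structure}), and the grading shift $\{a\}$ is an exact autoequivalence. The associator is built from the associator of $\mathcal C$ together with the coherent isomorphisms moving shifts past factors. Checking the pentagon reduces to the coherence of the central action: the carry $\lfloor(\epsilon+\delta)/2\rfloor$ is a $2$-cocycle $\Z/2\times\Z/2\to\Z$. The only possible obstruction is a sign arising when two shifts are transported past each other. The hypothesis $\beta_T=\mathrm{id}_{T\conv T}$ kills exactly this obstruction, so the extension class of $\Z$ by $\Z/2$ acts by a genuine monoidal action. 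This is the step I expect to require the most care.

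For $\mathcal C=\KP^\eta_{\G,N}$ I would verify the hypothesis as follows. The unit is $\mathbf 1=\P_{0,0}$, so $T=\mathbf1\{1\}$ is $\mathbf1$ tensored with the rotation character $\chi_{\mathrm{rot}}^{-1}$. The shift isomorphisms are multiplication by the character line, which is an even, purely equivariant datum. The swap on $T\conv T$ is therefore the swap on $\chi^{-1}\otimes\chi^{-1}$, a one-dimensional space, and hence the identity. In particular no Koszul sign enters, because $\{1\}$ carries no cohomological shift.

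Third, rigidity. Define duals by $(M,\epsilon)^\vee=(M^\vee\{-\epsilon\},\epsilon)$, and similarly for right duals. Evaluation and coevaluation are those of $\mathcal C$, composed with the shift isomorphisms. The zigzag identities then follow from those in $\mathcal C$ together with the naturality of the central structure.

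Fourth, the Grothendieck ring. As a group, $K_0(\mathcal C^{1/2})=K_0(\mathcal C)\oplus K_0(\mathcal C)[(\mathbf1,1)]$, with $[(\mathbf1,1)]^2=[\mathbf1\{1\}]=\vloop$ by \eqref{eq:grading-shift-convention}. This identifies it with $\Z[\vloop^{\pm1/2}]\otimes_{\Z[\vloop^{\pm1}]}K_0(\mathcal C)$, and the multiplication rule matches that of the tensor product. The identification $\vloop^{1/2}=q_{\mathrm{CW}}^{-1/2}$ follows immediately from $q_{\mathrm{CW}}=\vloop^{-1}$ in \eqref{eq:BZ-CW-source-notation}.

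Finally, the last claim is a parity argument. The initial objects lie in $\epsilon=0$, and the subcategory of such objects is closed under convolution, subquotients and extensions. Since $\Lambda$ is even, every mutated frame $\Lambda'=E_k^{\mathsf T}\Lambda E_k$ is also even, so the quantum-torus shifts $\frac12c^{\mathsf T}\Lambda d$ are integers. Consequently, each exchange sequence and each normalized product lies in the $\epsilon=0$ summand. The argument proceeds by induction along the mutation path.
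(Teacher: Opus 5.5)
Your proposal is correct and follows the paper's proof essentially step by step. The paper proceeds as follows:
\begin{itemize}
\item the underlying abelian category is $\mathcal C\oplus\mathcal C$;
\item monoidality comes from $\beta_T=\mathrm{id}$ for the rotation-character line;
\item rigidity comes from $(M\{\tfrac12\})^R=M^R\{-\tfrac12\}$;
\item the $K_0$ identification comes from adjoining the central class $\vloop^{1/2}$;
\item the final claim follows by induction along the mutation path, using evenness of every mutated $\Lambda$ together with the fact that the $\epsilon=0,1$ summands are Serre subcategories and convolution adds $\epsilon$ modulo two.
\end{itemize}
The paper simply asserts $\beta_T=\mathrm{id}$, and your justification via the swap on a one-dimensional even character line is a reasonable filling-in of that step. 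One small correction: in general the obstruction is an arbitrary scalar automorphism of $T\conv T$, not only a sign.
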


\begin{proof}
For the grading line $T=\chi_{\mathrm{rot}}^{-1}$, the half-braiding
$\beta_T$ is the identity. The construction is therefore monoidal, and its
underlying abelian category is $\mathcal C\oplus\mathcal C$.
Finite length and local finiteness follow from
Theorem~\ref{thm:CW-monoidal-structure} and
the local-finiteness argument in Section~\ref{sec:convolution-correspondence};
rigidity follows from
$(M\{\tfrac12\})^R=M^R\{-\tfrac12\}$ and the analogous left-dual
formula. Adjoining the central class $\vloop^{1/2}$ with square
$\vloop$ gives \eqref{eq:half-shift-K0}.

By \cite[Prop.~4.7]{BZ05}, matrix mutation preserves evenness of
$\Lambda$. Hence the grading shifts in normalized monomials, as well as
the exchange shifts $s_k^\pm$ and $d_k/2$, are integral. The full subcategories
with $\epsilon=0$ and $\epsilon=1$ are Serre subcategories, and convolution
adds $\epsilon$ modulo two. If the existing seed objects have $\epsilon=0$,
both outer terms of an exchange sequence have $\epsilon=0$, so its middle
term and its new simple factor do as well.
Induction proves the assertion.
\end{proof}

\begin{definition}
\label{def:homogeneous-r-matrices}
In a category with a central grading action, a homogeneous system of
renormalized $r$-matrices consists of integers $\Lambda(M,N)$ and
nonzero degree-zero morphisms
$r_{M,N}\colon M\conv N\to N\conv M\{\Lambda(M,N)\}$ for nonzero $M,N$,
satisfying \cite[Def.~4.1]{CW19} with each target shifted by its
$r$-matrix degree, as in \cite[Def.~4.20]{CW19} and
\cite[Thm.~7.1]{CW23}. Thus composites are compared after transporting
their shifts through convolution: equality of degrees gives the
stated identity, and the strict inequalities in axioms (iii) and (v)
give zero composites. In axiom (iv) the composite has target shift
$\Lambda(M,N)+\Lambda(N,M)$ and is nonzero exactly when this sum is
zero.
\end{definition}

Suppose $\{M_i\}_{i\in I}$ are pairwise $\vloop$-commuting real simples
whose classes satisfy $[M_i][M_j]=\vloop^{\lambda_{ij}}[M_j][M_i]$.  In
the localized setting, assume in addition that every frozen object
$M_f$, $f\in I_{\mathrm{fr}}$, is invertible.  For
$a=(a_i)\in\Z_{\geq0}^{I_{\mathrm{ex}}}\times\Z^{I_{\mathrm{fr}}}$, form
an ordered list $N_1,\ldots,N_s$ containing $a_i$ copies of $M_i$ when
$a_i\geq0$, and $-a_f$ copies of $M_f^{-1}$ when a frozen exponent $a_f$
is negative.  Following the notation of
\cite[Sec.~4.4, paragraph preceding Eq.~(4.33)]{CW19}, abbreviate the
normalized object by
\begin{equation}
\label{eq:normalized-categorical-monomial}
 \mathbf M(a)=N_1\odot\cdots\odot N_s,
\end{equation}
with the empty product equal to the monoidal unit.  Explicitly, in
the fixed index order,
\[
 \mathbf M(a)=
 \bigl(M_1^{*a_1}*\cdots*M_{|I|}^{*a_{|I|}}\bigr)
 \left\{-\frac12\sum_{i<j}a_i a_j\lambda_{ij}\right\},
\]
where negative powers mean convolution powers of the inverse frozen
object.  Thus $\odot$ denotes an ordinary convolution with the grading shift
prescribed by \eqref{eq:toric-frame-product}.
Its class is the image of $M_\Sigma(a)$ under
$M_\Sigma(e_i)\mapsto[M_i]$.  The $r$-matrix relations identify the
objects obtained from different orders.
For a nonlocalized seed, one instead restricts all exponents to be
nonnegative.  The middle term of a mutation, whose factors need not
$\vloop$-commute, is always written as an ordinary convolution with its
grading shift displayed.

\begin{definition}
\label{def:quantum-monoidal-seed}
The collection $\{M_i\}_{i\in I}$ is a \emph{quantum monoidal cluster}
if the map from the localized quantum affine space defined by
$X^{e_i}\mapsto[M_i]$ and $X^{-e_f}\mapsto[M_f^{-1}]$ is injective.
When grading shifts act freely on simple isomorphism classes, this is
equivalent to the objects $\mathbf M(a)$ being pairwise nonisomorphic
even up to grading shift.  Together with an exchange matrix
$\widetilde B$ compatible with $\Lambda$, it is a \emph{quantum monoidal seed}.

For $k\in I_{\mathrm{ex}}$, set
$b_k^\pm=\sum_i[\pm b_{ik}]_+e_i$.  If $d_k$ is the $k$th diagonal entry
of $D$ in (\ref{eq:BZ-compatibility-convention}), also put
\begin{equation}
\label{eq:mutation-shift-exponents}
 s_k^\pm=\frac12 e_k^{\mathsf T}\Lambda b_k^\pm,
\end{equation}
so that compatibility gives
$s_k^+-s_k^-=\frac12e_k^{\mathsf T}\Lambda\widetilde B e_k=-d_k/2$.
A simple object $M_k'$ is a monoidal mutation of $M_k$ if replacing
$M_k$ by $M_k'$ gives the mutated quantum monoidal cluster and there is
a short exact sequence
\begin{equation}
\label{eq:abstract-monoidal-mutation}
 0\longrightarrow \mathbf M(b_k^+)\{-d_k/2\}
 \longrightarrow (M_k\conv M_k')\{-s_k^-\}
 \longrightarrow \mathbf M(b_k^-)
 \longrightarrow0.
\end{equation}
The shifts follow by multiplying (\ref{eq:quantum-seed-mutation}) on
the left by $X_k=M_\Sigma(e_k)$ and using (\ref{eq:toric-frame-product}):
\begin{equation}
\label{eq:shifted-quantum-exchange}
\begin{aligned}
 X_kX_k'
 &=\vloop^{s_k^+}M_\Sigma(b_k^+)
   +\vloop^{s_k^-}M_\Sigma(b_k^-),\\
 \vloop^{-s_k^-}X_kX_k'
 &=\vloop^{-d_k/2}M_\Sigma(b_k^+)+M_\Sigma(b_k^-).
\end{aligned}
\end{equation}
Thus the Grothendieck relation of (\ref{eq:abstract-monoidal-mutation})
is exactly (\ref{eq:shifted-quantum-exchange}).  The reverse-order sequence supplied
by the $r$-matrix formalism \cite[Rem.~4.29]{CW19} is, in the same
conventions,
\begin{equation}
\label{eq:abstract-reverse-monoidal-mutation}
 0\longrightarrow \mathbf M(b_k^-)\{-d_k/2\}
 \longrightarrow (M_k'\conv M_k)\{s_k^+\}
 \longrightarrow \mathbf M(b_k^+)
 \longrightarrow0.
\end{equation}
Indeed, right multiplication of (\ref{eq:quantum-seed-mutation}) by
$X_k$ gives its Grothendieck relation
$\vloop^{s_k^+}X_k'X_k=M_\Sigma(b_k^+)
+\vloop^{-d_k/2}M_\Sigma(b_k^-)$.
\end{definition}

In the seed of this paper $D=2I_{2n+r-2}$, so the subobject in
(\ref{eq:abstract-monoidal-mutation}) is $\mathbf M(b_k^+)\{-1\}$.

\begin{definition}
\label{def:monoidal-categorification}
The graded finite-length monoidal abelian category $\mathcal C$ is a
\emph{monoidal categorification} of $\mathcal A(\Lambda,\widetilde B)$
if there is an isomorphism of $\Z[\vloop^{\pm1}]$-algebras
$\mathcal A(\Lambda,\widetilde B)\xrightarrow{\sim}K_0(\mathcal C)$
carrying every quantum cluster monomial to the class of a simple object.
This is the integral form of \cite[Def.~4.26]{CW19}; for an even
compatible pair, Definition \ref{def:half-shift-extension} and
(\ref{eq:CW-half-scalar-extension}) give that definition over
$\Z[\vloop^{\pm1/2}]$.
\end{definition}

\begin{lemma}
\label{lem:homogeneous-reality-criterion}
Let $\mathcal C$ be a $\C$-linear locally finite, finite-length graded
monoidal abelian category with biexact convolution, separable triple
products (see \cite[Def.~4.8 and Prop.~4.9]{CW19}), and a homogeneous
system as in Definition \ref{def:homogeneous-r-matrices}.  Suppose
$K_0(\mathcal C)$ is a domain and the coefficient map
$\Z[\vloop^{\pm1}]\to K_0(\mathcal C)$ is injective.  If
$X,Y,M,N$ are simple, there is an exact sequence
$0\to X\to M\conv N\to Y\to0$, and $X\conv N$ and $Y\conv N$ are simple
and not isomorphic even up to grading shift, then $N$ is real.
\end{lemma}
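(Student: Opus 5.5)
The plan is to argue by bounding the length of $N\conv N$ through convolution with $M$, and then to rule out the one remaining non-simple case using the renormalized $r$-matrix $r_{N,N}$.

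\emph{Step 1: an exact sequence of length two.} Convolve the given sequence on the right with $N$. Biexactness gives
\[
 0\to X\conv N\to M\conv N\conv N\to Y\conv N\to 0 .
\]
By hypothesis the outer terms are simple, so $M\conv N\conv N$ has length exactly two. Its two composition factors are not isomorphic even up to grading shift.

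\emph{Step 2: $N\conv N$ has length at most two.} Take a composition series of $N\conv N$ with simple subquotients $S_1,\dots,S_m$. Apply $M\conv(-)$, which is exact. This gives a filtration of $M\conv N\conv N$ with subquotients $M\conv S_i$. Each $M\conv S_i$ is nonzero. To see this, note that $K_0(\mathcal C)$ is a domain and $[M],[S_i]\neq0$, since classes of simple objects are nonzero and linearly independent in a finite-length category. Hence $m\leq 2$. If $m=1$ we are done. So assume $m=2$. Then, after reordering, $M\conv S_1\cong X\conv N$ and $M\conv S_2\cong Y\conv N$; in particular each $M\conv S_i$ is simple. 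Moreover, $S_1\not\cong S_2\{c\}$ for every $c$. Indeed, otherwise $X\conv N\cong (Y\conv N)\{\pm c\}$, which contradicts the hypothesis.

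\emph{Step 3: contradiction via $r_{N,N}$.} By Definition~\ref{def:homogeneous-r-matrices} and the head/socle results of \cite[Sec.~4]{CW19}, $r_{N,N}\colon N\conv N\to N\conv N\{\Lambda(N,N)\}$ is a nonzero morphism. Separability of triple products controls its image. I expect to show that, if $N\conv N$ is not simple, $r_{N,N}$ is neither zero nor an isomorphism. Since $S_1,S_2$ are non-isomorphic up to shift and $N\conv N$ has length two, this forces the image of $r_{N,N}$ to be a single factor, namely the head $S_2$ landing on the socle $S_1\{\Lambda\}$. That is impossible unless $S_2\cong S_1\{\Lambda(N,N)\}$, which is excluded by Step 2. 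The remaining case is that $r_{N,N}$ is an isomorphism. This would make $N\conv N$ a direct sum of its two factors or force it to be simple. To exclude the direct-sum case, use $\operatorname{End}(S)=\C$ and the axiom that composites of $r$-matrices are nonzero exactly when their degrees add to zero (axiom (iv) with $M=N$). This makes $\operatorname{End}(N\conv N)$ behave like that of a module with simple head, so $N\conv N$ is indecomposable. In the same step, injectivity of $\Z[\vloop^{\pm1}]\to K_0(\mathcal C)$ is used to compare grading shifts. Equating classes $[S_2]=\vloop^{c}[S_1]$ obtained from the image of $r_{N,N}$ pins down $c=\Lambda(N,N)$ unambiguously, and the "up to shift" hypothesis then yields the contradiction.

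\emph{Main obstacle.} Step 3 is the hard part: producing enough structure on $r_{N,N}$, beyond nonvanishing, from the homogeneous axioms alone. If the head/socle argument is awkward, I would instead apply naturality of $r_{-,N}$ to the inclusion $X\to M\conv N$. This gives a morphism $X\conv N\conv N\to N\conv X\conv N$ compatible with $\operatorname{id}_M\conv r_{N,N}$, and it shows that $\operatorname{id}_M\conv r_{N,N}$ preserves the filtration from Step 1. Since that filtration has non-isomorphic simple factors, the map must be a scalar on each factor. Nonvanishing of the correctly-degreed composites then forces $\Lambda(N,N)=0$ and $r_{N,N}$ invertible. Comparing with the two-step filtration by $M\conv S_i$ gives a splitting, which I expect contradicts simplicity of the head $N\conv N\twoheadrightarrow S_2$ guaranteed in \cite[Prop.~4.9]{CW19}.
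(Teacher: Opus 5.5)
You have a genuine gap in Step 3. Steps 1 and 2 are fine. In particular, your Jordan--H\"older observation that $S_1\not\cong S_2\{c\}$ for every $c$ is correct, and it reaches the final contradiction slightly faster than the paper does. The problem is that every branch of your $r$-matrix analysis assumes $N\conv N$ is a \emph{non-split} extension, with a unique simple subobject and a unique simple quotient. You use this when you say the image is ``the head $S_2$ landing on the socle $S_1\{\Lambda\}$'', but you never prove it, and it does not follow from the axioms. \cite[Prop.~4.9]{CW19} supplies separability of triple products, not a simple head of $N\conv N$. Axiom (iv) says nothing about indecomposability. If $N\conv N\cong S_1\oplus S_2$, a degree-zero $r_{N,N}$ could act by two distinct nonzero scalars on the summands, as a braiding does on $\tau\otimes\tau\cong\mathbf 1\oplus\tau$ in the Fibonacci category. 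It could also have image $S_1\to S_1$. Neither case contradicts anything you use.

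Several other steps also fail. Your claim that an invertible $r_{N,N}$ makes $N\conv N$ split or simple is unjustified, since the identity of a non-split extension is invertible. The fallback via $\mathrm{id}_M\conv r_{N,N}$ does not help: that map is not an endomorphism unless $\Lambda(N,N)=0$, and nothing makes it scalar on the factors of the filtration. Finally, ``after reordering'' in Step 2 discards exactly the information that matters, namely which of $M\conv S_1$, $M\conv S_2$ is the subobject $X\conv N$.

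The missing idea is to apply separability to the given data, which is what the paper does. Let $S\subset N\conv N$ be the simple subobject and $S'$ the quotient.

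\begin{enumerate}
\item Suppose $M\conv S$ and $X\conv N$ coincided inside $M\conv N\conv N$. Separability applied to $X\subset M\conv N$ and $S\subset N\conv N$ gives $0\neq N'\subseteq N$ with $N'\conv N\subseteq S$. Simplicity of $N$ then forces $S=N\conv N$, a contradiction.
\item Hence $M\conv S$ and $X\conv N$ are distinct simple subobjects of the length-two object, so $M\conv S\cong Y\conv N$ and $M\conv S'\cong X\conv N$.
\item If the extension split, $S'$ would be a second simple subobject. The same argument would give $M\conv S'\cong Y\conv N$, identifying $X\conv N$ with $Y\conv N$, which contradicts the hypothesis. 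So $S$ is the unique simple subobject and $S'$ the unique simple quotient.
\end{enumerate}

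With uniqueness of head and socle established, the $r$-matrix step works as follows, with $d=\Lambda(N,N)$.
\begin{itemize}
\item If the image of $r_{N,N}$ has length one, then $S'\cong S\{d\}$.
\item If $r_{N,N}$ is an isomorphism, the $K_0$ hypotheses force $d=0$. Then either $r_{N,N}$ is a scalar, and \cite[Prop.~4.12]{CW19} makes $N\conv N$ simple; or local finiteness gives an eigenvalue $c$ for which $r_{N,N}-c\,\mathrm{id}$ has nonzero proper image, whence $S'\cong S$.
\end{itemize}
Your Step 2 observation then finishes the proof.
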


\begin{proof}
We follow \cite[Prop.~4.16]{CW19}, allowing $r_{N,N}$ to have
nonzero degree. Suppose
$N\conv N$ is not simple.  Tensoring the given exact sequence by $N$
shows that $M\conv N\conv N$ has length two.  A composition series of
$N\conv N$ remains a filtration after tensoring by $M$, with nonzero
subquotients: finite length makes every nonzero object class nonzero,
and the domain hypothesis makes the product of two such classes nonzero.
Hence $N\conv N$ also has length two; write a composition series
$0\to S\to N\conv N\to S'\to0$.

Both $M\conv S$ and $M\conv S'$ are simple, since their lengths add to
two.  If $X\conv N\subseteq M\conv S$, separability applied to
$X\subset M\conv N$ and $S\subset N\conv N$ gives a nonzero subobject
$N'\subset N$ with $N'\conv N\subseteq S$.  Simplicity of $N$ forces
$N'=N$, contradicting $S\subsetneq N\conv N$.  Thus $M\conv S$ and
$X\conv N$ are distinct simple subobjects of the length-two object
$M\conv N\conv N$, and projection to the respective quotients gives
\[
 M\conv S\cong Y\conv N,
 \qquad
 M\conv S'\cong X\conv N.
\]
The extension of $S'$ by $S$ cannot split: applying the first isomorphism with $S'$ as
a second simple subobject would identify $X\conv N$ with $Y\conv N$ up
to a shift.  Hence $S$ is the unique simple subobject and $S'$ the
unique simple quotient of $N\conv N$.

Put $d=\Lambda(N,N)$ and consider
$r_{N,N}\colon N\conv N\to(N\conv N)\{d\}$.  If its image has length
one, uniqueness of the simple quotient of the source and the simple
subobject of the target gives $S'\cong S\{d\}$.  If it has length two,
it is an isomorphism; equality of classes gives
$(\vloop^d-1)[N]^2=0$, so $d=0$.  If this degree-zero endomorphism were
a scalar, \cite[Prop.~4.12]{CW19} would already make $N\conv N$ simple.
Otherwise, local finiteness and algebraic closedness provide an
eigenvalue $c$ for which $r_{N,N}-c\,\mathrm{id}$ has nonzero proper
image, and uniqueness gives $S'\cong S$.  In either case
$S'\cong S\{e\}$ for some integer $e$, and the two displayed
isomorphisms imply $X\conv N\cong(Y\conv N)\{e\}$, the required
contradiction.  Thus $N\conv N$ is simple.
\end{proof}

\begin{lemma}
\label{lem:local-mutation-test}
Let $\mathcal C$ be a $\C$-linear locally finite, finite-length rigid
graded monoidal abelian category.  Assume convolution is biexact,
$\operatorname{End}(\mathbf1)=\C$, and $\mathcal C$ carries a compatible
homogeneous system of renormalized $r$-matrices.  Let $\{M_i\}_{i\in I}$
be a quantum monoidal seed such that $K_0(\mathcal C)$ embeds as a
$\Z[\vloop^{\pm1}]$-subalgebra of a division ring in which its toric
frame and the mutation in direction $k$ are defined.  Suppose the $k$th
exchange column is nonzero.  If a simple object $M_k'$ occurs in a short
exact sequence of the form (\ref{eq:abstract-monoidal-mutation}), with
the displayed grading shifts, then replacing $M_k$ by $M_k'$ gives the
mutated quantum monoidal seed.  The reverse-order exchange sequence is
also exact.
\end{lemma}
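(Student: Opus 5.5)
The plan is to reduce everything to Grothendieck-ring identities inside the ambient division ring, and then use the homogeneous $r$-matrix formalism of Definition~\ref{def:homogeneous-r-matrices} to upgrade these identities to the categorical statements. This follows the route of \cite[Thm.~4.34(ii)]{CW19}, which I assume rather than reprove. Write $\Sigma$ for the given seed and $\Sigma'=\mu_k\Sigma$ for the mutated quantum seed of \cite[Prop.~4.7 and Def.~4.8]{BZ05}.

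\emph{Step 1: the class of $M_k'$.} The short exact sequence (\ref{eq:abstract-monoidal-mutation}) gives
\[
 \vloop^{-s_k^-}[M_k][M_k']=\vloop^{-d_k/2}M_\Sigma(b_k^+)+M_\Sigma(b_k^-).
\]
Since $[M_k]=M_\Sigma(e_k)$ is invertible in the division ring, comparison with (\ref{eq:shifted-quantum-exchange}) yields $[M_k']=X_k'=M_{\Sigma'}(e_k)$. The classes $[M_i]$ for $i\ne k$ are the remaining variables of $\Sigma'$. Hence the new family has exactly the cluster classes of $\Sigma'$. The assignment $X^{e_i}\mapsto[M_i']$ then factors through the toric frame $M_{\Sigma'}$, whose image is a based quantum torus. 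So the map from the localized quantum affine space is injective, which is the cluster condition in Definition~\ref{def:quantum-monoidal-seed}. Compatibility of $(\Lambda_{\Sigma'},\widetilde B')$ is \cite[Prop.~4.7]{BZ05}.

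\emph{Step 2: $\vloop$-commutation of $M_k'$ with $M_i$, $i\ne k$.} Fix $i\ne k$. The real simple $M_i$ $\vloop$-commutes with $M_k$ and with both outer monomials $\mathbf M(b_k^\pm)$. For a real simple object, commutation with $N$ is detected by $\Lambda(M_i,N)+\Lambda(N,M_i)=0$, by axiom (iv). The degrees $\Lambda(M_i,-)$ are additive on convolution products with a real factor, and the two outer terms of the sequence have equal $\Lambda(M_i,-)$ up to the common degree dictated by $\lambda_{ik}'$. Applying the $r$-matrix $r_{M_i,-}$ to the sequence should therefore force $\Lambda(M_i,M_k')+\Lambda(M_k',M_i)=0$. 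The resulting shift is read off from the classes, as $[M_i][M_k']=\vloop^{\lambda'_{ik}}[M_k'][M_i]$, using that $K_0(\mathcal C)$ is a domain.

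\emph{Step 3: reality of $M_k'$.} I will apply Lemma~\ref{lem:homogeneous-reality-criterion} with $M=M_k$, $N=M_k'$, and with $X,Y$ the shifted monomials $\mathbf M(b_k^+)\{-d_k/2\}$ and $\mathbf M(b_k^-)$. By Step 2 and reality of the factors, $X\conv M_k'$ and $Y\conv M_k'$ are convolutions of pairwise $\vloop$-commuting real simples, hence simple. Their classes differ by Step 1 and injectivity of the frame: the column $b_k$ is nonzero, so $b_k^+\ne b_k^-$. Therefore they are not isomorphic up to shift.

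\emph{Step 4: the reverse sequence.} The socle $\mathbf M(b_k^+)\{-d_k/2\}$ and head $\mathbf M(b_k^-)$ of $M_k\conv M_k'$ are nonisomorphic simples. Hence $r_{M_k,M_k'}$ is neither zero nor an isomorphism, and its image is the head. Composing with $r_{M_k',M_k}$, as in \cite[Rem.~4.29]{CW19}, identifies the socle and head of $M_k'\conv M_k$ in reverse order. This gives (\ref{eq:abstract-reverse-monoidal-mutation}), with the shift $s_k^+$ fixed by its Grothendieck relation.

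I expect Step 2 to be the main obstacle. The additivity of $\Lambda$ across a non-split extension requires care: exactness alone does not give it. The remedy I would try first is the argument of \cite[Lem.~4.32]{CW19}. One composes $r_{M_i,M_k}\conv M_k'$ with $M_k\conv r_{M_i,M_k'}$ and uses separability of triple products to show that the composite restricts nontrivially to the socle. Comparing degrees then pins down $\Lambda(M_i,M_k')$.
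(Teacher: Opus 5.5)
Your outline matches the paper's proof in three of its four steps. You identify $[M_k']$ with the mutated variable from the Grothendieck relation, you get reality of $M_k'$ from Lemma~\ref{lem:homogeneous-reality-criterion} applied to the exchange sequence, and you take the reverse sequence from \cite[Rem.~4.29]{CW19}. The gap is Step~2, and Step~3 depends on it.

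You never establish $\Lambda(M_i,M_k')+\Lambda(M_k',M_i)=0$, and the mechanism you sketch does not produce it. Naturality of $r_{M_i,-}$ on the nonsplit extension $0\to X\to M_k\conv M_k'\to Y\to0$ does not pin down the degree on the middle term from the common degree of the outer terms. The renormalizing order on the middle term can drop, so you get only a one-sided bound. Combining such bounds for $r_{M_i,-}$ and $r_{-,M_i}$ reproduces only the automatic inequality $\Lambda(M_i,M_k')+\Lambda(M_k',M_i)\geq0$. (You also mention only $r_{M_i,-}$, although the sum needs control of $\Lambda(M_k',M_i)$ too.) The appeal to \cite[Lem.~4.32]{CW19} is a hope rather than an argument.

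The missing idea is that you already have what you need at the level of classes. Your Step~1 shows that the new classes form the mutated toric frame, so $[M_i][M_k']=\vloop^{\lambda'_{ik}}[M_k'][M_i]$ in $K_0(\mathcal C)$. Since $M_i$ is real simple, \cite[Thm.~4.23 and Prop.~4.14]{CW19} upgrade this class-level $\vloop$-commutation directly to simplicity of $M_i\conv M_k'$ and an isomorphism with the opposite product up to shift. This is exactly what the paper does, and it needs no degree bookkeeping across the extension.

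For those graded results to apply, the paper first uses the division-ring embedding. It makes $K_0(\mathcal C)$ a domain with injective coefficient map, so no simple is isomorphic to a nontrivial shift of itself. Hence $r_{T,T}$ has degree zero and is a scalar for every real simple $T$. This is what lets the cited results of \cite{CW19} run with shifted targets; you assumed this transfer without checking it.

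Two smaller corrections to Step~3:
\begin{itemize}
\item $M_k'$ is not yet known to be real, so simplicity of $X\conv M_k'$ and $Y\conv M_k'$ must come from reality of the monomials $X,Y$ (\cite[Cor.~4.15]{CW19}) together with their commutation with $M_k'$. It does not follow from their being ``convolutions of real simples.''
\item You should record the hypotheses of Lemma~\ref{lem:homogeneous-reality-criterion}: the domain and coefficient conditions above, and separable triple products, which come from rigidity via \cite[Prop.~4.9]{CW19}.
\end{itemize}
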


\begin{proof}
We use \cite[Prop.~4.30]{CW19}, checking its homogeneous form.
Rigidity supplies separable triple products by
\cite[Prop.~4.9]{CW19}. The division-ring embedding makes $K_0(\mathcal C)$ a domain
and the coefficient map $\Z[\vloop^{\pm1}]\to K_0(\mathcal C)$ injective,
so no simple is isomorphic to a nontrivial
shift of itself. For a real simple $T$, the nonzero map $r_{T,T}$
therefore has degree zero and is scalar. The proofs of
\cite[Props.~4.12, 4.14, 4.17--4.18, Cor.~4.15 and Thm.~4.23]{CW19}
now apply with targets shifted as in
Definition~\ref{def:homogeneous-r-matrices}: their kernels, images,
heads and socles are taken in $\mathcal C$, and every comparison
map has degree zero.

The exchange relation identifies $[M'_k]$ with the mutated variable.
By \cite[Thm.~4.23 and Prop.~4.14]{CW19}, it $\vloop$-commutes
with every unchanged seed object and their convolutions are simple.
The objects $\mathbf M(b_k^+)$ and $\mathbf M(b_k^-)$ are real simple by
\cite[Cor.~4.15]{CW19}; their products with $M'_k$ are simple by
\cite[Thm.~4.23 and Prop.~4.14]{CW19}. They remain distinct up to
shift after multiplication by $[M'_k]$, by cancellation in the
ambient division ring and the nonzero exchange column. Thus
Lemma~\ref{lem:homogeneous-reality-criterion} proves that $M'_k$ is real,
as required in \cite[Prop.~4.30]{CW19}. Simplicity of the new monomials
follows from \cite[Cor.~4.15]{CW19}, their independence from
\cite[Prop.~4.7]{BZ05}, and the reverse sequence from
\cite[Rem.~4.29]{CW19} with the shifts of
\eqref{eq:abstract-reverse-monoidal-mutation}.
\end{proof}

Frozen localization is as in \cite[Cor.~4.35]{CW19}.
The exchange exponents $b_k^\pm$ are nonnegative, and convolution
with an invertible frozen object is exact, so arbitrary frozen
powers preserve the preceding conclusions.

\subsection{Quadratic refinements and Laurent charts}
\label{sec:signed-frame}

Let $\mathbb L$ be a finite-rank lattice with an integral skew form
$(\ ,\ )$, and let $\mathbf e=(e_i)_{i\in I_{\mathrm{ex}}}$ be a
basis of a primitive mutable sublattice, as in \cite[Sec.~3.1]{SS19}.
In this subsection $e_i$ denotes a direction in $\mathbb L$.
Let $\mathbb T_+$ and $\mathbb T_{\mathrm{SS}}$ be the based quantum
tori over $\mathbb Z[\vloop^{\pm1}]$ with products
\[
 Z_\lambda Z_\mu=\vloop^{(\lambda,\mu)}Z_{\lambda+\mu},
 \qquad
 Y_\lambda^{\mathrm{SS}}Y_\mu^{\mathrm{SS}}
 =(-\vloop)^{(\lambda,\mu)}Y_{\lambda+\mu}^{\mathrm{SS}}.
\]

\begin{definition}
\label{def:odd-refinement}
A \emph{quadratic refinement} of the mod-two skew form is a function
$R\colon\mathbb L\to\mathbb F_2$ with
\begin{equation}
\label{eq:refinement-polarization}
 R(\lambda+\mu)=R(\lambda)+R(\mu)+(\lambda,\mu)\pmod2
 \qquad(\lambda,\mu\in\mathbb L).
\end{equation}
A refinement is \emph{odd at a seed} $\mathbf e'$ of the mutation
pattern if $R(e'_k)=1$ for every mutable direction $e'_k$ of that
seed.
\end{definition}

Since $(\lambda,\lambda)=0$, equation
(\ref{eq:refinement-polarization}) gives $R(0)=0$ and
$R(-\lambda)=R(\lambda)$, and by induction
\begin{equation}
\label{eq:refinement-multiple}
 R(\lambda+m\mu)
 =R(\lambda)+mR(\mu)+m(\lambda,\mu)
 \pmod 2
 \qquad(m\in\Z),
\end{equation}
using $(\mu,\mu)=0$; negative $m$ follow from $R(-\mu)=R(\mu)$.

\begin{lemma}
\label{lem:odd-refinement-mutation}
Let $\mathbb L$ be a finite-rank lattice with a skew form and mutation
data as in \cite{SS19}, and let $R$ be a quadratic refinement
(\ref{eq:refinement-polarization}) of its mod-two skew form which is
odd at a seed $\mathbf e$.  Then $R$ is odd at every seed obtained from
$\mathbf e$ by a finite sequence of signed lattice mutations
\[
 \mu_k^\epsilon(e_k)=-e_k,
 \qquad
 \mu_k^\epsilon(e_i)
 =e_i+[\epsilon(e_i,e_k)]_+e_k\quad(i\ne k).
\]
\end{lemma}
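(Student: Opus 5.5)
By induction on the length of the mutation sequence it is enough to treat a single mutation. So I will assume $R$ is odd at a seed $\mathbf e$, fix a mutable index $k$ and a sign $\epsilon$, and show that $R$ is odd at $\mathbf e'=\mu_k^\epsilon(\mathbf e)$. The skew form on $\mathbb L$ is fixed, and mutation only changes the chosen basis vectors. So $R$ is evaluated on the new vectors through the same polarization identity \eqref{eq:refinement-polarization}. No compatibility between $R$ and the exchange matrix beyond the skew form itself is needed.

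For the mutated direction itself, $e'_k=-e_k$, and the remark following Definition~\ref{def:odd-refinement} gives $R(-e_k)=R(e_k)=1$.

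For $i\neq k$, put $m=[\epsilon(e_i,e_k)]_+\in\Z_{\ge0}$. Applying \eqref{eq:refinement-multiple} with $\lambda=e_i$ and $\mu=e_k$ gives
\[
 R(e'_i)=R(e_i)+m\,R(e_k)+m\,(e_i,e_k)=1+m\bigl(1+(e_i,e_k)\bigr)\pmod 2 .
\]
There are two cases.
\begin{enumerate}[label=\textup{(\roman*)}]
 \item If $\epsilon(e_i,e_k)\le0$, then $m=0$, so $R(e'_i)=1$.
 \item If $\epsilon(e_i,e_k)>0$, then $m=|(e_i,e_k)|\equiv(e_i,e_k)\pmod 2$. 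In that case $m(1+(e_i,e_k))\equiv m(1+m)\equiv0$, because $m(m+1)$ is even. Hence $R(e'_i)=1$ again.
\end{enumerate}

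Thus $R$ is odd at $\mathbf e'$, and induction over the finite mutation sequence concludes the proof.

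The only point needing care is the identity \eqref{eq:refinement-multiple} for nonnegative $m$. It follows by induction on $m$ from \eqref{eq:refinement-polarization}, using $(\mu,\mu)=0$ and bilinearity: $(\lambda+j\mu,\mu)=(\lambda,\mu)$ for every $j$.

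The second case is the main step. Its content is that the cross term $m(e_i,e_k)$ exactly cancels $mR(e_k)=m$, because the multiplicity $m$ coincides with the pairing $(e_i,e_k)$ up to sign, hence mod two.
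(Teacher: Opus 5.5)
Your proof is correct and follows the paper's argument essentially verbatim. You reduce to a single mutation, use $R(-e_k)=R(e_k)=1$, and apply \eqref{eq:refinement-multiple}, so the correction $m(1+(e_i,e_k))$ vanishes either because $m=0$ or because $m\equiv(e_i,e_k)\pmod 2$ for a sign $\epsilon=\pm1$.
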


\begin{proof}
It suffices to treat one mutation.  First,
$R(-e_k)=R(e_k)=1$.  Next fix $i\ne k$ and put
$m=[\epsilon(e_i,e_k)]_+\geq0$.  By
(\ref{eq:refinement-multiple}) and $R(e_k)=1$,
\[
 R\bigl(e_i+me_k\bigr)
 =R(e_i)+m\bigl(1+(e_i,e_k)\bigr)\pmod2 .
\]
For $m=0$ the correction vanishes.  Otherwise
$m\equiv(e_i,e_k)\pmod2$, so $m(1+(e_i,e_k))$ is even.  Thus
$R(e_i+me_k)=1$.
\end{proof}

For $\mathbb T_{\mathrm{SS}}$, put $q=\vloop^{-1}$ and
\[
 \Psi_q(X)=\prod_{m\geq0}(1-q^{2m+1}X)^{-1},
 \qquad \Psi_q(q^2X)=(1-qX)\Psi_q(X),
\]
as in \cite[Eqs.~(2.10)--(2.11)]{SS19}, and use
$\operatorname{Ad}_g(a)=gag^{-1}$.  Define
\begin{equation}
\label{eq:SS-chart-pullback}
 \theta_k^{\mathrm{SS}}
 :=\operatorname{Ad}_{\Psi_q(Y^{\mathrm{SS}}_{-e_k})},
 \qquad
 \mathbb T^{\mathrm{SS}}_{\mu_k(\mathbf e)}
 :=(\theta_k^{\mathrm{SS}})^{-1}(\mathbb T_{\mathrm{SS}}).
\end{equation}
The difference equation makes conjugation on every lattice monomial
a finite product of rational binomial factors, so
$\theta_k^{\mathrm{SS}}$ is an automorphism of the fraction field.
The chart embedding is its inverse: $F$ belongs to this chart precisely
when $\theta_k^{\mathrm{SS}}(F)$ is Laurent in the initial torus.
Write
\begin{equation}
\label{eq:SS-Laurent-bound}
 \begin{split}
 \mathcal L^{\mathrm{SS}}_{\mathbf e}
 &:=\mathbb T_{\mathrm{SS}}\cap
       \bigcap_{k\in I_{\mathrm{ex}}}(\theta_k^{\mathrm{SS}})^{-1}
                          (\mathbb T_{\mathrm{SS}})\\
 &=\{F\in\mathbb T_{\mathrm{SS}}:
       \theta_k^{\mathrm{SS}}(F)\in\mathbb T_{\mathrm{SS}}
       \text{ for every mutable }k\}.
 \end{split}
\end{equation}
These are the Laurent conditions in \cite[proof of Lem.~4.2 and
Sec.~5.4]{SS19}.  The displayed chart definition following Eq.~(3.4)
of that version uses the forward image under $\operatorname{Ad}$;
with the above convention its proofs instead give the inverse image
in \eqref{eq:SS-chart-pullback}.

\begin{lemma}
\label{lem:dilog-transport}
Let $\mathbb L$ have an integral skew form and a seed $\mathbf e$,
and let $R$ be a quadratic refinement odd at $\mathbf e$.  Let
$\mathbb T_+$ and $\mathbb T_{\mathrm{SS}}$ be the based quantum tori with
parameters $\vloop$ and $-\vloop$, respectively, and set
$\rho(Z_\lambda)=(-1)^{R(\lambda)}Y^{\mathrm{SS}}_\lambda$.
For $q=\vloop^{-1}$ define
\[
 \theta_k^+
 :=\operatorname{Ad}_{\Psi_q(-Z_{-e_k})},
 \qquad
 \mathbb T^+_{\mu_k(\mathbf e)}
 :=(\theta_k^+)^{-1}(\mathbb T_+),
 \qquad
 \mathcal L^+_{\mathbf e}
 :=\mathbb T_+\cap\bigcap_k\mathbb T^+_{\mu_k(\mathbf e)}.
\]
Then $\rho$ intertwines $\theta_k^+$ with
$\theta_k^{\mathrm{SS}}$ for each $k$, identifies their inverse-image
charts, and restricts to an isomorphism
\begin{equation}
\label{eq:signed-upper-transport}
 \rho\colon\mathcal L^+_{\mathbf e}
 \xrightarrow{\ \sim\ }\mathcal L^{\mathrm{SS}}_{\mathbf e}.
\end{equation}
The same assertions hold at every seed reached by signed lattice
mutations, with its corresponding directions and based quantum tori.
\end{lemma}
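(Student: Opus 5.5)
We first check that $\rho$ is an isomorphism of based quantum tori $\mathbb T_+\to\mathbb T_{\mathrm{SS}}$, then verify the intertwining $\rho\circ\theta_k^+=\theta_k^{\mathrm{SS}}\circ\rho$ on generators, and finally deduce the statements about charts and Laurent intersections formally.

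\textbf{Multiplicativity of $\rho$.} Compute
\[
 \rho(Z_\lambda)\rho(Z_\mu)
 =(-1)^{R(\lambda)+R(\mu)}(-\vloop)^{(\lambda,\mu)}Y^{\mathrm{SS}}_{\lambda+\mu}.
\]
By \eqref{eq:refinement-polarization}, $(-1)^{R(\lambda)+R(\mu)+(\lambda,\mu)}=(-1)^{R(\lambda+\mu)}$, so the right side equals $\vloop^{(\lambda,\mu)}\rho(Z_{\lambda+\mu})=\rho(Z_\lambda Z_\mu)$. The map $\rho$ is bijective on based monomials up to sign, hence a $\Z[\vloop^{\pm1}]$-algebra isomorphism. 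It extends to the skew fields of fractions, and to the completions in which $\Psi_q$ of a monomial makes sense.

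\textbf{Intertwining.} Since $R(-e_k)=R(e_k)=1$ by oddness, we have $\rho(-Z_{-e_k})=Y^{\mathrm{SS}}_{-e_k}$. Hence $\rho(\Psi_q(-Z_{-e_k}))=\Psi_q(Y^{\mathrm{SS}}_{-e_k})$, and applying $\rho$ to $\operatorname{Ad}$ gives
\[
 \rho\circ\theta_k^+=\theta_k^{\mathrm{SS}}\circ\rho
\]
on the fraction field. To avoid working in a completion, one can instead note that conjugation of a monomial is determined by the difference equation $\Psi_q(q^2X)=(1-qX)\Psi_q(X)$. This expresses $\theta_k^{\mathrm{SS}}(Y_\lambda)$ as $Y_\lambda$ times a finite product of factors $(1-q^{2j+1}Y_{-e_k})^{\pm1}$. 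The same formula with $-Z_{-e_k}$ holds for $\theta_k^+$, because commuting $Z_\lambda$ past $Z_{-e_k}$ produces $\vloop^{\pm 2(\lambda,-e_k)}$-type shifts. These match $(-\vloop)^{\ldots}$ exactly when the sign of the generator is absorbed, and the identity $\rho(Z_{-e_k})=-Y_{-e_k}$ is precisely what absorbs it. I expect this sign and parity bookkeeping to be the only delicate point.

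\textbf{Charts and Laurent sets.} Since $\rho(\mathbb T_+)=\mathbb T_{\mathrm{SS}}$, the intertwining gives
\[
 \rho\bigl((\theta_k^+)^{-1}(\mathbb T_+)\bigr)
 =(\theta_k^{\mathrm{SS}})^{-1}(\mathbb T_{\mathrm{SS}}).
\]
Intersecting over all mutable $k$ together with the initial torus yields \eqref{eq:signed-upper-transport}.

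\textbf{Other seeds.} At a seed reached by signed lattice mutations, Lemma~\ref{lem:odd-refinement-mutation} shows that $R$ is still odd at the new directions. The same $\rho$, being defined on all of $\mathbb L$ and independent of the seed, therefore satisfies the hypotheses at that seed. Repeating the argument with the new directions and the corresponding based quantum tori (which are the same tori $\mathbb T_+,\mathbb T_{\mathrm{SS}}$ with new distinguished directions) gives the final assertion.
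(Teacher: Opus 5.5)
Your proposal is correct and follows the paper's own argument. Polarization of $R$ makes $\rho$ an isomorphism of based tori, and oddness gives $\rho(-Z_{-e_k})=Y^{\mathrm{SS}}_{-e_k}$, so $\rho$ carries the finite binomial formulas for $\theta_k^+$ to those for $\theta_k^{\mathrm{SS}}$, and hence carries the charts and their intersection. Lemma~\ref{lem:odd-refinement-mutation} then transports oddness to later seeds.

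One small correction to your sign remark: the commutation factors $\vloop^{2(\lambda,\mu)}=(-\vloop)^{2(\lambda,\mu)}$ agree outright. Oddness is therefore needed only to fix the argument of $\Psi_q$, as your first, formal argument already shows.
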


\begin{proof}
The quadratic-refinement identity makes $\rho$ an algebra isomorphism,
because its polarization cancels the sign in the torus product.  Since $R(-e_k)=R(e_k)=1$,
\[
 \rho(-Z_{-e_k})=Y^{\mathrm{SS}}_{-e_k}.
\]
The scalar parameter $q=\vloop^{-1}$ is unchanged.  Applying $\rho$
to the finite binomial ratios obtained from the difference equation
for $\Psi_q$ therefore gives
\[
 \rho\circ\theta_k^+=\theta_k^{\mathrm{SS}}\circ\rho,
 \qquad
 \rho\bigl((\theta_k^+)^{-1}(\mathbb T_+)\bigr)
 =(\theta_k^{\mathrm{SS}})^{-1}(\mathbb T_{\mathrm{SS}}).
\]
Taking the finite intersections proves
(\ref{eq:signed-upper-transport}).  Lemma
\ref{lem:odd-refinement-mutation} preserves the needed oddness at
every subsequent seed, so the same calculation applies there.
\end{proof}

\begin{proposition}
\label{prop:v-upper-A-comparison}
Let $\mathbb L$, $\mathbf e$, and $R$ be as in Lemma
\ref{lem:dilog-transport}, and let $\boldsymbol\alpha$ be an
integral basis of $\mathbb L$ satisfying
$(e_i,\alpha_j)=\delta_{ij}$ for mutable $i$.  Put
$\Lambda_{ij}=2(\alpha_i,\alpha_j)$ and
$e_k=\sum_j b_{jk}\alpha_j$.  With the frozen variables inverted,
\[
 \mathcal L^+_{\mathbf e}
 =\mathcal U(\Lambda,\widetilde B)
 \quad\text{inside }\operatorname{Frac}(\mathbb T_+).
\]
\end{proposition}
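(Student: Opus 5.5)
The plan is to identify the based quantum torus $\mathbb T_+$ with the quantum torus $\mathcal T(\Lambda)$ of the initial seed, and to show that each chart $\mathbb T^+_{\mu_k(\mathbf e)}$ is exactly the Laurent torus of the seed mutated once in direction $k$. The proposition then follows from the quantum upper bound theorem of Berenstein--Zelevinsky.

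\emph{Step 1: the two tori.} Define $\phi\colon\mathcal T(\Lambda)\to\mathbb T_+$ by $X^c\mapsto Z_{\sum_j c_j\alpha_j}$. By \eqref{eq:abstract-quantum-torus},
\[
 X^cX^d=\vloop^{\frac12c^{\mathsf T}\Lambda d}X^{c+d}
 =\vloop^{(\sum c_i\alpha_i,\sum d_j\alpha_j)}X^{c+d}.
\]
This matches the product in $\mathbb T_+$, and $\boldsymbol\alpha$ is an integral basis, so $\phi$ is an isomorphism of based quantum tori. It extends to fraction fields. Under $\phi$, the frozen-inverted initial Laurent torus is all of $\mathbb T_+$.

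I will also record compatibility. Write $e_k=\sum_j b_{jk}\alpha_j$ and use $(e_i,\alpha_j)=\delta_{ij}$ for mutable $i$. This gives $2(e_k,\alpha_j)=\bigl(\widetilde B^{\mathsf T}\Lambda\bigr)_{kj}=2\delta_{kj}$ for every $j\in I$, frozen or not. So $(\Lambda,\widetilde B)$ is compatible with $D=2I$.

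\emph{Step 2: one mutation.} Since $(e_k,\alpha_j)=\delta_{kj}$, the element $Z_{-e_k}$ commutes with $Z_{\alpha_j}$ for $j\neq k$. Hence $\theta_k^+$ fixes every $X_j$ with $j\ne k$. For $j=k$, the relation $Z_{-e_k}Z_{\alpha_k}=\vloop^{-2}Z_{\alpha_k}Z_{-e_k}$ and the difference equation $\Psi_q(q^2X)=(1-qX)\Psi_q(X)$ give
\[
 \theta_k^+(Z_{\alpha_k})=Z_{\alpha_k}\bigl(1+\vloop^{\pm1}Z_{-e_k}\bigr)^{\pm1},
\]
a single binomial factor. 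On the cluster side, $c_k^+-c_k^-=e_k$ in the $\alpha$-coordinates, so \eqref{eq:quantum-seed-mutation} reads
\[
 \phi(X'_k)=Z_{c_k^+}\bigl(1+\vloop^{m}Z_{-e_k}\bigr)
\]
for an explicit exponent $m$ computed from \eqref{eq:toric-frame-product}. I will check by direct computation that $\theta_k^+(\phi(X'_k))$ is a Laurent monomial in $\mathbb T_+$. The binomial factors should cancel, and $\theta_k^+$ fixes the $Z_{\alpha_j}$ appearing in $c_k^+$.

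It follows that $\theta_k^+$ maps the mutated torus $\phi(\mathcal T_{\mu_k\Sigma})$, generated by $X_j^{\pm1}$ ($j\ne k$) and $X_k'^{\pm1}$, into $\mathbb T_+$. The inverse of $\theta_k^+$ is again conjugation, now by $\Psi_q(-Z_{-e_k})^{-1}$, which gives the reverse inclusion. Hence $\mathbb T^+_{\mu_k(\mathbf e)}=\phi(\mathcal T_{\mu_k\Sigma})$, and
\[
 \mathcal L^+_{\mathbf e}=\phi\Bigl(\mathcal T_\Sigma\cap\bigcap_k\mathcal T_{\mu_k\Sigma}\Bigr).
\]

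\emph{Step 3: upper bound.} The compatibility of Step 1 makes $\widetilde B$ of full rank, since $\widetilde B^{\mathsf T}\Lambda=(2I\;0)$. The quantum upper bound theorem \cite[Thm.~5.1]{BZ05} then says that the intersection of the initial Laurent torus with the adjacent tori equals $\mathcal U(\Lambda,\widetilde B)$, with the frozen variables inverted as in our convention. This proves the proposition.

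I expect the main obstacle to be Step 2: matching conventions exactly. Three things must agree: the direction of $\operatorname{Ad}$ (the inverse-image convention of \eqref{eq:SS-chart-pullback}), the power of $\vloop$ in the binomial $1+\vloop^{\pm1}Z_{-e_k}$, and the sign $-Z_{-e_k}$ inside $\Psi_q$. All three must line up so that the factor produced by $\theta_k^+$ cancels the binomial in $X'_k$, rather than producing a square or the factor with the wrong shift. I would do this calculation first, treating the commutation of $Z_{c_k^+}$ with $Z_{-e_k}$ separately, since $(c_k^+,-e_k)=-\delta$-terms only through $-\alpha_k$.
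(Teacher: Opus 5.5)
Your proposal is correct and is essentially the paper's proof. The paper also identifies each inverse-image chart with the once-mutated seed torus and then invokes \cite[Thm.~5.1]{BZ05}. It computes $(\theta_k^+)^{-1}(Z_{c_k^+})=Z_{c_k^+}+Z_{c_k^+-e_k}$, which is your check $\theta_k^+(X_k')=Z_{c_k^+}$ with $m=1$. For the reverse inclusion it uses that replacing $\alpha_k$ by $c_k^+$ is a unimodular basis change, a point you should state explicitly.

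Add one detail: \cite[Thm.~5.1]{BZ05} is stated over $\mathbb Z[q^{\pm1/2}]$, so applying it with $\Lambda$ and $D=2I$ gives the equality over $\mathbb Z[\vloop^{\pm1/2}]$. The paper stays over $\mathbb Z[\vloop^{\pm1}]$ by applying the theorem with form $\Lambda/2$ and parameter $t=\vloop^2$, so that compatibility reads $\widetilde B^{\mathsf T}(\Lambda/2)=(I\;0)$.
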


\begin{proof}
We identify each of the once-mutated tori.  Fix $k$ and write
\[
 c=-\alpha_k+\sum_{j\ne k}[b_{jk}]_+\alpha_j,
 \qquad f=-e_k.
\]
Replacing $\alpha_k$ by $c$ is a unimodular basis change, and
$(f,c)=1$, while $(f,\alpha_j)=0$ for $j\ne k$.  Put $X=Z_f$.
The torus product gives $XZ_c=q^{-2}Z_cX$, and the difference
identity for $\Psi_q$ gives
\[
 (\theta_k^+)^{-1}(Z_c)
 =Z_c(1+q^{-1}X)=Z_c+Z_{c+f}.
\]
Here $Z_cX=qZ_{c+f}$ and
\[
 c+f=-\alpha_k+\sum_{j\ne k}[-b_{jk}]_+\alpha_j.
\]
Thus the displayed sum is exactly the exchange sum in
\eqref{eq:quantum-seed-mutation}, with coefficient one on both monomials.  Every
$Z_{\alpha_j}$ for $j\ne k$ is fixed.  The images of the basis
monomials and their inverses generate
$(\theta_k^+)^{-1}(\mathbb T_+)$, so this is the full based quantum torus of
the seed mutated at $k$, including the frozen inverses.

By \cite[Thm.~5.1 and Eq.~(5.3)]{BZ05}, this finite upper bound
is the intersection over all seeds. Apply that result with form
$\Lambda/2$ and parameter $t=\vloop^2$: compatibility becomes
$\widetilde B^{\mathsf T}(\Lambda/2)=(I\;0)$, and
$\mathbb Z[t^{\pm1/2}]=\mathbb Z[\vloop^{\pm1}]$.
The based product is
$t^{(\alpha_i,\alpha_j)/2}=\vloop^{(\alpha_i,\alpha_j)}$,
so the torus and exchange monomials are exactly those above.
\end{proof}

\subsection{Iterated quantum monoidal mutation}
\label{sec:KKKO-comparison}

The construction of successive monoidal mutations follows \cite[Thm.~4.34(ii)]{CW19}, with the
factorization construction of homogeneous $r$-matrices from
\cite[Sec.~7]{CW23}.  The analogous admissibility theorem for symmetric
KLR algebras is \cite[Thm.~7.1.3 and Cor.~7.1.4]{KKKO18}.

\begin{lemma}
\label{lem:frozen-localized-factorization}
Let $\mathcal C$ be a $\C$-linear, locally finite, finite-length graded
monoidal abelian category with biexact product and
$\operatorname{End}(\mathbf1)=\C$, whose exact grading shift is a central
monoidal action.  Suppose that $\mathcal C$ has separable triple products
and homogeneous renormalized $r$-matrices
\[
 r_{M,N}:M\conv N\longrightarrow N\conv M\{\Lambda(M,N)\}
\]
which are nonzero for nonzero $M,N$ and satisfy Definition
\ref{def:homogeneous-r-matrices}.

Let a quantum monoidal seed in $\mathcal C$ have tensor-invertible frozen
objects, admit a mutation in every exchangeable direction, and have no zero
mutable column after any sequence of matrix mutations.  Assume also that the
compatibility diagonal is $D=2I_{I_{\mathrm{ex}}}$ and that the commutation
form of every seed obtained by matrix mutation is even, so that every based monomial
normalization and exchange shift is integral.  Suppose its initial
map from the quantum affine space is injective, $K_0(\mathcal C)$ is a domain, and
the coefficient map $R=\Z[\vloop^{\pm1}]\to K_0(\mathcal C)$ is injective.
Finally, suppose that the initial map extends, in the same toric frame, to
an injective cluster-algebra map after extension of scalars
\[
 \mathbb Q(\vloop)\otimes_R\mathcal A(\Lambda,\widetilde B)
 \lhook\joinrel\longrightarrow
 \mathbb Q(\vloop)\otimes_RK_0(\mathcal C).
\]
Then the seed admits every finite sequence of quantum monoidal mutations
and the corresponding forward and reverse exchange sequences.  Every
quantum cluster variable is represented by a real simple object and every
localized quantum cluster monomial by a simple normalized product.
\end{lemma}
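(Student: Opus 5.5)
The argument runs by induction on the length of the mutation path, following the proof of \cite[Thm.~4.34(ii)]{CW19}. The inductive statement is the following: for every seed $\Sigma'$ reached by a finite sequence of matrix mutations there is a quantum monoidal seed $\{M_i^{\Sigma'}\}$ in $\mathcal C$ with exchange matrix $\widetilde B_{\Sigma'}$, whose classes realize the toric frame $M_{\Sigma'}$ under the given cluster-algebra map. For such a seed we then prove that in every exchangeable direction $k$ there is a simple object $M_k'$ together with a forward sequence (\ref{eq:abstract-monoidal-mutation}). The base case is the hypothesis that the initial seed admits a mutation in every direction. The inductive step therefore has two parts. First, given a short exact sequence of the form (\ref{eq:abstract-monoidal-mutation}), Lemma~\ref{lem:local-mutation-test} shows that the mutated collection is again a quantum monoidal seed and supplies the reverse sequence (\ref{eq:abstract-reverse-monoidal-mutation}). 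Second, and this is the real content, we must produce the exchange sequences at the new seed in every direction.

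For Lemma~\ref{lem:local-mutation-test} to apply, a division-ring embedding is needed at each stage. We take the skew field of $\mathbb Q(\vloop)\otimes_R\mathcal A(\Lambda,\widetilde B)$. The assumed injective map after scalar extension, combined with the hypotheses that $K_0(\mathcal C)$ is a domain and that $R\to K_0(\mathcal C)$ is injective, makes $K_0(\mathcal C)$ compatible with the localization of this Ore domain. Every mutated toric frame is defined in the same ambient field by \cite[Prop.~4.7]{BZ05}. The requirement that exchange columns be nonzero is an explicit hypothesis. Evenness of every mutated commutation form and $D=2I$ make all shifts $s_k^\pm$, $d_k/2$ and the normalizations $\mathbf M(a)$ integral, so every object stays in $\mathcal C$, as in Lemma~\ref{lem:CW-half-shift-extension}. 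Frozen localization follows from exactness of convolution with the invertible frozen objects, as in the remark after Lemma~\ref{lem:local-mutation-test}.

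The main obstacle is producing the exchange sequences in directions $j\neq k$ at the mutated seed $\mu_k\Sigma$. Here I would follow the argument of \cite[Thm.~4.34(ii)]{CW19}, which is the categorical analogue of \cite[Thm.~7.1.3]{KKKO18}.

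We may assume $b_{jk}\neq 0$; otherwise the exchange sequence for $j$ at $\Sigma$ remains valid at $\mu_k\Sigma$ up to frozen-free monomial bookkeeping. We also have the simple $M_j'$ from $\Sigma$. The candidate new mutation of $M_j$ at $\mu_k\Sigma$ is the head (or socle) of the product of $M_j'$ with an appropriate power of $M_k'$ or $M_k$; which one is determined by the sign of $b_{jk}$. Its simplicity follows from reality of $M_k'$ via \cite[Cor.~4.15]{CW19}. The required short exact sequence is then obtained by composing renormalized $r$-matrices. Each $r$-matrix is homogeneous, so every degree comparison in the axioms of Definition~\ref{def:homogeneous-r-matrices} must be tracked with explicit shifts. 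The shift comparisons reduce to the identity $s_k^+-s_k^-=-d_k/2=-1$ together with the values of $\Lambda(M,N)$ on $\vloop$-commuting pairs, which equal the entries $\lambda_{ij}$ since $r$-matrices between $\vloop$-commuting real simples are isomorphisms of the degree dictated by their classes.

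Exactness is verified in $K_0$ by checking that the classes satisfy the quantum exchange relation of the mutated seed, using injectivity into the skew field. Once lengths match, one deduces that the kernel and cokernel are the predicted normalized monomials. Separable triple products are used exactly where \cite{CW19} uses them, namely to identify heads with socles.

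Iterating gives every finite mutation sequence. Cluster variables are then real simples by Lemma~\ref{lem:homogeneous-reality-criterion}, applied inside Lemma~\ref{lem:local-mutation-test}, and cluster monomials $\mathbf M(a)$ are simple by \cite[Cor.~4.15]{CW19}.
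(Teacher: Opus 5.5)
There is a genuine gap in the inductive step, and your candidate for the new object is wrong in general. Take the case $b=b_{kj}>0$, with the first mutation at $k$ and the second at $j$, and put $L=(M'_k)^{\conv b}$. The head $H=\operatorname{hd}(L\conv M'_j)$ is simple, but it is not the new cluster variable.

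The paper proceeds as follows. It convolves the old $j$-exchange sequence $0\to E_+\to E_0\to E_-\to0$, where $E_0$ is a shift of $M_j\conv M'_j$, with $L$ on both sides. It joins the two rows by $r_{L,E_+},r_{L,E_0},r_{L,E_-}$; these have a common degree, so the ladder commutes by naturality. The map $r_{L,E_-}$ is an isomorphism because $E_-$ involves only unchanged seed objects. Taking images gives an exact sequence whose terms are shifts of $X\conv A$, $M_j\conv H$, and $Y\conv A$. Here $X,Y$ are the two exchange monomials of $\mu_k\Sigma$ in direction $j$, and $A=\mathbf M(a)$ with $a_i=[-b_{ij}]_+-[-b'_{ij}]_+$.

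Comparing classes gives $[H]=\vloop^{m}\Phi[A]$, where $\Phi$ is the cluster variable of $\mu_j\mu_k$. So $H$ is a shift of $M''_j\nconv A$, not of $M''_j$. Moreover $A\neq\mathbf 1$ whenever there is an oriented triangle $i\to k\to j\to i$. This happens in this paper's own quiver: take $u_a\to y_1\to z_1\to u_a$ with $k=y_1$ and $j=z_1$.

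What is missing is the extraction of $M''_j$ from $H$. The paper first uses the hypothesized injection $\mathbb Q(\vloop)\otimes_R\mathcal A\hookrightarrow\mathbb Q(\vloop)\otimes_RK_0(\mathcal C)$ to put $\Phi$ on the categorical side. It then cancels $[M_j]$ in the domain and expands $\Phi$ in the basis of shift-orbit representatives of simple classes. Because $\Phi$ $\vloop$-commutes with $[A]$, each product of an occurring simple with $A$ is simple, and these products lie in distinct shift orbits. Simplicity of $H$ then forces $\Phi=[M''_j]$ for a single simple object $M''_j$.

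Only after this does one obtain the forward sequence. It comes from the two ordered class identities and the head/socle argument, with $r$-matrix axiom (iv) ruling out the wrong order. Reality then follows from Lemma~\ref{lem:homogeneous-reality-criterion}.

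Your step ``exactness is verified in $K_0$'' cannot replace this. Exactness is not a $K_0$ statement, and without the ladder you have no way to compute the class of a head.

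Relatedly, you use the scalar-extended injection to embed $K_0(\mathcal C)$ in the skew field of $\mathcal A$, but it goes the other way: it maps $\mathcal A$ into $K_0(\mathcal C)$. It therefore does not supply the division-ring hypothesis of Lemma~\ref{lem:local-mutation-test} at later stages. Its actual role in the paper is the cancellation comparison above.
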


\begin{proof}
We apply the proof of \cite[Thm.~4.34(ii)]{CW19} in $\mathcal C$. That theorem is stated for an ungraded category and its
graded realization; here the following checks replace passage to
the ungraded category.

The domain and coefficient hypotheses make grading shifts free on
simple isomorphism classes. Finite length then makes representatives
of their shift orbits an $R$-basis of $K_0(\mathcal C)$.
As in the proof of Lemma~\ref{lem:local-mutation-test}, the
results on heads, socles, and commutation in
\cite[Props.~4.12, 4.14, 4.17--4.18, Cor.~4.15 and Thm.~4.23]{CW19}
hold when the target of each $r_{M,N}$ is shifted by $\{\Lambda(M,N)\}$.
The same argument applies to \cite[Lem.~4.24]{CW19}. For reality we use
Lemma~\ref{lem:homogeneous-reality-criterion}.

It suffices to show that, after a first mutation $x$, the seed still
admits mutation in every mutable direction.
Write $\widetilde B'=\mu_x(\widetilde B)$ and consider a second
direction $y$ with $b=b_{xy}>0$; the other cases are treated in
\cite[proof of Thm.~4.34(ii)]{CW19} by interchanging the directions
or commuting the mutations. For $i\ne x$, put
\[
 p_i=[b_{iy}]_+,\qquad q_i=[-b'_{iy}]_+,\qquad
 a_i=[-b_{iy}]_+-q_i,\qquad c_i=b[b_{ix}]_+-a_i,
\]
and set the $x$-coordinates of these vectors to zero. Matrix mutation
gives $a_i,c_i\geq0$. Using the normalized seed monomials of
\eqref{eq:normalized-categorical-monomial}, define
\[
 P=\mathbf M(p),\qquad Q=\mathbf M(q),\qquad
 A=\mathbf M(a),\qquad B=\mathbf M(c),\qquad L=(M'_x)^{*b}.
\]
Then $b_y^+=be_x+p$, $b_y^-=a+q$, and $b\,b_x^+=a+c$.
Thus the existing exchange sequence at $y$ has terms
\[
 E_+=(M_x^{*b}\nconv P)\{-1\},\qquad
 E_0=(M_y\conv M'_y)\{-s_y^-\},\qquad E_-=Q\nconv A.
\]
The degree calculation preceding \cite[Eq.~(4.40)]{CW19} gives
\[
 \delta=\Lambda(L,E_+)=\Lambda(L,E_0)=\Lambda(L,E_-).
\]
Convolving that sequence with $L$ on each side, and shifting the
second row by $\{\delta\}$, gives the diagram
\[
\begin{tikzcd}[column sep=small,row sep=large]
0\arrow[r]
 &L\conv E_+\arrow[r]\arrow[d,"r_{L,E_+}"']
 &L\conv E_0\arrow[r]\arrow[d,"r_{L,E_0}"]
 &L\conv E_-\arrow[r]\arrow[d,"r_{L,E_-}","\sim"']
 &0\\
0\arrow[r]
 &(E_+\conv L)\{\delta\}\arrow[r]
 &(E_0\conv L)\{\delta\}\arrow[r]
 &(E_-\conv L)\{\delta\}\arrow[r]
 &0.
\end{tikzcd}
\]
Both rows are exact, and all vertical maps have degree zero.
The naturality axiom \cite[Def.~4.1(v)]{CW19}, together with the equality
of the displayed $r$-matrix degrees, makes the diagram commute. Its right vertical map is an isomorphism,
since $E_-$ is a monomial in unchanged seed objects. Taking images
therefore gives a short exact sequence in $\mathcal C$.
As in the cited proof, its middle term is a shift of $M_y\conv H$,
where $H=\operatorname{hd}(L\conv M'_y)$ is simple. Its outer terms
are shifts of $X\conv A$ and $Y\conv A$, where
$X=P\nconv B$ and $Y=L\nconv Q$.
The shift calculation following that diagram, using $D=2I$, gives
the relative coefficient $\vloop^{-1}$ between the outer terms.
All shifts are integral because the commutation forms are even.

Let $\Phi$ be the cluster variable obtained by mutation first at $x$
and then at $y$. Using the assumed embedding after extension of scalars
to $\mathbb Q(\vloop)$, we make the comparison in the cited proof with
the short exact sequence obtained by taking images of the vertical
$r$-matrices:
\begin{equation}
\label{eq:graded-propagation-cancellation}
 \vloop^{m_1}[M_y][H]=\vloop^{m_2}[M_y]\Phi[A]
 \qquad(m_1,m_2\in\mathbb Z).
\end{equation}
Cancel $[M_y]$ in the domain and expand $\Phi$ in the basis given by
classes of simple objects, choosing one representative from each orbit
under grading shifts. Since $\Phi$ $\vloop$-commutes with $[A]$,
\cite[Thm.~4.23 and Prop.~4.14]{CW19} make the product of $A$
with each occurring simple object simple. These products belong
to distinct shift orbits by cancellation of $[A]$. As $H$ is
simple, the equality forces $\Phi=[M''_y]$ for a single simple
object with an integral shift absorbed into its normalization.

For integers $m,\ell$, the two ordered identities at the end of
the cited proof become
\begin{equation}
\label{eq:graded-propagation-ordered-identities}
\begin{aligned}
 \vloop^{-m}[M_y][M''_y]&=\vloop^{-1}[X]+[Y],\\
 \vloop^{-\ell}[M''_y][M_y]&=[X]+\vloop^{-1}[Y].
\end{aligned}
\end{equation}
The argument identifying heads and socles in the cited proof applies to
$M_y\conv M''_y$ and $M''_y\conv M_y$. If $Y$ were the socle of $(M_y\conv M''_y)\{-m\}$, it would give
$\Lambda(M''_y,M_y)+\Lambda(M_y,M''_y)=-2$, contrary to
$r$-matrix axiom (iv). Thus
\begin{equation}
\label{eq:graded-propagated-exchange-sequence}
 0\longrightarrow X\{-1\}
 \longrightarrow(M_y\conv M''_y)\{-m\}
 \longrightarrow Y\longrightarrow0.
\end{equation}
The unchanged seed objects are real and their classes
$\vloop$-commute with $\Phi$, so
\cite[Thm.~4.23 and Prop.~4.14]{CW19} give the required
commutations and simple products with $M''_y$, including those
with the real simple monomials $X,Y$. The products
$X\conv M''_y$ and $Y\conv M''_y$ are distinct up to shift by
the nonzero exchange column and cancellation. Apply
Lemma~\ref{lem:homogeneous-reality-criterion} to
\eqref{eq:graded-propagated-exchange-sequence}, shifted by $\{m\}$,
to obtain reality
of $M''_y$. The remaining conclusions of the induction are
\cite[Cor.~4.15, Rem.~4.29 and Thm.~4.34(ii)]{CW19}; independence
follows from the mutated toric frame. Finally, convolution with
invertible frozen objects preserves simplicity, giving the
localized assertion as in \cite[Cor.~4.35]{CW19}.
\end{proof}

\section{The group and its lattice models}
\label{sec:group}

Throughout, let $n\geq2$ and $r\geq1$.

\subsection{The quotient group and the framed quiver}

Put $V=\C^n$ and $N_{n,r}=V^{\oplus r}$. The homomorphism
\[
 GL_n\times(\C^\times)^r\longrightarrow GL(N_{n,r}),\qquad
 (g,z_1,\ldots,z_r)\longmapsto(gz_1,\ldots,gz_r)
\]
has central kernel $K_r=\{(z^{-1}I_n,z,\ldots,z)\}$. Its image is
\begin{equation}
\label{eq:gln-effective-group}
 G_{n,r}=\frac{GL_n\times(\C^\times)^r}{K_r}
       \simeq GL_n\times(\C^\times)^{r-1}.
\end{equation}
This quotient acts faithfully on $N_{n,r}$. In the coordinates
$A=gz_1$ and $h_a=z_1z_a^{-1}$, the action is
\[
 (A,h_2,\ldots,h_r)(v_1,\ldots,v_r)
       =(Av_1,Ah_2^{-1}v_2,\ldots,Ah_r^{-1}v_r).
\]
Equivalently, setting $B=A$ and $c_a=h_a^{-1}$ gives
\begin{equation}
\label{eq:hf-effective-group}
 (B,c_2,\ldots,c_r)(v_1,\ldots,v_r)
       =(Bv_1,c_2Bv_2,\ldots,c_rBv_r).
\end{equation}
The central cocharacter $\eta(z)=(zI_n,1,\ldots,1)$ in these
coordinates acts with weight one on every summand of $N_{n,r}$.

This is the natural gauge-group action on the representation space of the
framed star-shaped quiver with arrows
\[
 f\longrightarrow i\longleftarrow j_a\qquad(2\leq a\leq r),
 \qquad \dim V_i=n,\quad\dim V_{j_a}=\dim W_f=1.
\]
Its gauge group is $G_Q=GL(V_i)\times\prod_{a=2}^rGL(V_{j_a})$,
its flavor group is $F_Q=GL(W_f)$, and its polarization is
$\Hom(W_f,V_i)\oplus\bigoplus_{a=2}^r\Hom(V_{j_a},V_i)$.
For gauge coordinates $(g,h_2,\ldots,h_r)$ and flavor coordinate $f$,
the change of variables $B=gf^{-1}$ and $c_a=fh_a^{-1}$ gives
\begin{equation}
\label{eq:hf-flavor-splitting}
 G_Q\times F_Q\simeq G_{n,r}\times\C_f^\times,
\end{equation}
where the last factor acts trivially on the representation. Thus
the gauge datum is $(G_{n,r},N_{n,r})$ on the slice $f=1$; the extra
flavor character may be specialized to one in the equivariant theory.

\subsection{Coweights and connected components}

Let $T_{n,r}$ be the diagonal maximal torus. In the quotient presentation
write a coweight lift as
$(\mu_1,\ldots,\mu_n;m_1,\ldots,m_r)\in\Z^{n+r}$, and put
$\kappa=(-1,\ldots,-1;1,\ldots,1)$. Then
\begin{equation}
\label{eq:coweight-and-component-lattices}
 X_*(T_{n,r})=\Z^{n+r}/\Z\kappa,
 \qquad
 \pi_1(G_{n,r})=X_*(T_{n,r})/
       \langle e_i-e_{i+1}:1\leq i<n\rangle.
\end{equation}
Set $\omega_k=(1^k,0^{n-k};0,\ldots,0)$, $1\leq k\leq n$,
and $\gamma_a=(0,\ldots,0;e_a)$. The distinction between these
two quotients gives
\begin{equation}
\label{eq:gln-component-relation}
 \omega_n=\sum_{a=1}^r\gamma_a\quad\text{in }X_*(T_{n,r}),
 \qquad
 n[\omega_1]=\sum_{a=1}^r[\gamma_a]\quad\text{in }\pi_1(G_{n,r}),
\end{equation}
and hence
\begin{equation}
\label{eq:original-component-relation}
 \pi_1(G_{n,r})\simeq
 \bigl\langle[\omega_1],[\gamma_1],\ldots,[\gamma_r]
       \bigm|n[\omega_1]=\textstyle\sum_a[\gamma_a]\bigr\rangle
 \simeq\Z^r.
\end{equation}
Arithmetic in a Schubert subscript takes place in $X_*(T_{n,r})$;
brackets denote the image in the component group. In particular,
$n\omega_1$ and $\omega_n$ index different Schubert varieties,
although they have the same component class.

A dominant lift satisfies $\mu_1\geq\cdots\geq\mu_n$. Write
$\Gr_\lambda^\circ$ for its $G_{n,r}(\O)$-orbit and
$\Gr_\lambda=\overline{\Gr_\lambda^\circ}$. The usual orbit
parametrization and dimension formula \cite[Secs.~1.1--1.2]{Zhu17} give
\begin{equation}
\label{eq:schubert-dimension-component}
 \dim\Gr_\lambda=\sum_{i=1}^n(n+1-2i)\mu_i,
 \qquad
 [\Gr_\lambda]=\Bigl(\sum_i\mu_i\Bigr)[\omega_1]
                       +\sum_a m_a[\gamma_a].
\end{equation}
Both expressions are unchanged on adding $\kappa$. The valuation
of $\det A$ is $\sum_i\mu_i+nm_1$.
Each $\omega_k$ is minuscule, with
$\Gr_{\omega_k}\simeq\operatorname{Gr}(n-k,V)$ for $k<n$;
$\omega_n$ and all $\gamma_a$ have point orbits. A nondominant
subscript means its dominant Weyl representative.

\subsection{The lattice model and the derived intersection}
\label{sec:lattice-models}

Set $L_0=V[[t]]$. A point of the affine Grassmannian is represented
by a lattice $\Lambda\subset V((t))$ and fractional ideals
$L_1,\ldots,L_r\subset\K$, modulo
\begin{equation}
\label{eq:lattice-scaling-equivalence}
 [\Lambda,L_1,\ldots,L_r]\sim[t^{-1}\Lambda,tL_1,\ldots,tL_r].
\end{equation}
Indeed, $H^1(\operatorname{Spec}\K,\G_m)=
H^1(\operatorname{Spec}\O,\G_m)=0$, so the central quotient
\eqref{eq:gln-effective-group} is surjective on both loop and
positive-loop points. The remaining ambiguity is
$K_r(\K)/K_r(\O)\simeq\Z$, acting as displayed.
The representation sends this point to the lattice
\[
 \mathbf L=\bigoplus_{a=1}^rL_a\Lambda\subset N_{n,r}((t)),
\]
which is invariant under \eqref{eq:lattice-scaling-equivalence}.

Over the Schubert variety $\Gr_\lambda$ let $\T_\lambda$ be this
tautological lattice bundle and put
$\mathcal E_\lambda=\Gr_\lambda\times N_{n,r}[[t]]$.
The derived restriction of the space of triples and its reduced
classical locus are
\begin{equation}
\label{eq:component-derived-intersection}
 \R_\lambda=
 \mathcal E_\lambda\mathop{\times}^{\mathbf R}_{
             \Gr_\lambda\times N_{n,r}((t))}\T_\lambda,
 \qquad
 \R_\lambda^{\cl}=t_0(\R_\lambda)_{\mathrm{red}}.
\end{equation}
Thus the fiber of $\R_\lambda^{\cl}\to\Gr_\lambda$ at $[\Lambda,L_1,\ldots,L_r]$ is
$\bigoplus_a(L_a\Lambda\cap L_0)$, as in
\cite[Sec.~2(ii)]{BFN18}. The derived construction is made on finite
lattice quotients and passed to the compatible limit
\cite[Sec.~4.1]{CW23}; on every such quotient its structure sheaf is
\begin{equation}
\label{eq:component-koszul-resolution}
 \O_{\R_\lambda}\simeq
 \O_{\mathcal E_\lambda}\mathop{\otimes}^{\mathbf L}_{
       \O_{\Gr_\lambda\times N_{n,r}((t))}}\O_{\T_\lambda}.
\end{equation}
This is the Koszul complex for the difference of the two bundle
inclusions. Even when the ordinary intersection is
$\R_\lambda^{\cl}=\T_\lambda$, its excess Tor terms need not
vanish.

\subsection{Classical loci over minuscule orbits}

For $1\leq k<n$ and $S\subseteq\{1,\ldots,r\}$ set
$\lambda_{k,S}=\omega_k-\sum_{a\in S}\gamma_a$, and put
$\nu_k=(0^{n-k},-1^k;0,\ldots,0)$. The classical loci $\R_\lambda^{\cl}$ are
described by the following table; the last column gives the $a$th
summand of the fiber of $\R_\lambda^{\cl}\to\Gr_\lambda$.
\[
\begin{array}{c|c|c|c}
 \lambda&\text{condition on }\Lambda=L&L_a&
                      a\text{th summand}\\ \hline
 \omega_k&tL_0\subset L\subset L_0, \dim(L_0/L)=k&\O&L\\
 \nu_k&L_0\subset L\subset t^{-1}L_0, \dim(L/L_0)=k&\O&L_0\\
 \gamma_b&L=L_0&t^{\delta_{ab}}\O&t^{\delta_{ab}}L_0\\
 -\gamma_b&L=L_0&t^{-\delta_{ab}}\O&L_0\\
 \lambda_{k,S}&tL_0\subset L\subset L_0, \dim(L_0/L)=k&
 t^{-\mathbf1_{a\in S}}\O&
 \begin{cases}L_0&a\in S,\\L&a\notin S.\end{cases}
\end{array}
\]
All these loci are bundles of constant rank after a finite lattice
quotient. In particular their ordinary intersections are reduced.
For the last row one may equivalently use $L^-=t^{-1}L$ and the
ideals $\O$ on $S$ and $t\O$ off $S$. Then
$L_0\subset L^-\subset t^{-1}L_0$, with
$\dim(L^-/L_0)=n-k$; the $a$th summand of the fiber is $L_0$ for $a\in S$ and
$tL^-$ for $a\notin S$.

The exchange sequences \eqref{eq:cat-outer-first} and
\eqref{eq:cat-outer-other} use $\lambda_{n-1,\{a\}}$.
Its $GL_n$ Schubert variety is $\Gr_{\omega_{n-1}}$, and the
equivalent lattice $L^-$ satisfies $\dim(L^-/L_0)=1$.

\section{The Compatible Pair and the Initial Toric Frame}
\label{sec:compatible-pair}
\label{sec:gln-seed}

Fix $n\geq2$ and $r\geq1$, and use the coordinates
$(A,h_2,\ldots,h_r)$ on $G_{n,r}$ from Section~\ref{sec:group}.
The framed star quiver has central gauge dimension $n$, outer gauge
dimensions one, and one incoming framing of dimension one.
We construct its integral compatible pair and identify the resulting
upper quantum cluster algebra with the Coulomb-branch algebra.

\subsection{The lattice and parameters}
\label{sec:SS-input}

The construction of \cite[Eqs.~(5.1) and (5.6)]{SS19} gives the
following lattice for $G_{n,r}$:
\[
 \mathbb L_{n,r}=
 \bigoplus_{i=1}^n(\mathbb Zp_i\oplus\mathbb Zx_i)
 \oplus\bigoplus_{a=2}^r
          (\mathbb Z\bar p_a\oplus\mathbb Z\bar x_a),
\]
with $(p_i,x_j)=\delta_{ij}$ and
$(\bar p_a,\bar x_b)=\delta_{ab}$; all other independent pairings
are zero.  Here $p$ records a character and $x$ a coweight.
The full flavor-equivariant lattice is
$\mathbb L_{n,r}^{\mathrm{full}}=\mathbb L_{n,r}\oplus\mathbb Z\zeta_f$,
where $\zeta_f$ is the framing character and lies in the radical.
There are no cycle parameters because the underlying graph is a tree.
Equivalently, if $P_Q$ is the weight lattice of the diagonal gauge
torus, then $\mathbb L_{n,r}=P_Q\oplus P_Q$, with
\begin{equation}
\label{eq:SS-skew-form}
 (p_\lambda,x_\mu)=\langle\lambda,\mu\rangle
 =-(x_\mu,p_\lambda),\qquad
 (p_\lambda,p_\mu)=(x_\lambda,x_\mu)=0.
\end{equation}
The dot product identifies weights and coweights in the chosen
$GL$ coordinates.

The scalar conventions are summarized in Table \ref{tab:comparison-parameters};
$\chi_{\mathrm{rot}}$ is the character of the double rotation cover fixed in
Section \ref{sec:loop-rotation-cover}.
\begin{table}[htb]
\centering
\begin{tabular}{@{}lll@{}}
parameter & role & identification\\
\hline
$\chi_{\mathrm{rot}}$ & geometric rotation character & $\vloop^{-1}$\\
$q_{\mathrm{SS}}$ & parameter in the quantum dilogarithm & $\chi_{\mathrm{rot}}$\\
$u$ & factor in the torus multiplication & $(-q_{\mathrm{SS}})^{-1}=-\vloop$\\
$q_{\mathrm{BZ}}$ & torus parameter in \cite{BZ05} & $\vloop$
\end{tabular}
\caption{Parameters in the comparison of quantum tori.}
\label{tab:comparison-parameters}
\end{table}

The Schrader--Shapiro torus is the free
$\mathbb Z[v^{\pm1}]$-module with basis $Y_\lambda^{\mathrm{SS}}$,
for $\lambda\in\mathbb L_{n,r}^{\mathrm{full}}$, and multiplication
\[
 Y_\lambda^{\mathrm{SS}}Y_\mu^{\mathrm{SS}}
 =(-v)^{(\lambda,\mu)}Y_{\lambda+\mu}^{\mathrm{SS}}.
\]
This is \cite[Eq.~(3.1)]{SS19} after
$q_{\mathrm{SS}}=v^{-1}$.  For
$\lambda=\sum_i(a_ip_i+b_ix_i)+
\sum_{a=2}^r(c_a\bar p_a+d_a\bar x_a)+c_f\zeta_f$, put
\begin{equation}
\label{eq:SS-signed-basis}
 Q_{n,r}(\lambda)=\sum_i a_ib_i+\sum_{a=2}^r c_ad_a\pmod2,
 \qquad Y_\lambda=(-1)^{Q_{n,r}(\lambda)}Y_\lambda^{\mathrm{SS}}.
\end{equation}
Polarization gives the multiplication law
\begin{equation}
\label{eq:SS-parameter-change}
 Y_\lambda Y_\mu=v^{(\lambda,\mu)}Y_{\lambda+\mu}.
\end{equation}
A quadratic refinement intertwines mutation only when it is odd
on the mutable directions, as in Lemma~\ref{lem:dilog-transport}.
The refinement $Q_{n,r}$ corrects the initial torus product; the
refinement used in Proposition~\ref{prop:gln-simple-frame} is specified below.

\subsection{The integral compatible pair}

Order the mutable indices as
$u_1,\ldots,u_r,y_1,\ldots,y_{n-1},z_1,\ldots,z_{n-1}$ and the
frozen indices as $d,h_2,\ldots,h_r$.  Define
\begin{equation}
\label{eq:gln-A-basis}
\begin{aligned}
 \alpha_{u_1}&=\sum_{i=1}^nx_i+\sum_{a=2}^r\bar x_a,&
 \alpha_{u_a}&=-\bar x_a &&(a\geq2),\\
 \alpha_{y_k}&=\sum_{i=1}^k(p_i+x_i),&
 \alpha_{z_k}&=\sum_{i=1}^kx_i &&(1\leq k<n),\\
 \alpha_d&=\sum_{i=1}^np_i,&
 \alpha_{h_a}&=\bar p_a &&(a\geq2).
\end{aligned}
\end{equation}
Use the boundary notation
\begin{equation}
\label{eq:gln-boundary-vectors}
 \alpha_{y_0}=\alpha_{z_0}=\alpha_{h_1}=0,\qquad
 \alpha_{z_n}=\sum_a\alpha_{u_a},\qquad
 \alpha_{y_n}=\alpha_d+\alpha_{z_n}.
\end{equation}

\begin{theorem}
\label{thm:gln-compatible-pair}
The vectors \eqref{eq:gln-A-basis} form an integral basis of
$\mathbb L_{n,r}$.  Let $\widetilde B_{n,r}=(b_{ji})$ be the matrix
whose columns are the coordinates in this basis of
\begin{equation}
\label{eq:gln-exchange-vectors}
\begin{aligned}
 e_{u_a}&=\alpha_d+\alpha_{z_{n-1}}-\alpha_{y_{n-1}}
                                      -\alpha_{h_a},\\
 e_{y_k}&=\alpha_{z_{k-1}}-2\alpha_{z_k}+\alpha_{z_{k+1}},\\
 e_{z_k}&=2\alpha_{y_k}-\alpha_{y_{k-1}}-\alpha_{y_{k+1}}.
\end{aligned}
\end{equation}
Then $\Lambda_{n,r}=2((\alpha_i,\alpha_j))$ is even,
the principal part of $\widetilde B_{n,r}$ is skew-symmetric, and
\begin{equation}
\label{eq:gln-compatibility}
 \widetilde B_{n,r}^{\mathsf T}\Lambda_{n,r}
      =\bigl(2I_{2n+r-2}\ \ 0\bigr).
\end{equation}
The mutable directions span a primitive sublattice.
\end{theorem}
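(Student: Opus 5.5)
The plan is to reduce everything to a single family of pairing identities in $\mathbb L_{n,r}$. The target is
\[
 (e_k,\alpha_j)=\delta_{kj}\qquad(k\in I_{\mathrm{ex}},\ j\in I),
\]
read with frozen $j$ giving $0$. Since $e_k=\sum_i b_{ik}\alpha_i$ by definition of $\widetilde B_{n,r}$, we have $(\widetilde B^{\mathsf T}\Lambda)_{kj}=2\sum_i b_{ik}(\alpha_i,\alpha_j)=2(e_k,\alpha_j)$, so these identities are exactly \eqref{eq:gln-compatibility}. They also give skew-symmetry of the principal part: for mutable $j,k$,
\[
 (e_j,e_k)=\sum_i b_{ik}(e_j,\alpha_i)=b_{jk},
\]
and $(\ ,\ )$ is skew. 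Evenness of $\Lambda_{n,r}$ is immediate from the factor $2$ and integrality of the form.

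\emph{Step 1: unimodularity.} There are $r+2(n-1)+1+(r-1)=2n+2r-2$ vectors, matching the rank of $\mathbb L_{n,r}$. I would invert explicitly, recovering each original basis vector as an integer combination of the $\alpha$'s:
\begin{itemize}
 \item $\bar x_a=-\alpha_{u_a}$ and $\bar p_a=\alpha_{h_a}$ for $a\geq2$;
 \item $x_k=\alpha_{z_k}-\alpha_{z_{k-1}}$ and $p_k=(\alpha_{y_k}-\alpha_{y_{k-1}})-x_k$ for $k<n$;
 \item $x_n=\alpha_{u_1}+\sum_{a\geq2}\alpha_{u_a}-\alpha_{z_{n-1}}$;
 \item $p_n=\alpha_d-(\alpha_{y_{n-1}}-\alpha_{z_{n-1}})$.
\end{itemize}
This shows the $\alpha$'s form an integral basis.

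\emph{Step 2: bookkeeping.} Put $X_k=\sum_{i\le k}x_i$ and $P_k=\sum_{i\le k}p_i$. Then $\alpha_{z_k}=X_k$ and $\alpha_{y_k}=P_k+X_k$. The key observation is that the boundary conventions \eqref{eq:gln-boundary-vectors} continue this pattern: $\alpha_{z_n}=\sum_a\alpha_{u_a}=X_n$, because the $\bar x_a$ cancel, and hence $\alpha_{y_n}=P_n+X_n$. Using $(P_k,X_l)=\min(k,l)$ and the vanishing of all other pairings among the $P$'s and $X$'s, one gets
\[
 (\alpha_{y_k},\alpha_{z_l})=\min(k,l),\qquad
 (\alpha_{y_k},\alpha_{y_l})=(\alpha_{z_k},\alpha_{z_l})=0 .
\]
Also $e_{u_a}=p_n-\bar p_a$, with the convention $\bar p_1=0$.

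\emph{Step 3: the pairing checks.}
\begin{itemize}
 \item For $e_{y_k}$ and $e_{z_k}$, the pairings with the $\alpha_{y_l}$ and $\alpha_{z_l}$ are second differences of $\min(k,l)$ in $k$. These equal $-\delta_{kl}$, and the sign conventions turn this into $+\delta_{kl}$. For example,
 \[
  (e_{y_k},\alpha_{y_l})=-\bigl(\min(k-1,l)-2\min(k,l)+\min(k+1,l)\bigr)=\delta_{kl},
 \]
 valid up to $k=n-1$ precisely because of the boundary extension in Step 2.
 \item The pairings of $e_{y_k},e_{z_k}$ with $\alpha_{u_a},\alpha_d,\alpha_{h_a}$ vanish or cancel, again through the identities $\sum_a\alpha_{u_a}=X_n$ and $\alpha_d=P_n$.
 \item For $e_{u_a}=p_n-\bar p_a$, one checks directly that $(e_{u_a},\alpha_{u_1})=1-\delta_{a\ne1}\cdot1=\delta_{a1}$ and $(e_{u_a},\alpha_{u_b})=\delta_{ab}$ for $b\ge2$. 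All pairings with $\alpha_{y_l}$, $\alpha_{z_l}$, $\alpha_d$, $\alpha_{h_b}$ vanish, since $e_{u_a}$ involves only $p$'s and every $\alpha_{z_l}$ with $l<n$ omits $x_n$.
\end{itemize}

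\emph{Step 4: primitivity.} Suppose $\lambda\in\mathbb L_{n,r}$ lies in the $\mathbb Q$-span of the $e_k$, say $\lambda=\sum c_ke_k$. Then $c_j=(\lambda,\alpha_j)\in\Z$ by the identities of Step 3, so $\lambda$ already lies in the $\Z$-span of the $e_k$.

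The main obstacle is purely combinatorial bookkeeping in Step 3, specifically the boundary cases $k=1$, $k=n-1$ and the interaction of the $u$-directions with $y_{n-1},z_{n-1}$. I would handle these uniformly by first proving the boundary identities $\alpha_{z_n}=X_n$ and $\alpha_{y_n}=P_n+X_n$, so that no separate case analysis at the ends is needed.
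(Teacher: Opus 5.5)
Your proposal is correct and follows essentially the same route as the paper: invert the change of basis explicitly, rewrite the exchange vectors in Darboux coordinates, verify the duality $(e_k,\alpha_j)=\delta_{kj}$ (zero for frozen $j$), and read off compatibility, skew-symmetry and primitivity from it. Your $\min(k,l)$ bookkeeping just makes the paper's \emph{direct pairing} step explicit, and your saturation argument for primitivity is equivalent to the paper's retraction $\lambda\mapsto\sum_i(\lambda,\alpha_i)e_i$.
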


\begin{proof}
The inverse change of basis is integral:
\begin{align*}
 x_i&=\alpha_{z_i}-\alpha_{z_{i-1}},&
 p_i&=(\alpha_{y_i}-\alpha_{z_i})
          -(\alpha_{y_{i-1}}-\alpha_{z_{i-1}}),\\
 \bar x_a&=-\alpha_{u_a},& \bar p_a&=\alpha_{h_a}.
\end{align*}
The boundary conventions apply also when $i=n$.  In Darboux
coordinates the exchange directions become
\begin{equation}
\label{eq:gln-Darboux-directions}
\begin{aligned}
 e_{u_1}&=p_n,& e_{u_a}&=p_n-\bar p_a\quad(a\geq2),\\
 e_{y_k}&=x_{k+1}-x_k,&
 e_{z_k}&=p_k-p_{k+1}+x_k-x_{k+1}.
\end{aligned}
\end{equation}
These are the framing, arrow, and simple-root directions of
\cite[Definition~5.1]{SS19}.  Direct pairing gives
\begin{equation}
\label{eq:gln-duality}
 (e_i,\alpha_j)=\delta_{ij}\quad(j\text{ mutable}),\qquad
 (e_i,\alpha_j)=0\quad(j\text{ frozen}).
\end{equation}
It follows that $b_{ij}=(e_i,e_j)$ on the principal part and that
\eqref{eq:gln-compatibility} holds.  The map
$\lambda\mapsto\sum_i(\lambda,\alpha_i)e_i$, with $i$ mutable,
retracts the lattice onto the span of the $e_i$, proving primitivity.
\end{proof}

Let $\mathcal T_+$ have based multiplication
$Z_\lambda Z_\mu=v^{(\lambda,\mu)}Z_{\lambda+\mu}$, and let
$\mathcal A_{n,r}\subseteq\mathcal U_{n,r}$ be the quantum cluster
and upper quantum cluster algebras of this pair, with frozen
variables inverted.  For
$\beta_i^\pm=\sum_j[\pm b_{ji}]_+\alpha_j$, the initial exchange
has shifts
\begin{equation}
\label{eq:gln-exchange-shifts}
 \begin{gathered}
 Z_{\alpha_i}Z_{\alpha_i}'
 =v^{s_i^+}Z_{\beta_i^+}+v^{s_i^-}Z_{\beta_i^-},\qquad
 s_i^\pm=(\alpha_i,\beta_i^\pm),\\
 \begin{array}{c|rr}
 i&s_i^+&s_i^-\\\hline
 u_1&-n&1-n\\
 u_a\ (a\geq2)&0&1\\
 y_k&2k-1&2k\\
 z_k&-2k&1-2k
 \end{array}
 \end{gathered}
\end{equation}
In particular, $s_i^+-s_i^-=-1$ in every direction.

\subsection{The Whittaker isomorphism}

In the full lattice, the directions of \cite[Definition~5.1]{SS19}
are
\begin{equation}
\label{eq:SS-full-directions}
\begin{aligned}
 e_{u_1}^{\mathrm{full}}&=p_n-\zeta_f,&
 e_{u_a}^{\mathrm{full}}&=p_n-\bar p_a\quad(a\geq2),\\
 e_{y_k}^{\mathrm{full}}&=x_{k+1}-x_k,&
 e_{z_k}^{\mathrm{full}}&=p_k-p_{k+1}+x_k-x_{k+1}.
\end{aligned}
\end{equation}
The first row comes from the framing and gauge arrows; the second
comes from the $n-1$ simple roots of the central gauge node.
Write $\mathbf e_{n,r}^{\mathrm{full}}$ for this ordered list and
$\mathbf e_{n,r}$ for its specialization at $\zeta_f=0$.
The integral change $p'_i=p_i-\zeta_f$,
$\bar p'_a=\bar p_a-\zeta_f$ identifies the full list with
$\mathbf e_{n,r}$ in a direct summand.  Thus both are primitive by
Theorem~\ref{thm:gln-compatible-pair}.

Let $\mathbb T_{\mathrm{SS}}^{\mathrm{full}}$ be the full torus and
$\mathbb T_{\mathrm{SS}}$ the torus obtained by setting $Y_{\zeta_f}=1$.  For each seed
use the inverse-image charts and the finite intersection
$\mathcal L^{\mathrm{SS}}_{\mathbf e}$ defined in
\eqref{eq:SS-chart-pullback}--\eqref{eq:SS-Laurent-bound}.
Set
\[
 \mathcal K_Q^{\mathrm{full}}
 :=K^{(G_{Q,\mathcal O}\rtimes\mathbb C^\times_{\mathrm{rot}})
                 \times F_Q}(\mathcal R_Q^{\mathrm{cl}}).
\]
The algebraic Whittaker transform is the product of the pure-gauge
Whittaker transforms for $GL_n$ and the $r-1$ outer $GL_1$ factors,
extended over the representation ring of the flavor torus.  On the Toda generators
it is characterized by the dressed minuscule monopole formulas
\cite[Eqs.~(4.20)--(4.25)]{SS19}.  We write
$\mathsf W_Q^{\mathrm{SS}}$ for this transform with
$q_{\mathrm{SS}}=v^{-1}$ and with the rotation pullback
\eqref{eq:SS-rotation-centre-comparison} included.

\begin{theorem}[Schrader--Shapiro, {\cite[Thm.~5.13]{SS19}}]
\label{thm:SS-whittaker}
For every $n\geq2$ and $r\geq1$, the algebraic Whittaker transform
restricts to an isomorphism over the representation ring of the
loop-rotation and flavor tori:
\[
 (\mathsf W_Q^{\mathrm{SS}})^{\mathrm{full}}:
 \mathcal L^{\mathrm{SS}}_{\mathbf e_{n,r}^{\mathrm{full}}}
 \xrightarrow{\sim}\mathcal K_Q^{\mathrm{full}}.
\]
\end{theorem}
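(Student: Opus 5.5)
The plan is to deduce the statement from \cite[Thm.~5.13]{SS19} rather than reprove it, by checking that the framed star-shaped quiver of Section~\ref{sec:group} fits their setup and that our conventions translate exactly into theirs. First I would record that the quiver $f\to i\leftarrow j_a$ has gauge group $G_Q=GL(V_i)\times\prod_{a\geq2}GL(V_{j_a})$, flavor group $F_Q=GL(W_f)$ and polarization $N_{n,r}$, all as in \cite[Sec.~5]{SS19}. Its underlying graph is a tree, so their lattice \cite[Eqs.~(5.1), (5.6)]{SS19} has no cycle parameters and equals $\mathbb L_{n,r}^{\mathrm{full}}$ with the form \eqref{eq:SS-skew-form}. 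Next I would check that the list \eqref{eq:SS-full-directions} is literally their Definition~5.1, with one framing direction, $r-1$ arrow directions and $n-1$ simple-root directions. Primitivity, needed for their Laurent-chart argument, was shown in Theorem~\ref{thm:gln-compatible-pair} after the integral change $p'_i=p_i-\zeta_f$.

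The second step is matching parameters and rotation. Their statement is phrased with the action $(g,c)\cdot s(t)=g\,c\,s(c^2t)$ and their parameter $q_{\mathrm{SS}}$. I would pull back along $\psi$ of \eqref{eq:SS-rotation-centre-comparison}. This gives a monoidal equivalence of the convolution diagrams, hence a ring isomorphism of $K$-theoretic Coulomb branches. On characters it acts by $q_{\mathrm{SS}}\mapsto v^{-1}$ and $w_i\mapsto v w_i$, and it is compatible with the double-cover identification \eqref{eq:rotation-cover-base-change}. Composing their Whittaker transform with this pullback defines $(\mathsf W^{\mathrm{SS}}_Q)^{\mathrm{full}}$. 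The dressed minuscule monopole formulas \cite[Eqs.~(4.20)--(4.25)]{SS19} transform accordingly, so this map is still determined on Toda generators as the product of the $GL_n$ and $GL_1$ pure-gauge transforms, extended over $\mathrm{Rep}(F_Q)$.

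The third step concerns the chart convention, which I expect to be the main obstacle. The source defines mutated charts by forward images under $\operatorname{Ad}_{\Psi}$, but its proofs (Lem.~4.2, Sec.~5.4) use the condition that $\theta_k(F)$ is Laurent. I would reread those proofs and confirm that they establish exactly two facts. The first is that the Whittaker image of each dressed monopole satisfies the inverse-image Laurent conditions \eqref{eq:SS-Laurent-bound}. The second is that, conversely, every element of the finite intersection $\mathcal L^{\mathrm{SS}}_{\mathbf e}$ lies in the image, which they prove through the generation of the Coulomb branch by dressed minuscule monopoles \cite[Prop.~2.3]{SS19}. Taken together, these give injectivity, since the Whittaker transform is an embedding into the torus, and surjectivity onto $\mathcal K_Q^{\mathrm{full}}$. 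The delicate point is that the one-step intersection over the initial seed is the set they actually characterize. I would verify this against their use of only single mutations from the initial seed. Any sign discrepancy in $Y^{\mathrm{SS}}$ versus the $(-v)$-twisted product is inert at this stage, because the statement is made in the SS torus itself.
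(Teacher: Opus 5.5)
Your route is the same as the paper's. Both of you treat the statement as a citation of \cite[Thm.~5.13]{SS19}, then check three things: that the framed star quiver meets their standing hypotheses, that \eqref{eq:SS-full-directions} is their seed, and that their Laurentness conditions are the inverse-image ones of \eqref{eq:SS-chart-pullback}.

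\textbf{Missing integrality check.} Your plan omits a check that the paper's proof makes explicitly, and it is part of what the statement asserts. The isomorphism is claimed over the representation ring of the rotation and flavor tori, that is, integrally over $\mathbb Z[q_{\mathrm{SS}}^{\pm1}]$ and $\operatorname{Rep}(F_Q)$. Theorem~\ref{thm:gln-upper-Coulomb} later needs exactly this over $\mathbb Z[\vloop^{\pm1}]$. The paper verifies two points:
\begin{itemize}
\item the source tori of \cite[Eq.~(3.1)]{SS19} and the cited isomorphism are defined over that ring;
\item the conjugations by $\Psi_q$ are only rational automorphisms, but their binomial factors are not inverted in the Laurent intersection or in the Coulomb branch.
\end{itemize}
None of your three steps mentions the coefficient ring. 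As written, your citation could yield the isomorphism only after extending scalars to a field.

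\textbf{Hypotheses and counting.} You should state the specific hypotheses of \cite[Sec.~2.1]{SS19}: no one-cycles, a single flavor vertex of valency one, and no arrows between flavor vertices. Your tree observation covers only the first. Also, the central node contributes $2(n-1)$ directions $e_{y_k},e_{z_k}$, two per simple root, not $n-1$.

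\textbf{Step 3 reverses the role of generation.} Write $\mathsf W:=(\mathsf W_Q^{\mathrm{SS}})^{\mathrm{full}}$ and $\mathcal L:=\mathcal L^{\mathrm{SS}}_{\mathbf e_{n,r}^{\mathrm{full}}}$.
\begin{itemize}
\item Your first fact, combined with generation of the Coulomb branch by dressed minuscule monopoles \cite[Prop.~2.3]{SS19}, gives $\mathsf W^{-1}(\mathcal K_Q^{\mathrm{full}})\subseteq\mathcal L$. This is surjectivity of $\mathsf W$.
\item Generation says nothing about an arbitrary element of $\mathcal L$. So it cannot by itself give the opposite inclusion $\mathsf W(\mathcal L)\subseteq\mathcal K_Q^{\mathrm{full}}$, which says the map is well defined.
\item That inclusion is the substantive content of \cite[Thm.~5.13]{SS19} and must be taken from their argument.
\end{itemize}
Since you cite rather than reprove, this does not break the deduction. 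But the account of their proof that you intend to confirm should be corrected.
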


\begin{proof}
This is \cite[Thm.~5.13]{SS19} for the seed
\eqref{eq:SS-full-directions}. The quiver has no one-cycles,
its single flavor vertex has valency one, and there are no arrows
between flavor vertices, as required in \cite[Sec.~2.1]{SS19}.
With the inverse-image chart convention of
\eqref{eq:SS-chart-pullback}, the Laurentness conditions agree with
those used in the proofs of \cite[Lem.~4.2 and
Thm.~5.13]{SS19}. The coefficient ring is integral:
the source tori are defined over $\mathbb Z[q_{\mathrm{SS}}^{\pm1}]$
in \cite[Eq.~(3.1)]{SS19}, and the cited Whittaker isomorphism
is taken over that ring and the representation ring of the flavor torus.
The formal dilogarithms define rational automorphisms; their
binomial factors are not inverted in the resulting Laurent
intersection or Coulomb-branch algebra.
\end{proof}

\subsection{Normalization and signs}
\label{sec:gln-signs-upper}

For the shifts in \eqref{eq:integral-component-transport}, write
\[
 c([\lambda])=(n-1)\operatorname{val}_t\det A.
\]
The resulting Koszul-perverse heart $\mathcal C_{n,r}$ has integral
cohomological and loop degrees, with central cocharacter $\eta(c)=(cI_n,1,\ldots,1)$.
Put $\kappa_k=\binom{k}{2}$.
If $Q_k=L_0/L$ is the tautological quotient on $\Gr_{\omega_k}$
and $i_{\omega_k}$ is the immersion of
$\mathcal R_{\omega_k}^{\mathrm{cl}}$ into the derived space of triples, then
\begin{equation}
\label{eq:gln-positive-simple}
 \mathcal P_{\omega_k,\ell}=
 (i_{\omega_k})_*
 \bigl(\mathcal O_{\mathcal R_{\omega_k}^{\mathrm{cl}}}
          \otimes(\det Q_k)^\ell\bigr)
       [-\kappa_k-k\ell]\{\kappa_k\}.
\end{equation}
Indeed, the formal shifts in \cite[equation~(8.3)]{CW23} are
$[k(n-k)/2-k\ell]\{-k(n-k)/2\}$, and
\eqref{eq:integral-component-transport} subtracts $k(n-1)/2$ from the
cohomological degree.  Set
\[
 \mathsf U_a=\mathcal P_{\gamma_a,0},\qquad
 \mathsf Y_k=\mathcal P_{\omega_k,1},\qquad
 \mathsf Z_k=\mathcal P_{\omega_k,0}.
\]
The objects $\mathsf U_a$, $\mathsf D$, and $\mathsf H_a$ are normalized by
\begin{align*}
 \mathsf U_1&=(i_{\gamma_1})_*
  \mathcal O_{\mathcal R_{\gamma_1}^{\mathrm{cl}}}
                       [-\kappa_n]\{\kappa_n\},\\
 \mathsf U_a&=(i_{\gamma_a})_*
  \mathcal O_{\mathcal R_{\gamma_a}^{\mathrm{cl}}}\quad(a\geq2),\\
 \mathsf D&=(i_0)_*
  (\mathcal O_{\mathcal R_0^{\mathrm{cl}}}\otimes\det A)[-n]\{n\},\\
 \mathsf H_a&=(i_0)_*
  (\mathcal O_{\mathcal R_0^{\mathrm{cl}}}\otimes h_a).
\end{align*}

Put $q=v^{-1}$.  The Schrader--Shapiro torus has multiplication
$Y_\lambda^{\mathrm{SS}}Y_\mu^{\mathrm{SS}}
=(-v)^{(\lambda,\mu)}Y_{\lambda+\mu}^{\mathrm{SS}}$.
For
$\lambda=\sum_i(a_ip_i+b_ix_i)+
\sum_{a=2}^r(c_a\bar p_a+d_a\bar x_a)$, define
\begin{equation}
\label{eq:gln-quadratic-refinement}
 R_{n,r}(\lambda)=
 \sum_i a_ib_i+\sum_{a=2}^r c_ad_a
 +\sum_i a_i+\sum_i(i-1)b_i\pmod2.
\end{equation}

\begin{proposition}
\label{prop:gln-odd-refinement}
The function $R_{n,r}$ is a quadratic refinement which is odd on
every mutable direction of every seed.  Its initial values are
\begin{equation}
\label{eq:gln-initial-parities}
 \begin{array}{c|cccccc}
 i&u_1&u_a\ (a\geq2)&y_k&z_k&d&h_a\\\hline
 R_{n,r}(\alpha_i)&\kappa_n&0&\kappa_k&\kappa_k&n&0
 \end{array}\pmod2.
\end{equation}
\end{proposition}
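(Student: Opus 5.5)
The plan is to verify the two defining properties of $R_{n,r}$ directly in the Darboux coordinates $p_i,x_i,\bar p_a,\bar x_a$. Oddness at later seeds will then follow from Lemma~\ref{lem:odd-refinement-mutation}.

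\emph{Polarization.} Write $R_{n,r}=Q_{n,r}+\ell$, where $Q_{n,r}$ is the quadratic part \eqref{eq:SS-signed-basis} and $\ell(\lambda)=\sum_ia_i+\sum_i(i-1)b_i$. The form $\ell$ is linear, so it contributes nothing to the defect $R(\lambda+\mu)-R(\lambda)-R(\mu)$. For $Q_{n,r}$ this defect is $\sum_i(a_ib'_i+a'_ib_i)+\sum_a(c_ad'_a+c'_ad_a)$. By \eqref{eq:SS-skew-form}, $(\lambda,\mu)=\sum_i(a_ib'_i-b_ia'_i)+\sum_a(c_ad'_a-d_ac'_a)$, and this agrees with the defect modulo $2$. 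Hence \eqref{eq:refinement-polarization} holds, and $R_{n,r}$ is a quadratic refinement. The coordinate $\zeta_f$ lies in the radical and is set to zero, so it plays no role.

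\emph{Oddness at the initial seed.} Evaluate $R_{n,r}$ on the directions \eqref{eq:gln-Darboux-directions}, working modulo $2$ throughout.
\begin{itemize}
\item For $e_{u_1}=p_n$, only $a_n=1$ is nonzero, so $R=1$.
\item For $e_{u_a}=p_n-\bar p_a$, there is no $\bar x$-coordinate, so the $c_ad_a$ term vanishes and $R=a_n=1$.
\item For $e_{y_k}=x_{k+1}-x_k$, all $a_i$ vanish, and $R=k-(k-1)\equiv1$.
\item For $e_{z_k}$, the quadratic part is $1\cdot1+(-1)(-1)\equiv0$. The $a$-linear part is $1-1=0$, and the $b$-linear part is $(k-1)-k\equiv1$.
\end{itemize}
Thus $R_{n,r}$ is odd at $\mathbf e_{n,r}$. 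Every seed of the mutation pattern is reached by signed lattice mutations, so Lemma~\ref{lem:odd-refinement-mutation} propagates oddness to every mutable direction of every seed.

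\emph{Initial values.} Evaluate on the basis \eqref{eq:gln-A-basis}, again modulo $2$.
\begin{itemize}
\item For $\alpha_{u_1}$, all $b_i=1$ and all $d_a=1$, while the $a_i$ and $c_a$ vanish. So only the linear term contributes: $\sum_{i=1}^n(i-1)=\kappa_n$.
\item For $\alpha_{u_a}=-\bar x_a$, every term vanishes.
\item For $\alpha_{y_k}$, $a_i=b_i=1$ for $i\le k$. This gives $k+k+\kappa_k\equiv\kappa_k$.
\item For $\alpha_{z_k}$, only $b_i=1$ for $i\le k$ is nonzero, giving $\kappa_k$.
\item For $\alpha_d$, the value is $\sum_ia_i=n$.
\item For $\alpha_{h_a}=\bar p_a$, the value is $0$, because $d_a=0$ and the $c$-coordinates have no linear term.
\end{itemize}
This gives \eqref{eq:gln-initial-parities}.

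The only point needing care is that "every seed" means every seed reached from $\mathbf e_{n,r}$ by the signed lattice mutations of Lemma~\ref{lem:odd-refinement-mutation}, which are the mutations used in the chart definitions. Beyond that, the argument is bookkeeping.
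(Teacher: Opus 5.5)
Your proof is correct and follows the paper's argument. You polarize the bilinear part of $R_{n,r}$ against the skew form, evaluate on the directions \eqref{eq:gln-Darboux-directions} and on the basis \eqref{eq:gln-A-basis}, and propagate oddness to all seeds with Lemma~\ref{lem:odd-refinement-mutation}; your coordinate computations all check out.
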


\begin{proof}
Polarization of the bilinear terms in
\eqref{eq:gln-quadratic-refinement} gives the skew form modulo two.
Evaluation on \eqref{eq:gln-A-basis} gives the table.  The values
on $p_n$ and $p_n-\bar p_a$ are one.  On $x_{k+1}-x_k$ and
$p_k-p_{k+1}+x_k-x_{k+1}$ the last linear term is odd, while the
remaining terms are even.  Thus $R_{n,r}(e_i)=1$ initially, and
Lemma~\ref{lem:odd-refinement-mutation} proves mutation invariance.
\end{proof}

Define
\begin{equation}
\label{eq:gln-rho}
 \rho(Z_\lambda)=(-1)^{R_{n,r}(\lambda)}Y_\lambda^{\mathrm{SS}}.
\end{equation}
The quadratic-refinement identity makes $\rho$ an algebra isomorphism
of quantum tori.  As in the loop-rotation comparison in the preliminaries,
we include the change of splitting
$(g,c)\mapsto(g\eta(c^{-1}),c)$ in the Whittaker transform.
Thus a character of $\eta$-weight $k$ acquires a factor $v^k$.

\begin{proposition}
\label{prop:gln-simple-frame}
The Whittaker transform composed with $\rho$ sends the initial
variables to
$[\mathsf U_a],[\mathsf Y_k],[\mathsf Z_k],[\mathsf D],[\mathsf H_a]$
in their specified order.
\end{proposition}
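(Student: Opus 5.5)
We verify the claim variable by variable, comparing two explicit elements of $\mathcal K_Q$ in each case. On one side is $\mathsf W_Q^{\mathrm{SS}}(\rho(Z_{\alpha_i}))=(-1)^{R_{n,r}(\alpha_i)}\mathsf W_Q^{\mathrm{SS}}(Y^{\mathrm{SS}}_{\alpha_i})$. On the other side is the class of the corresponding normalized simple object, computed through the isomorphisms \eqref{eq:heart-K0} and \eqref{eq:derived-classical-K-comparison}. The signs $(-1)^{R_{n,r}(\alpha_i)}$ are read off from \eqref{eq:gln-initial-parities}. The approach is to match each $Y^{\mathrm{SS}}_{\alpha_i}$ with a dressed minuscule monopole operator of \cite[Eqs.~(4.20)--(4.25)]{SS19}, since these formulas characterize the Whittaker transform.

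\textbf{Frozen variables.} The vectors $\alpha_d=\sum_ip_i$ and $\alpha_{h_a}=\bar p_a$ are pure characters. Under the Whittaker transform they go to the characters $\prod_iw_i^{\mathrm{SS}}$ and $h_a$ of the trivial component. The rotation pullback $\psi$ turns $\prod_iw_i^{\mathrm{SS}}$ into $v^n\det A$. The object $\mathsf D$ carries the shift $[-n]\{n\}$, so its class is $(-1)^n v^n[\det A]$. This agrees with the sign $(-1)^{R(\alpha_d)}=(-1)^n$. For $\mathsf H_a$ there is no shift and no sign, so it matches directly.

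\textbf{Mutable variables.} The vectors $\alpha_{z_k}=\sum_{i\le k}x_i$ and $\alpha_{y_k}=\sum_{i\le k}(p_i+x_i)$ are the exponents of the minuscule monopole for $\omega_k$. The first is undressed; the second is dressed by $\det Q_k$. Likewise, $\alpha_{u_1}=\sum x_i+\sum\bar x_a$ and $\alpha_{u_a}=-\bar x_a$ correspond to the point orbits $\gamma_1$ and $\gamma_a$ under the coweight identifications of Section~\ref{sec:group}. Here we must use the relation $\kappa$ to rewrite $\gamma_1$ as $\omega_n-\sum_{a\ge2}\gamma_a$.

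For each of these vectors we evaluate the monopole formula by localization on the classical locus. The fibre descriptions in the table of Section~\ref{sec:group} give the Euler classes of the excess quotients $N_\K/(N_\O+gN_\O)$, which are products of factors $(1-\text{weight})$. These reproduce the Schrader--Shapiro matter factors. Next we compare normalizations. The cohomological shift $[-\kappa_k-k\ell]$ in \eqref{eq:gln-positive-simple} contributes the sign $(-1)^{\kappa_k+k\ell}$, and the loop shift $\{\kappa_k\}$ contributes $v^{\kappa_k}$. These must match two things:
\begin{enumerate}[label=\textup{(\roman*)}]
\item the sign $(-1)^{\kappa_k}$ from $R_{n,r}$, combined with the sign $(-1)^{Q_{n,r}}$ relating $Y^{\mathrm{SS}}$ to $Y$;
\item the powers of $v$ produced by the torus ordering in $Y^{\mathrm{SS}}_{\alpha}$, which equal $(-v)^{\binom{k}{2}}$ from reordering the $k$ commuting-up-to-scalar factors, together with the $\eta$-weight factors from $\psi$.
\end{enumerate}

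\textbf{Main obstacle.} We expect the bookkeeping for $\mathsf Y_k$ and $\mathsf U_1$ to be the hardest step. In these cases the dressing $\det Q_k$, or the $\eta$-weight $n$ carried by $\gamma_1$, produces extra powers $v^{k}$ or $v^{n}$. These have to cancel against the cohomological shift $-k\ell$ and against the $\mathfrak T_\beta$ correction $c_{n,r}(\beta)/2$. We would first settle the rank-one case $n=2$, $k=1$, where everything is explicit, and then do the general case by induction on $k$, using the factorization $\alpha_{z_k}=\alpha_{z_{k-1}}+x_k$.
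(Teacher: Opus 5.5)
Your frozen-variable check is correct and is the paper's own. Your overall strategy is also the paper's: read off each $Y^{\mathrm{SS}}_{\alpha_i}$ from the dressed minuscule monopole formulas and compare with the normalizing shifts. The gap is in the bookkeeping for the mutable variables, which is the whole content of the statement.

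In (ii) you attribute a factor $(-v)^{\binom{k}{2}}$ to reordering the factors of $Y^{\mathrm{SS}}_\alpha$. But $(x_i,x_j)=0$ and $(p_i+x_i,p_j+x_j)=0$ for all $i,j$. So $Y^{\mathrm{SS}}_{\alpha_{z_k}}$ and $Y^{\mathrm{SS}}_{\alpha_{y_k}}$ are products of exactly commuting factors, and no such scalar arises.

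The sign this factor would carry also breaks your comparison. Together with $(-1)^{R_{n,r}(\alpha_{z_k})}=(-1)^{\kappa_k}$ it gives total sign $+1$. But $[\mathsf Z_k]=(-1)^{\kappa_k}v^{\kappa_k}[\mathcal O]$ because of the shift $[-\kappa_k]\{\kappa_k\}$, so the check already fails for $\mathsf Z_2$ when $n\geq3$.

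What is needed is the actual output of \cite[Eqs.~(4.24)--(4.25)]{SS19} after pullback along $\psi$. The images of $Y^{\mathrm{SS}}_{\alpha_{z_k}}$ and $Y^{\mathrm{SS}}_{\alpha_{y_k}}$ are $v^{\kappa_k}[\mathcal O]$ and $(-1)^kv^{\kappa_k}[\mathcal O\otimes\det Q_k]$, with no factor $(-1)^{\kappa_k}$. The refinement $R_{n,r}$ supplies exactly that missing sign. The $(-1)^k$, which equals $(-1)^{Q_{n,r}(\alpha_{y_k})}$ (so your item (i) is fine in that reading), matches the extra cohomological shift $[-k]$. The analogous formula for $u_1$ gives $v^{\kappa_n}[\mathcal O]$, matched by $(-1)^{R_{n,r}(\alpha_{u_1})}=(-1)^{\kappa_n}$ and the shift $[-\kappa_n]\{\kappa_n\}$. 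For $\mathsf U_a$ with $a\geq2$, no sign is needed.

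Your fallback plans do not repair this.

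\begin{itemize}
\item A cohomological shift contributes only a sign, so a stray $v^k$ from the $\eta$-weight of $\det Q_k$ cannot cancel against $[-k\ell]$. Since $\mathcal P_{\omega_k,\ell}$ has loop shift $\{\kappa_k\}$ for both $\ell=0,1$, the Schrader--Shapiro image must already carry exactly $v^{\kappa_k}$. That is what has to be read off, not produced by cancellation.
\item The induction on $k$ through $\alpha_{z_k}=\alpha_{z_{k-1}}+x_k$ is unavailable. For $k\geq2$, $Z_{x_k}=Z_{\alpha_{z_k}}Z_{\alpha_{z_{k-1}}}^{-1}$ inverts a mutable variable and is not universally Laurent. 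So the Whittaker isomorphism gives it no meaning in $\mathcal K_{n,r}$, and $\mathsf Z_k$ is not a convolution of $\mathsf Z_{k-1}$ with an object of the heart. Each $k$ must be read off directly.
\item The matter contribution is trivial for a simpler reason than an excess-quotient Euler class. Every initial mutable coweight ($\omega_k$, $\alpha_{u_1}=\sum_ix_i+\sum_{a\geq2}\bar x_a$, $\alpha_{u_a}=-\bar x_a$) pairs nonnegatively with every weight of $N_{n,r}$. Hence $\mathcal R^{\mathrm{cl}}_\lambda=\mathcal T_\lambda$, and the $N$-factor in each localization coefficient is $1$. The derived Koszul terms never enter, since these simple objects are pushforwards of classical sheaves.
\end{itemize}
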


\begin{proof}
The initial mutable coweights have nonnegative pairing with every
weight of $N_{n,r}$, so the contribution of $N_{n,r}$ to each
localization coefficient is $1$. By \cite[equations~(4.24)--(4.25)]{SS19}, the
Whittaker images of $Y^{\mathrm{SS}}_{\alpha_{z_k}}$ and
$Y^{\mathrm{SS}}_{\alpha_{y_k}}$, after the change of splitting
$(g,c)\mapsto(g\eta(c^{-1}),c)$, are respectively
\[
 v^{\kappa_k}[\mathcal O],\qquad
 (-1)^kv^{\kappa_k}[\mathcal O\otimes\det Q_k].
\]
Formula~\eqref{eq:gln-positive-simple} requires the additional
sign $(-1)^{\kappa_k}$ in both cases.  The same calculation for
$u_1$ gives $(-1)^{\kappa_n}$; the variables $u_a$, $a\geq2$,
require no additional sign.  Finally the character images are $v^n\det A$
and $h_a$, requiring signs $(-1)^n$ and one.  These are exactly
\eqref{eq:gln-initial-parities}, with no residual powers of $v$.
\end{proof}

\subsection{The upper algebra}

Write $\mathcal L^{\mathrm{SS}}_{n,r}
=\mathcal L^{\mathrm{SS}}_{\mathbf e_{n,r}}$ and
$\mathcal K_{n,r}
=K^{G_{n,r,\mathcal O}\rtimes\mathbb C^\times_{\mathrm{rot}}}
 (\mathcal R_{G_{n,r},N_{n,r}}^{\mathrm{cl}})$.

\begin{theorem}
\label{thm:gln-upper-Coulomb}
There are isomorphisms over
$\mathbb Z[v^{\pm1}]$
\begin{equation}
\label{eq:gln-upper-Coulomb}
 \mathcal U_{n,r}\xrightarrow{\sim}\mathcal K_{n,r}
             \xleftarrow{\sim}K_0(\mathcal C_{n,r}),
\end{equation}
where $\mathcal K_{n,r}$ is the equivariant $K$-theory convolution
ring of the classical space of triples. The first isomorphism is induced
by the Whittaker transform composed with $\rho$. Only the $r$ frozen
character variables are inverted.
\end{theorem}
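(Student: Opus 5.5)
The plan is to compose three isomorphisms already available. First, Proposition~\ref{prop:v-upper-A-comparison}, applied to $\mathbb L_{n,r}$ with the basis $\boldsymbol\alpha$ of \eqref{eq:gln-A-basis}, identifies $\mathcal U_{n,r}$ with the finite Laurent intersection $\mathcal L^+_{\mathbf e_{n,r}}$ inside $\operatorname{Frac}(\mathcal T_+)$. Its hypotheses are supplied by Theorem~\ref{thm:gln-compatible-pair}: the duality \eqref{eq:gln-duality}, integrality of the basis, primitivity, and compatibility with $D=2I$. Second, Lemma~\ref{lem:dilog-transport} with $R=R_{n,r}$ transports $\mathcal L^+_{\mathbf e_{n,r}}$ isomorphically onto $\mathcal L^{\mathrm{SS}}_{\mathbf e_{n,r}}$ via $\rho$ of \eqref{eq:gln-rho}. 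This step is legitimate because Proposition~\ref{prop:gln-odd-refinement} shows that $R_{n,r}$ is a quadratic refinement that is odd at the initial seed. Third, Theorem~\ref{thm:SS-whittaker} identifies $\mathcal L^{\mathrm{SS}}_{\mathbf e^{\mathrm{full}}_{n,r}}$ with $\mathcal K_Q^{\mathrm{full}}$.

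The main obstacle is passing from the full, flavor-equivariant statement to the slice $f=1$ for the pair $(G_{n,r},N_{n,r})$. On the Coulomb side I would use the splitting \eqref{eq:hf-flavor-splitting}. Since $\C_f^\times$ acts trivially, $\mathcal K_Q^{\mathrm{full}}\cong\mathcal K_{n,r}\otimes\Z[\zeta_f^{\pm1}]$ as convolution rings, so specializing $\zeta_f\mapsto1$ yields $\mathcal K_{n,r}$.

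On the cluster side I would use the integral change of basis $p_i'=p_i-\zeta_f$, $\bar p'_a=\bar p_a-\zeta_f$, under which $\mathbf e^{\mathrm{full}}_{n,r}$ becomes $\mathbf e_{n,r}$ and $\zeta_f$ is a central radical direction. The full torus is then $\mathbb T_{\mathrm{SS}}\otimes\Z[Y_{\zeta_f}^{\pm1}]$, and each $\theta_k$ acts trivially on the central factor. The Laurent conditions can therefore be checked coefficientwise in $Y_{\zeta_f}$, giving $\mathcal L^{\mathrm{SS}}_{\mathbf e^{\mathrm{full}}}\cong\mathcal L^{\mathrm{SS}}_{\mathbf e}\otimes\Z[Y_{\zeta_f}^{\pm1}]$. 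The Whittaker map is linear over the flavor representation ring, so specialization commutes with it. I also need to confirm that $\rho$ and the rotation pullback $\psi$ of \eqref{eq:SS-rotation-centre-comparison} are compatible with this base change. This holds because $R_{n,r}$ does not involve $\zeta_f$ and $\psi$ only involves $\eta$.

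The second isomorphism, $K_0(\mathcal C_{n,r})\xrightarrow{\sim}\mathcal K_{n,r}$, is the composite of \eqref{eq:heart-K0} and \eqref{eq:derived-classical-K-comparison}, both established in Section~\ref{sec:heart-K-theory} as ring isomorphisms.

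Finally, frozen inversion needs a check. By Proposition~\ref{prop:gln-simple-frame}, the frozen variables map to the classes $[\mathsf D]$ and $[\mathsf H_a]$, which are characters $v^n\det A$ and $h_a$ and hence units in $\mathcal K_{n,r}$. The initial cluster variables go to the classes of the initial simple objects, so the composite is the map sending initial variables to initial simples. Only these $r$ characters are inverted in Proposition~\ref{prop:v-upper-A-comparison}, matching the statement.
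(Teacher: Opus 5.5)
Your proposal is correct and follows essentially the same route as the paper. Proposition~\ref{prop:v-upper-A-comparison}, combined with Lemma~\ref{lem:dilog-transport} through the oddness of $R_{n,r}$, gives $\rho(\mathcal U_{n,r})=\mathcal L^{\mathrm{SS}}_{n,r}$; the change of basis $p_i'=p_i-\zeta_f$, $\bar p_a'=\bar p_a-\zeta_f$, together with the geometric splitting of the trivially acting framing factor, lets you specialize Theorem~\ref{thm:SS-whittaker} at $Y_{\zeta_f}=1$; and \eqref{eq:heart-K0}, \eqref{eq:derived-classical-K-comparison}, and Proposition~\ref{prop:gln-simple-frame} complete the argument exactly as in the paper.
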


\begin{proof}
The oddness of $R_{n,r}$ and Lemma~\ref{lem:dilog-transport} identify the Laurent
charts of the two quantum tori through $\rho$.
Duality~\eqref{eq:gln-duality} supplies the hypothesis of
Proposition~\ref{prop:v-upper-A-comparison}, which gives
$\rho(\mathcal U_{n,r})=\mathcal L^{\mathrm{SS}}_{n,r}$ over
$\mathbb Z[v^{\pm1}]$.

The specialization of the framing character to one is split.
In the full framed lattice adjoin the radical generator $\zeta_f$
and replace $e_{u_1}$ by $p_n-\zeta_f$.  The integral change
$p'_i=p_i-\zeta_f$, $\bar p'_a=\bar p_a-\zeta_f$ expresses every
mutable direction without $\zeta_f$.  The full torus and every
Laurent chart therefore split off
$\mathbb Z[Y_{\zeta_f}^{\pm1}]$, as does their intersection
coefficient by coefficient.  On the geometric side the coordinates
\[
 (A,h_2,\ldots,h_r,f)\longmapsto
 (Af^{-1},h_2f^{-1},\ldots,h_rf^{-1},f)
\]
split off the trivially acting framing factor, with the same
Laurent character ring.  Theorem~\ref{thm:SS-whittaker}, followed by specialization at
$Y_{\zeta_f}=1$, gives the Whittaker isomorphism for $G_{n,r}$
\begin{equation}
\label{eq:SS-effective-whittaker}
 \mathsf W_Q^{\mathrm{SS}}:
 \mathcal L^{\mathrm{SS}}_{n,r}\xrightarrow{\sim}\mathcal K_{n,r}.
\end{equation}
The comparisons \eqref{eq:heart-K0} and
\eqref{eq:derived-classical-K-comparison}, with the weight-one
cocharacter $\eta$ and the shifts prescribed by $c$, give the
isomorphism $K_0(\mathcal C_{n,r})\to\mathcal K_{n,r}$.  Proposition~\ref{prop:gln-simple-frame}
identifies the initial generators.
\end{proof}

The sign on $Z_{\alpha_d}$ in \eqref{eq:gln-rho} is $(-1)^n$.

\begin{remark}
\label{rem:acyclic-small-rank}
For $n=2$, the mutable vertices are $u_a$, $y_1$, and $z_1$,
with arrows $u_a\to y_1\overset{2}{\longrightarrow}z_1\to u_a$.
The commuting mutations at $u_1,\ldots,u_r$ reverse all arrows incident to those vertices and
replace the signed multiplicity of $y_1\to z_1$ by $2-r$.
For $r=1,2$ the result is acyclic; for $r=3$ one further mutation at
$y_1$ gives the acyclic quiver $u_a\to y_1\to z_1$.
Thus \cite[Thm.~7.5]{BZ05} gives an independent proof of
(\ref{eq:unified-A-equals-U}) in these cases.

For $r=4$, mutations at $u_1,u_2,y_1$ give the tetrahedral
triangulation quiver of the four-punctured sphere, with arcs
$(u_1,u_2,u_3,u_4,y_1,z_1)=(AB,CD,AD,BC,AC,BD)$.
This is type $D_4^{(1,1)}$ \cite[Rem.~12.8]{FST08}.
Its principal matrix has rank two, preserved by mutation.
An acyclic representative would orient a six-vertex Dynkin or
extended Dynkin diagram \cite[Thm.~12.1]{FST08}; each such diagram
contains an induced four-vertex path, whose skew adjacency matrix
has determinant one, and hence has matrix rank at least four.
Thus no acyclic seed exists.
For $r\geq5$, mutation at $u_1,\ldots,u_r$ produces $r-2\geq3$ parallel
arrows, so the principal part is mutation-infinite
\cite[Thm.~2.6]{FSTFinite}.
\end{remark}

\section{The initial monoidal seed and its mutations}
\label{sec:initial-monoidal-seed}
\label{sec:category}

Let $\mathcal C_{n,r}$ be the Koszul-perverse heart of the space of
triples for $(G_{n,r},N_{n,r})$, with loop rotation pulled back to the
double cover and with the cohomological and loop shifts of \eqref{eq:integral-component-transport}. We
use the convolution structure of \cite{CW23}; in particular, all
pullbacks on the space of triples are taken in its stable
infinity-category of coherent sheaves. Under the hypotheses of Lemma~\ref{lem:cat-positive-kernel},
the convolution kernels are pushforwards from classical correspondences.

Write $V=\mathbb C^n$ and $L_0=V[[t]]$. We retain the
objects, shifts, and notation of \eqref{eq:gln-positive-simple}. On any closed minuscule orbit,
write $i_\lambda$ for the immersion of
$\mathcal R_\lambda^{\mathrm{cl}}$ into the space of triples. The simple associated with the trivial Levi representation is
\[
 \mathcal P_{\lambda,0}
 =i_{\lambda*}\mathcal O_{\mathcal R_\lambda^{\mathrm{cl}}}
 \bigl[(\dim\operatorname{Gr}_\lambda-c([\lambda]))/2\bigr]
 \bigl\{(c([\lambda])-\dim\operatorname{Gr}_\lambda)/2\bigr\}.
\]
This includes $\lambda=\omega_{n-1}-\gamma_a$.

For pairwise $v$-commuting real
simples, $\odot$ denotes the shift of their ordered convolution whose
class is the based quantum monomial. We also use this notation when an
invertible object associated with a one-dimensional $G_{n,r}$-representation,
with its prescribed shifts, is convolved with a simple object: the two commute
up to a unique loop shift, which defines the same normalization. We set
\begin{equation}
\label{eq:cat-boundary}
 \mathsf Z_0=\mathsf Y_0=\mathbf1,\qquad
 \mathsf Z_n=\bigodot_{a=1}^r\mathsf U_a,\qquad
 \mathsf Y_n=\mathsf D\odot\mathsf Z_n,\qquad
 \mathsf H_1=\mathbf1.
\end{equation}
Theorems~\ref{thm:CW-heart} and~\ref{thm:CW-monoidal-structure}
give a $\mathbb C$-linear finite-length rigid monoidal heart with
biexact convolution and homogeneous renormalized $r$-matrices.
The loop shift is an exact central monoidal action. Rigidity gives
separable triple products and nonzero $r$-matrices for nonzero
objects, by \cite[Cor.~4.7 and Prop.~4.9]{CW19}. These structures
are preserved by the monoidal equivalence
\eqref{eq:integral-component-transport} and satisfy the graded
axioms of Definition~\ref{def:homogeneous-r-matrices}.
Section~\ref{sec:convolution-correspondence} gives local finiteness and
$\operatorname{End}(S)=\mathbb C$ for every simple object $S$,
including the monoidal unit.

The objects $\mathsf D$ and $\mathsf H_a$ are invertible:
convolution adds characters, so the opposite characters with opposite
normalizing shifts give their inverses. Convolving on either side with
one of these objects preserves simplicity. Its nonzero homogeneous
$r$-matrix with any simple object is therefore an isomorphism.
Thus $\mathsf D$ and $\mathsf H_a$ are real and $v$-commute with every
simple object.

By Theorem~\ref{thm:gln-upper-Coulomb}, the Grothendieck ring is
$\mathcal U_{n,r}$ inside the skew field of fractions of the
initial quantum torus. In particular it is a domain, the coefficient map
$\mathbb Z[v^{\pm1}]\to K_0(\mathcal C_{n,r})$ is injective, and distinct initial based monomials have
distinct classes. Proposition~\ref{prop:gln-simple-frame} identifies
these classes with the specified initial objects.

\subsection{Convolution kernels on classical supports}

\begin{lemma}
\label{lem:cat-positive-kernel}
\label{lem:passive-block-derived-base-change}
\label{lem:positive-lattice-reality}
Suppose that the classical support of the first factor satisfies
$\mathcal R_{\lambda}^{\mathrm{cl}}=\mathcal T_{\lambda}$, where
$\mathcal T$ is the tautological bundle on the affine Grassmannian.
The pullback of the two classical supports to the derived convolution
correspondence is classical and is naturally equivalent to
\[
 \operatorname{Gr}_{\lambda}\widetilde\times
             \mathcal R_{\mu}^{\mathrm{cl}}.
\]
More precisely, write $d$ for the pullback map in the derived
convolution correspondence, $\widetilde m$ for its product map,
and $\cl$ for the product of the two classical support immersions.
Let $\cl'$ and $\widehat d$ be their derived base changes. For a
vector bundle $\mathcal L$ on the product of the classical supports,
\begin{equation}
\label{eq:passive-block-kernel-and-pushforward}
\begin{aligned}
 Ld^*\cl_*\mathcal L&\simeq\cl'_*\widehat d^*\mathcal L,\\
 R\widetilde m_*Ld^*\cl_*\mathcal L
   &\simeq R(\widetilde m\circ\cl')_*\widehat d^*\mathcal L.
\end{aligned}
\end{equation}
Thus the kernel is the pushforward of a bundle from the classical
correspondence into the derived correspondence.

If $\lambda$ and $\mu$ are nonnegative integral combinations of
$\omega_1,\ldots,\omega_n$ and $\gamma_1,\ldots,\gamma_r$, the conditions on
the vector in $N_{n,r}((t))$ depend only on the endpoint lattice. Thus the calculation
is the flat pullback of the corresponding affine-Grassmannian
convolution calculation along the endpoint tautological bundle.
In particular every $\mathcal P_{\omega_k,\ell}$,
$1\leq k<n$, $\ell\in\mathbb Z$, and every $\mathsf U_a$ is real.
Moreover $\bigodot_a\mathsf U_a=\mathcal P_{\omega_n,0}$ and
$\mathsf D\odot\mathcal P_{\omega_n,0}=\mathcal P_{\omega_n,1}$.
\end{lemma}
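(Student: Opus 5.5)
The plan is to work on a fixed finite lattice truncation, where every space in the convolution diagram of Section~\ref{sec:convolution-correspondence} is a finite-type derived scheme, and to show that the derived structure coming from the first factor is trivial. Since the first factor's classical support satisfies $\mathcal R_\lambda^{\cl}=\mathcal T_\lambda$, for $[g_1]\in\Gr_\lambda$ we have $g_1N_\O\subset N_\O$. The Koszul description \eqref{eq:component-koszul-resolution} records the excess quotient $N_\K/(N_\O+g_1N_\O)$ in negative degrees. This quotient vanishes here, so $\mathcal R_\lambda\to\mathcal T_\lambda$ is an equivalence of derived schemes, not only of classical truncations. Consequently the first-factor immersion in $\cl$ is an equivalence onto a smooth vector bundle over $\Gr_\lambda$.

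Next I would compute the fiber product defining $\widehat d$. On classical points the correspondence consists of $(g_1,g_2,s)$ with $s\in N_\O\cap g_1N_\O\cap g_1g_2N_\O$. Under $g_1N_\O\subset N_\O$ this reduces to $s\in g_1N_\O$ and $g_2^{-1}(g_1^{-1}s)\in N_\O$. In other words, it is the twisted product $\Gr_\lambda\widetilde\times\mathcal R_\mu^{\cl}$, where $g_1^{-1}s$ ranges over the second factor's support. To see that the derived base change is already classical, I would check Tor-independence. The map $d$ is the base change of the diagonal-type map $X\times_YX\to X\times X$ along $Y$. After the first factor is identified with the smooth bundle $\mathcal T_\lambda$, the relevant square becomes a pullback along the smooth map $\mathcal T_\lambda\to Y$ (twisting by $g_1$), and smooth, hence flat, base change has no higher Tor. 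The first formula in \eqref{eq:passive-block-kernel-and-pushforward} is then derived base change for the closed immersion $\cl$ and the flat map. The second formula follows by applying $R\widetilde m_*$ together with functoriality $R\widetilde m_*\cl'_*=R(\widetilde m\circ\cl')_*$.

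For the positive case, I would read off from the table in Section~\ref{sec:lattice-models} that every point of $\Gr_\lambda$, for $\lambda$ a nonnegative combination of the $\omega_k$ and $\gamma_a$, has $\Lambda\subset L_0$ and $L_a\subset\O$. Hence $\mathcal R^{\cl}=\mathcal T$, and the first part applies to both factors. In a chain $L_0\supset L_1\supset L_2$ the condition on $s$ is just $s\in L_2$, which is the endpoint. Thus the whole correspondence, together with its image under $\widetilde m$, is the pullback of $\Gr_\lambda\widetilde\times\Gr_\mu\to\Gr_{\lambda+\mu}$ along the flat tautological bundle $\mathcal T_{\lambda+\mu}$. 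Flat base change then transports the known coherent-Satake computations of \cite{CW19,CW23} for $GL_n$. In those computations, $\mathcal O(\ell)$ on a minuscule $\Gr_{\omega_k}$ has simple convolution square, because $m\colon\Gr_{\omega_k}\widetilde\times\Gr_{\omega_k}\to\Gr_{2\omega_k}$ is a rational resolution carrying $(\det Q_k)^\ell$ to $(\det Q)^{\ell}$. The point orbits $\gamma_a$ convolve by the group law to the point orbit $\omega_n$. The identity $\mathsf D\odot\mathcal P_{\omega_n,0}=\mathcal P_{\omega_n,1}$ then holds because $\det A$ restricted to the $\omega_n$ point orbit is $\det Q_n$.

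I expect the main obstacle to be the bookkeeping rather than the geometry. One must check that all of this is compatible with the ind-limit over truncations in \cite{CW23}. One must also check that the shifts $[\cdot]\{\cdot\}$ of \eqref{eq:gln-positive-simple} and \eqref{eq:integral-component-transport} add up exactly, so that the pushforwards are the normalized $\odot$-products and not shifts of them. I would first verify the shift identity on classes using Proposition~\ref{prop:gln-simple-frame}. I would then lift it to objects, using that both sides are simple and so are determined by their classes up to the free shift action.
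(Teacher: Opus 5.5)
Your first paragraph contains a false step. If $g_1N_{\mathcal O}\subset N_{\mathcal O}$, then $N_{\mathcal O}+g_1N_{\mathcal O}=N_{\mathcal O}$. The excess quotient is therefore $N_{\mathcal K}/N_{\mathcal O}$, or $t^{-M}N_{\mathcal O}/N_{\mathcal O}$ on a finite truncation, and it does not vanish; in effect you computed $(N_{\mathcal O}+g_1N_{\mathcal O})/N_{\mathcal O}$. So neither $\mathcal R_\lambda\to\mathcal T_\lambda$ nor the immersion of $\mathcal R^{\mathrm{cl}}_\lambda$ into $\mathcal R_\lambda$ is an equivalence. The paper says this explicitly right after \eqref{eq:component-koszul-resolution}: excess Tor terms need not vanish even when $\mathcal R^{\mathrm{cl}}_\lambda=\mathcal T_\lambda$. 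That is exactly why the lemma concerns pullbacks of the classical supports rather than $\mathcal R_\lambda$ itself.

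The rest of your argument does not need that claim, and once you delete it, what remains is the paper's proof. The map $d$ is the base change of the diagonal $Y\to Y\times Y$ along the product of the target and source projections $X\times X\to Y\times Y$. So the pullback of $\mathrm{cl}$ is $\mathcal R^{\mathrm{cl}}_\lambda\times^{\mathbf R}_Y\mathcal R^{\mathrm{cl}}_\mu$. Only the classical supports and their maps to $Y$ enter here, never the derived structure of $X$. With $\mathcal R^{\mathrm{cl}}_\lambda=\mathcal T_\lambda$, the map $(g_1,s)\mapsto g_1^{-1}s$ is a fibration over $Y$ with Schubert-variety fibers, hence flat. This gives Tor-independence and the twisted product, and the two displayed formulas then follow from base change and functoriality of pushforward.

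The paper writes the same fiber product as $\mathcal R^{\mathrm{cl}}_\lambda\times^{\mathbf R}_{\mathcal T_\lambda}(\operatorname{Gr}_\lambda\widetilde\times\mathcal R^{\mathrm{cl}}_\mu)$ via \cite[Eq.~(5.10), Prop.~5.9]{CW23} and notes that the first map is the identity. Say ``flat'' rather than ``smooth'': for $\lambda=2\omega_k$, which arises in the reality statement, $\operatorname{Gr}_\lambda$ is singular. Your treatment of the nonnegative case, via decreasing chains and pullback along the endpoint bundle, agrees with the paper.

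Two smaller points.

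First, the rational-resolution computation for $N=0$ identifies the convolution square as the pushforward of $(\det)^\ell$ from the classical support over $\operatorname{Gr}_{2\omega_k}$. Its simplicity in $\mathcal C_{n,r}$ is not formal from this, because the t-structure depends on $N$. The paper takes the case $r=1$ from \cite[Prop.~8.9]{CW23} and identifies the transported object using the classification of simples in Theorem~\ref{thm:CW-heart}; you need the same input.

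Second, $\bigodot_a\mathsf U_a=\mathcal P_{\omega_n,0}$ needs no detour through classes. The convolution of the central point modifications is an isomorphism onto the $\omega_n$ point orbit. The vectors $\alpha_{u_a}$ pair to zero, so $\odot$ adds no shift. Finally, $\mathsf U_1$ and $\mathcal P_{\omega_n,0}$ both carry the shift $[-\kappa_n]\{\kappa_n\}$, while $\mathsf U_a$ for $a\geq2$ carries none.
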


\begin{proof}
Pulling back the two support immersions in the Cartesian squares
of \cite[Eq.~(5.10), Prop.~5.9]{CW23} identifies their pullback
to the derived convolution correspondence with
\[
 \mathcal R_{\lambda}^{\mathrm{cl}}
 \mathbin{\times^{\mathbf R}_{\mathcal T_{\lambda}}}
 \bigl(\operatorname{Gr}_{\lambda}\widetilde\times
                    \mathcal R_{\mu}^{\mathrm{cl}}\bigr).
\]
The first map to $\mathcal T_{\lambda}$ is the identity. The fiber
product therefore equals the second factor and is classical.
The same proper-base-change argument applies to vector bundles
and computes their convolution by pushforward from this
classical correspondence into the derived target.

For the coweights in the second assertion and each $a$, the successive
lattices $L_a\Lambda$ in the $a$th summand of $N_{n,r}((t))$
form a decreasing chain. A vector belonging to the final lattice
belongs to every intermediate lattice. The correspondence with these vectors
is therefore the pullback of the Grassmannian correspondence along
the bundle of vectors in the endpoint lattices. On every
finite lattice quotient this map is a vector bundle and hence flat.
Proper flat base change proves the assertion for pushforwards and
the projection formula handles the bundle $\mathcal L$. The
identifications commute with the flat changes of lattice quotient
and the proper changes of Schubert bound in \cite[Sec.~4.1]{CW23},
so they give the asserted equivalences on the space of triples.

For $r=1$, the reality of every
$\mathcal P_{\omega_k,\ell}$ is \cite[Prop.~8.9]{CW23}.
The preceding flat-base-change calculation identifies its
convolution square for $N_{n,r}$ with the flat pullback of
the kernel and its pushforward for $r=1$, with the vectors in the
remaining summands varying in their endpoint bundles. The description of simples in
\cite[Prop.~3.27 and Thm.~4.1]{CW23} identifies the resulting
normalized object as the simple with the same Schubert and
Levi labels. Thus reality is preserved. For the central point
modifications $\gamma_a$, the convolution morphism is an
isomorphism; the same description proves reality of $\mathsf U_a$
and identifies $\bigodot_a\mathsf U_a$ with
$\mathcal P_{\omega_n,0}$. Tensoring by the determinant line,
with the normalization in \eqref{eq:gln-positive-simple}, gives
the identity for $\mathsf Y_n$.

\end{proof}

\begin{proposition}
\label{prop:cat-initial-seed}
\label{lem:initial-real-commuting}
The objects $\mathsf U_a$, $\mathsf Y_k$, $\mathsf Z_k$,
$\mathsf D$, and $\mathsf H_a$ form a quantum monoidal seed for
$(\Lambda_{n,r},\widetilde B_{n,r})$. Its frozen objects are
invertible. Every normalized monomial in the seed objects, allowing
arbitrary integral frozen powers, is simple, and distinct exponent
vectors give nonisomorphic objects even up to loop shift.
\end{proposition}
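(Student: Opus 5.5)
The plan has four steps: reality of the seed objects, pairwise $v$-commutation with the exponents $\Lambda_{n,r}$, simplicity of all normalized monomials, and pairwise distinctness of those monomials.

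\emph{Reality and invertibility.} Lemma~\ref{lem:cat-positive-kernel} gives that every $\mathsf Y_k=\mathcal P_{\omega_k,1}$, $\mathsf Z_k=\mathcal P_{\omega_k,0}$ and $\mathsf U_a$ is real. The discussion preceding the lemma shows that $\mathsf D$ and $\mathsf H_a$ are invertible and real, and that they $v$-commute with every simple object.

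\emph{Pairwise commutation.} All mutable coweights $\omega_k$ and $\gamma_a$ are nonnegative combinations. By the second part of Lemma~\ref{lem:cat-positive-kernel}, the convolution of any two of $\mathsf U_a,\mathsf Y_k,\mathsf Z_k$ is therefore the flat pullback, along the endpoint tautological bundle, of the corresponding convolution in the affine-Grassmannian (coherent Satake) category. For $GL_n$ with $r=1$, \cite[Sec.~8]{CW23} shows that the objects $\mathcal P_{\omega_k,\ell}$ with $\ell\in\{0,1\}$ pairwise $v$-commute and have simple convolutions. On the Grassmannian side this is the statement that $\mathcal O(\ell)$ on $\Gr_{\omega_k}$ and $\mathcal O(\ell')$ on $\Gr_{\omega_j}$ convolve to a simple object whenever $|\ell-\ell'|\le1$ and the supports are nested minuscule orbits. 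Flat pullback preserves this, exactly as in the reality argument of the lemma. The point orbits $\gamma_a$ give central modifications, so convolution with $\mathsf U_a$ is an isomorphism on supports and commutes with everything up to shift.

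The shifts are then forced. The renormalized $r$-matrix between two pairwise-simple-product objects is an isomorphism $M\conv N\simeq N\conv M\{\Lambda(M,N)\}$. Taking classes in $K_0(\mathcal C_{n,r})$ and using Proposition~\ref{prop:gln-simple-frame}, the classes satisfy $[M_i][M_j]=v^{\lambda_{ij}}[M_j][M_i]$ in the quantum torus $\mathcal T_+$. Since $K_0$ is a domain with injective coefficient map, this pins down $\Lambda(M_i,M_j)=\lambda_{ij}$ (up to the sign convention of Definition~\ref{def:homogeneous-r-matrices}).

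\emph{Simplicity of monomials.} This step follows from the pairwise statements. For a family of pairwise commuting real simples whose pairwise products are simple, \cite[Cor.~4.15 and Prop.~4.14]{CW19}, in the homogeneous form justified in the proof of Lemma~\ref{lem:local-mutation-test}, show that every ordered convolution $\mathbf M(a)$ is simple. Convolution with the invertible objects $\mathsf D^{\pm1}$ and $\mathsf H_a^{\pm1}$ preserves simplicity, which handles arbitrary frozen powers.

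\emph{Distinctness and the main obstacle.} By Theorem~\ref{thm:gln-upper-Coulomb} and Proposition~\ref{prop:gln-simple-frame}, $[\mathbf M(a)]$ is the image of the based monomial $Z_{\sum a_i\alpha_i}$, and this image is injective on the quantum torus. Distinct exponents therefore give non-proportional classes, and since grading shifts act freely, the objects are nonisomorphic even up to shift. The compatible pair is Theorem~\ref{thm:gln-compatible-pair}.

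The main obstacle is the pairwise simplicity $\mathsf Y_k\conv\mathsf Z_j$ and $\mathsf Y_k\conv\mathsf Y_j$ for $j\ne k$. This requires checking that the $r=1$ results of \cite{CW23} genuinely cover all pairs $(\omega_k,\ell),(\omega_j,\ell')$ and not only $j=k$. If they do not, I would compute directly: the convolution $\Gr_{\omega_k}\widetilde\times\Gr_{\omega_j}\to\Gr_{\omega_k+\omega_j}$ is birational, with the line bundle descending appropriately, so that its pushforward is the simple $\mathcal P_{\omega_k+\omega_j,\mu}$.
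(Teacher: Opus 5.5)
Your steps on reality, on simplicity of all normalized monomials via \cite[Cor.~4.15]{CW19}, and on distinctness through the quantum torus are the paper's. Your treatment of products with the point objects $\mathsf U_a$ via Lemma~\ref{lem:cat-positive-kernel} is also fine. The gap is the one you flag yourself: nothing in the proposal proves that $\mathsf Y_k\conv\mathsf Z_j$ (for all $j,k$) and $\mathsf Y_k\conv\mathsf Y_j$, $\mathsf Z_k\conv\mathsf Z_j$ (for $j\ne k$) are simple.

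The appeal to \cite[Sec.~8]{CW23} is not tied to any statement covering these mixed products. The paper draws on that section for normalizations, for reality of each $\mathcal P_{\omega_k,\ell}$ (convolution squares), and for the exchange sequences, not for these products. Your fallback also fails:
\begin{itemize}
\item On a fiber of $\Gr_{\omega_k}\widetilde\times\Gr_{\omega_j}\to\Gr_{\omega_k+\omega_j}$ the endpoints $L_0,L_2$ are fixed. So $\det(L_0/L_1)^{\ell}\otimes\det(L_1/L_2)^{\ell'}$ restricts to $\det(L_0/L_1)^{\ell-\ell'}$, a nontrivial Pl\"ucker power on the positive-dimensional Grassmannian fibers whenever $\ell\ne\ell'$. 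In precisely the $\mathsf Y\conv\mathsf Z$ cases you need, the line bundle does not descend.
\item Even when it descends, birationality and the projection formula only give a line bundle on a singular Schubert variety. Identifying that with the intermediate extension $\mathcal P_{\omega_k+\omega_j,\mu}$ requires the perverse-coherent conditions along the boundary strata, which you do not check.
\item Birationality alone does not decide simplicity. For example, $\mathcal P_{\omega_k,0}\conv\mathcal P_{\omega_k,2}$ is also pushed forward along a birational convolution map, yet has length two by \eqref{eq:cat-q-second}.
\end{itemize}

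The missing idea is that you already hold what closes this step, but you use it in the wrong direction. Proposition~\ref{prop:gln-simple-frame} gives $[M_i][M_j]=\vloop^{\lambda_{ij}}[M_j][M_i]$ for all seed objects, and you invoke this only afterwards to read off the $r$-matrix degree. Each seed object is real simple by Lemma~\ref{lem:cat-positive-kernel}. The $r$-matrix criterion \cite[Cor.~1.4]{CW23}, applied with one real factor, then converts class-level $\vloop$-commutation directly into simplicity of $M_i\conv M_j$ and an isomorphism of $M_i\conv M_j$ with a loop shift of $M_j\conv M_i$.

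That is the paper's argument. It needs no convolution geometry beyond reality and treats $\mathsf U_a$, $\mathsf Y_k$, $\mathsf Z_k$ uniformly. After it, your iteration via \cite[Cor.~4.15]{CW19}, the frozen inverses, and distinctness go through as you wrote them. If you want to keep the geometric route, you need either a precise reference computing all these mixed products or an actual verification of the intermediate-extension conditions; the heuristic you offer does not supply it.
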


\begin{proof}
The classification of simples in \cite[Thm.~4.1]{CW23} gives
simplicity of all the displayed objects.
Lemma~\ref{lem:cat-positive-kernel} and the preceding argument for
$\mathsf D$ and $\mathsf H_a$ show that every seed object is real and
that $\mathsf D$ and $\mathsf H_a$ are invertible.

By Proposition~\ref{prop:gln-simple-frame}, the classes of the
displayed objects are the initial toric-frame generators
with exponents $\alpha_i$. They therefore
$v$-commute with form $\Lambda_{n,r}$, and the integral-basis
property of the $\alpha_i$ gives independence of based monomials.
The $r$-matrix criterion \cite[Cor.~1.4]{CW23}, applied with one
real simple factor, lifts these commutation relations to
isomorphisms of the convolutions and makes their products simple.
Its iterated form \cite[Cor.~4.15]{CW19} proves simplicity of
every seed monomial. The same argument applies to $\mathsf D^{-1}$
and $\mathsf H_a^{-1}$, so arbitrary frozen powers are allowed. Independence
in the quantum torus proves the final assertion.
\end{proof}

\subsection{The initial exchange sequences}

\begin{theorem}
\label{thm:cat-initial-exchanges}
The initial mutation candidates are
\begin{equation}
\label{eq:hf-mutation-candidates}
 \mathsf U'_a=\mathsf D\odot\mathcal P_{\omega_{n-1}-\gamma_a,0},
 \qquad\mathsf Y'_k=\mathcal P_{\omega_k,-1},
 \qquad\mathsf Z'_k=\mathcal P_{\omega_k,2}.
\end{equation}
For $n\geq2$ and $r\geq1$, the following are short exact sequences
in $\mathcal C_{n,r}$:
\begin{align}
0&\longrightarrow(\mathsf D\odot\mathsf Z_{n-1})\{-1\}
 \longrightarrow(\mathsf U_1*\mathsf U'_1)\{n-1\}
 \longrightarrow\mathsf Y_{n-1}\longrightarrow0,
 \label{eq:cat-outer-first}\\
0&\longrightarrow(\mathsf D\odot\mathsf Z_{n-1})\{-1\}
 \longrightarrow(\mathsf U_a*\mathsf U'_a)\{-1\}
 \longrightarrow\mathsf H_a\odot\mathsf Y_{n-1}
 \longrightarrow0 \quad(2\leq a\leq r),
 \label{eq:cat-outer-other}\\
0&\longrightarrow(\mathsf Z_{k-1}\odot\mathsf Z_{k+1})\{-1\}
 \longrightarrow(\mathsf Y_k*\mathcal P_{\omega_k,-1})\{-2k\}
 \longrightarrow\mathsf Z_k\odot\mathsf Z_k\longrightarrow0,
 \label{eq:cat-q-first}\\
0&\longrightarrow(\mathsf Y_k\odot\mathsf Y_k)\{-1\}
 \longrightarrow(\mathsf Z_k*\mathcal P_{\omega_k,2})\{2k-1\}
 \longrightarrow\mathsf Y_{k-1}\odot\mathsf Y_{k+1}
 \longrightarrow0 \quad(1\leq k<n).
 \label{eq:cat-q-second}
\end{align}
They realize the initial quantum mutations at $u_a$, $y_k$, and
$z_k$, respectively.
\end{theorem}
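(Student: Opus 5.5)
The plan is to reduce all four families of sequences to the two sources named in the outline, the coherent Q-system of \cite[Prop.~8.5]{CW23} and the Cartier-divisor sequences of \cite[Prop.~8.8]{CW23}, transported to $N_{n,r}$ by Lemma~\ref{lem:cat-positive-kernel}. Once exactness is established, the Grothendieck relations are compared with the shifted exchange relation \eqref{eq:shifted-quantum-exchange}, using the exponents $s_i^\pm$ of \eqref{eq:gln-exchange-shifts} and $d_i=2$. Lemma~\ref{lem:local-mutation-test}, applied to the seed of Proposition~\ref{prop:cat-initial-seed}, then identifies the middle simple factors with the mutated variables. This works because $K_0(\mathcal C_{n,r})\cong\mathcal U_{n,r}$ sits in a skew field by Theorem~\ref{thm:gln-upper-Coulomb}.

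\emph{Sequences \eqref{eq:cat-q-first} and \eqref{eq:cat-q-second}.} For $r=1$ and pure $GL_n$-type data these are the Q-system sequences of \cite[Prop.~8.5]{CW23}. There $\mathcal P_{\omega_k,\ell}*\mathcal P_{\omega_k,\ell'}$ is computed on the convolution $\Gr_{\omega_k}\widetilde\times\Gr_{\omega_k}$. The two extreme strata are $\Gr_{\omega_{k-1}+\omega_{k+1}}$ and $\Gr_{2\omega_k}$, and the Cartier divisor separating them yields the extension. All coweights here are nonnegative combinations of the $\omega_j$. Moreover, on $\Gr_{\omega_k}$ the classical locus equals the tautological bundle $\mathcal T_{\omega_k}$ (the table in Section~\ref{sec:group}). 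Hence Lemma~\ref{lem:cat-positive-kernel} applies: the convolution for $N_{n,r}$ is the flat pullback of the $r=1$ calculation along the endpoint tautological bundle, so exactness transfers. The endpoint terms at $k=1$ and $k=n-1$ involve $\mathsf Z_n$ and $\mathsf Y_n$. These are identified via $\bigodot_a\mathsf U_a=\mathcal P_{\omega_n,0}$ and $\mathsf D\odot\mathcal P_{\omega_n,0}=\mathcal P_{\omega_n,1}$, both from Lemma~\ref{lem:cat-positive-kernel}. The remaining work is bookkeeping: converting the formal shifts of \cite[Eq.~(8.3)]{CW23} through $\mathfrak T_\beta$ and \eqref{eq:gln-positive-simple}. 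This must reproduce the displayed twists $\{-2k\}$ and $\{2k-1\}$, which match $-s^-_{y_k}=-2k$ and $-s^-_{z_k}=2k-1$.

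\emph{Sequences \eqref{eq:cat-outer-first} and \eqref{eq:cat-outer-other}.} Here $\mathsf U_a*\mathcal P_{\omega_{n-1}-\gamma_a,0}$ has total coweight $\omega_{n-1}$ in the $GL_n$ part, and Lemma~\ref{lem:cat-positive-kernel} does not apply directly. I would instead use the $L^-$ description of $\lambda_{n-1,\{a\}}$. In that description $\Gr$ is $\mathbb P(t^{-1}L_0/L_0)\cong\mathbb P^{n-1}$, the $a$th summand is $L_0$, and the others are $tL^-$. The point modification $\gamma_a$ then multiplies the $a$th ideal by $t$. So on the convolution space the condition in the $a$th summand is $s_a\in tL_0\cap t L^-$ versus $s_a\in L_0$. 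The product is supported on $\R^{\cl}_{\omega_{n-1}}$ but carries a section of the line bundle $L^-/L_0$. Its vanishing locus is a Cartier divisor, giving a sequence $0\to\mathcal O(-\text{div})\to\mathcal O\to\mathcal O_{\text{div}}\to0$, as in \cite[Prop.~8.8]{CW23}. One term is $\mathcal O\otimes\det Q_{n-1}$, which is $\mathsf Y_{n-1}$, twisted by $h_a$ when $a\ge2$. The other is $\mathsf Z_{n-1}$ twisted by the character from $\det(L^-/L_0)$, which together with the $\mathsf D$ factor produces $\mathsf D\odot\mathsf Z_{n-1}$. The outer summands $b\neq a$ contribute vector bundles only, which are handled by proper flat base change and the projection formula. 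For $a=1$ the frozen factor $\mathsf H_1=\mathbf 1$ disappears, and the extra shift $\{n-1\}$ versus $\{-1\}$ comes from $c_{n,r}([\gamma_1])=n(n-1)$ and $s^-_{u_1}=1-n$.

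I expect the main obstacle to be this second step. The difficulty is showing that the derived convolution is concentrated on the classical support, with no excess Tor from the negative summand $-\gamma_a$, and pinning down which quotient appears as sub and which as quotient together with exact grading shifts. I would first verify that the excess quotient $N_\K/(N_\O+gN_\O)$ vanishes along the relevant correspondence, so that the Koszul complex \eqref{eq:component-koszul-resolution} is classical. I would then fix the orientation of the sequence by the requirement that the socle carry the $\{-1\}$ twist, consistent with $r$-matrix axiom (iv). Finally, the classes are checked against the exchange relation by localization at torus fixed points, which also confirms, via Proposition~\ref{prop:gln-simple-frame}, that $\mathsf U'_a$ is the predicted simple.
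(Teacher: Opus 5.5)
\textbf{Overall.} Your overall plan matches the paper's. The $y_k,z_k$ sequences are the $(GL_n,V)$ Q-system sequences transported by the flat-pullback part of Lemma~\ref{lem:cat-positive-kernel}. The $u_a$ sequences come from a Cartier divisor in $\R^{\mathrm{cl}}_{\omega_{n-1}}$. Lemma~\ref{lem:local-mutation-test} finishes. One citation correction: in the paper's proof, \cite[Prop.~8.5]{CW23} is the $(GL_n,V)$ Cartier-divisor exchange and \cite[Prop.~8.8]{CW23} is the Q-system, taken with $\ell=0$, and with $\ell=1$ followed by $\{-1\}$.

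\textbf{The gap.} The problem is in the step you call the main obstacle. You assert that Lemma~\ref{lem:cat-positive-kernel} does not apply to $\mathsf U_a\conv\mathcal P_{\omega_{n-1}-\gamma_a,0}$. Instead you propose to show that the excess quotient $N_\K/(N_\O+gN_\O)$ vanishes, so that the Koszul complex \eqref{eq:component-koszul-resolution} is classical. That verification fails. Over $\omega_{n-1}$ one has $gN_\O\subseteq N_\O$, so the excess is $N_\K/N_\O\neq0$ on every truncation. As Section~\ref{sec:lattice-models} stresses, the excess Tor terms survive even when $\R^{\mathrm{cl}}_\lambda=\T_\lambda$, so $\R_{\omega_{n-1}}$ is not classical.

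\textbf{The fix.} Classicality of $\R$ is not needed. What is needed is that the derived pullback of the two support immersions to the convolution correspondence is classical. That is the first part of Lemma~\ref{lem:cat-positive-kernel}, whose hypothesis concerns only the first factor. It applies here because $\R^{\mathrm{cl}}_{\gamma_a}=\T_{\gamma_a}$: the tautological lattice $t^{\delta_{ab}}L_0$ already lies in $L_0$. The lemma gives the classical correspondence $\Gr_{\gamma_a}\widetilde\times\R^{\mathrm{cl}}_{\omega_{n-1}-\gamma_a}$. The endpoint determines the intermediate point modification, so this maps isomorphically onto $D_a=\{s_b\in L\ \forall b,\ s_a\in tL_0\}\subset\R^{\mathrm{cl}}_{\omega_{n-1}}$. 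The product is therefore $i_{\omega_{n-1}*}\O_{D_a}$ up to the normalizing twists.

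\textbf{Two smaller corrections in the same step.}
\begin{itemize}
\item On $\R^{\mathrm{cl}}_{\omega_{n-1}}$ the comparison is $s_a\in tL_0$ versus $s_a\in L$, not $L_0$. The divisor is cut out by the image of $s_a$ in $S=L/tL_0$, a section of $S\otimes h_a^{-1}$. Your bundle $L^-/L_0=t^{-1}S$ differs from $S$ by the loop weight of $t$, which would corrupt the shift bookkeeping.
\item The orientation should not be fixed by an $r$-matrix requirement; it is read off from the triangle. Tensor $\O\otimes S^\vee\otimes h_a\to\O\to\O_{D_a}$ by $\det V$ and use $\det V\otimes S^\vee\simeq\det Q$. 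After the common shift $[-\kappa_{n-1}-n]$, the rotated triangle $\det V\to\det V\otimes\O_{D_a}\to h_a\det Q[1]$ has first term the normalization of $\mathsf D\odot\mathsf Z_{n-1}$. Its third term $h_a\det Q[1-\kappa_{n-1}-n]$ is the normalization of $\mathsf H_a\odot\mathsf Y_{n-1}$. It is thus already the short exact sequence with the stated sub and quotient. The loop shifts for $a=1$ then come from \cite[Eq.~(8.6)]{CW23} and the pairings of $\det A$ with $\omega_{n-1}$ and $\omega_{n-1}-\gamma_1$.
\end{itemize}
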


\begin{proof}
The exchange sequence for $(GL_n,V)$ is \cite[Prop.~8.5]{CW23}.
We apply its Cartier-divisor construction to the $a$th summand of $N_{n,r}$,
retaining $G_{n,r}$-equivariance and the loop grading.
Over
$\operatorname{Gr}_{\omega_{n-1}}$ put $S=L/tL_0$ and
$Q=L_0/L$. The bundle $S$ has rank one and $Q$ has rank $n-1$.
The convolution of the point modification $\gamma_a$ with
$\omega_{n-1}-\gamma_a$ has classical support
\[
 D_a=\{(L,s_1,\ldots,s_r):s_b\in L\text{ for every }b,
                                      \ s_a\in tL_0\}.
\]
Indeed, the first modification scales only the $a$th summand of $N_{n,r}$, and the
second restores its endpoint lattice to $L$. The endpoint uniquely
determines the intermediate point modification. Lemma
\ref{lem:cat-positive-kernel} identifies the convolution kernel
with the pushforward of $\mathcal O_{D_a}$.

The image of $s_a$ in $S$ is a regular section of $S\otimes h_a^{-1}$,
with $h_1=1$. It is a fiberwise linear coordinate, so its zero locus
is the Cartier divisor $D_a$. On $\mathcal R_{\omega_{n-1}}^{\mathrm{cl}}$
we have the distinguished triangle associated to this divisor,
\begin{equation}
\label{eq:cartier-v}
 \mathcal O\otimes S^\vee\otimes h_a
 \longrightarrow\mathcal O\longrightarrow\mathcal O_{D_a}
 \longrightarrow(\mathcal O\otimes S^\vee\otimes h_a)[1],
\end{equation}
and
$\det V\otimes S^\vee\simeq\det Q$.
Rotating this triangle and tensoring by $\det V$ therefore gives
a triangle with terms
$\det V$, $\det V\otimes\mathcal O_{D_a}$, and
$h_a\det Q[1]$.

For $a=1$, the loop shifts before replacing convolution with $\mathsf D$
by normalized products are those of
\cite[Eq.~(8.6)]{CW23}: $-n$ on
$\mathsf D*\mathsf Z_{n-1}$ and $n$ on
$\mathsf U_1*(\mathsf D*\mathcal P_{\omega_{n-1}-\gamma_1,0})$.
The bundles of the vectors $s_b$, $b\ne1$, are pulled back from the
endpoint and do not change these loop shifts. The pairings of $\det A$ with
$\omega_{n-1}$ and $\omega_{n-1}-\gamma_1$ are $n-1$ and $-1$,
respectively. Replacing these two ordered products by normalized
products gives $-1$ and $n-1$, proving
\eqref{eq:cat-outer-first}.

For $a\geq2$, the modification $\gamma_a$ has trivial $GL_n$
determinant and no normalizing shift. The object
$\mathcal P_{\omega_{n-1}-\gamma_a,0}$ has shifts
$[-\kappa_{n-1}]\{\kappa_{n-1}\}$.
Multiplication by $\mathsf D$, normalization using
$\langle\det A,\omega_{n-1}-\gamma_a\rangle=n-1$, and the displayed
middle shift $-1$ give shifts
$[-\kappa_{n-1}-n]\{\kappa_{n-1}\}$ on
$\det V\otimes\mathcal O_{D_a}$. These are the shifts of the
subobject in \eqref{eq:cat-outer-other}. The quotient after rotating
\eqref{eq:cartier-v} has cohomological degree one larger, namely
$-\kappa_{n-1}-(n-1)$, which is precisely that of
$\mathsf H_a\odot\mathsf Y_{n-1}$. This proves
\eqref{eq:cat-outer-other}. The factor $h_a$ comes from the
equivariance of the section. Equivalently, conversion from the
linearization given by $Ah_a^{-1}$ to that given by $A$ uses
\begin{equation}
\label{eq:block-character-conversion}
 \det(Ah_a^{-1})=\det(A)h_a^{-n},\qquad
 \det Q_{Ah_a^{-1}}=(\det Q_A)h_a^{-(n-1)}.
\end{equation}

For the mutations at $y_k$ and $z_k$, use \cite[Prop.~8.8]{CW23}.
Its two sequences hold for every $1\leq k<n$ and every determinant
dressing $\ell\in\mathbb Z$. Their convolution kernels depend
on the vectors $s_a$ only through the endpoint lattices, so
Lemma~\ref{lem:cat-positive-kernel} gives the same sequences for
$(G_{n,r},N_{n,r})$.
The equivalence \eqref{eq:integral-component-transport} is monoidal
and hence preserves the
sequences. Taking $\ell=0$ in the first gives
\eqref{eq:cat-q-first}; taking $\ell=1$ in the second and shifting
by $\{-1\}$ gives \eqref{eq:cat-q-second}. For fixed determinant dressing
$\ell$, the factors with indices $k-1$ and $k+1$ have zero pairing,
so replacing their ordered convolutions by normalized products adds no shift.
The sequences of \cite[Prop.~8.8]{CW23} and their pullbacks are triangles whose terms lie in
the heart, by \cite[Thms.~4.1 and 5.1]{CW23}; they are therefore
short exact sequences in $\mathcal C_{n,r}$.

Finally, the shifts in \eqref{eq:gln-exchange-shifts} have
$s_i^+-s_i^-=-1$ and agree with the displayed sequences. They
are exactly the shifts of
Definition~\ref{def:quantum-monoidal-seed},
\eqref{eq:abstract-monoidal-mutation}, for compatibility
diagonal $2I$. The candidates are simple, and compatibility
excludes zero exchange columns. Theorem~\ref{thm:gln-upper-Coulomb} embeds
$K_0(\mathcal C_{n,r})$ into this skew field, which contains
the initial toric frame and its mutations. Thus
Lemma~\ref{lem:local-mutation-test} applies: the displayed sequences are
quantum monoidal mutations, their new objects are real simple,
and the reverse-order exchange sequences also hold.
\end{proof}

\subsection{Arbitrary sequences of monoidal mutations}

\begin{theorem}
\label{thm:cat-all-mutations}
For every $n\geq2$ and $r\geq1$, every finite mutation
sequence from Proposition~\ref{prop:cat-initial-seed} is realized
by quantum monoidal seeds and short exact sequences in
$\mathcal C_{n,r}$. Every cluster variable is represented by a
real simple object, and every quantum cluster monomial, with
arbitrary integral frozen powers, is represented by a simple
normalized convolution.
\end{theorem}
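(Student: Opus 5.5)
The plan is to apply Lemma~\ref{lem:frozen-localized-factorization} to $\mathcal C=\mathcal C_{n,r}$ with the initial seed of Proposition~\ref{prop:cat-initial-seed}. Its conclusions are exactly those of the theorem: every finite sequence of quantum monoidal mutations exists together with forward and reverse exchange sequences, cluster variables are real simples, and localized cluster monomials are simple normalized products. The proof therefore reduces to checking the hypotheses of that lemma one by one, using results already established.

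\emph{Categorical hypotheses.} By Theorems~\ref{thm:CW-heart} and~\ref{thm:CW-monoidal-structure} and the discussion opening Section~\ref{sec:category}, $\mathcal C_{n,r}$ is $\C$-linear, locally finite and of finite length, with biexact convolution and $\operatorname{End}(\mathbf1)=\C$. The loop shift is an exact central monoidal action. Rigidity supplies separable triple products and nonzero homogeneous $r$-matrices satisfying Definition~\ref{def:homogeneous-r-matrices}. The frozen objects $\mathsf D,\mathsf H_a$ are tensor-invertible. Proposition~\ref{prop:cat-initial-seed} makes the initial collection a quantum monoidal seed, and Theorem~\ref{thm:cat-initial-exchanges} provides a mutation in every exchangeable direction.

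\emph{Combinatorial hypotheses.} Compatibility \eqref{eq:gln-compatibility} gives $D=2I$. By \cite[Prop.~4.7]{BZ05}, mutation preserves compatibility with the same $D$ and preserves evenness of $\Lambda=2((\alpha_i,\alpha_j))$, so every mutated commutation form is even. A mutable column cannot become zero: if it did, the compatibility identity $\widetilde B'^{\mathsf T}\Lambda'=(2I\;0)$ would fail in that row.

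\emph{Ring-theoretic hypotheses.} These come from Theorem~\ref{thm:gln-upper-Coulomb}. Since $K_0(\mathcal C_{n,r})\cong\mathcal U_{n,r}$ sits inside the skew field of the initial quantum torus, it is a domain and the coefficient map from $R=\Z[\vloop^{\pm1}]$ is injective. The initial map from quantum affine space is injective because the $\alpha_i$ form an integral basis and Proposition~\ref{prop:gln-simple-frame} identifies the classes. For the required embedding of $\mathbb Q(\vloop)\otimes_R\mathcal A(\Lambda,\widetilde B)$ in the same toric frame, use $\mathcal A_{n,r}\subseteq\mathcal U_{n,r}$, the identification of Theorem~\ref{thm:gln-upper-Coulomb}, and flatness of $\mathbb Q(\vloop)$ over $R$, so that the inclusion stays injective after extending scalars.

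I expect the main obstacle to be this last step: verifying that the identification sends cluster variables to classes in the same frame used by the lemma, with the exact shift conventions. Here I would invoke Lemma~\ref{lem:CW-half-shift-extension} to keep all objects in the integral part, and use the sign-corrected frame $\rho$, which Lemma~\ref{lem:dilog-transport} shows intertwines mutation. Finally, frozen powers are handled as in \cite[Cor.~4.35]{CW19}.
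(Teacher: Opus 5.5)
Your proposal is correct and follows the paper's proof. Both reduce the theorem to Lemma~\ref{lem:frozen-localized-factorization} and check its hypotheses from the same sources: Proposition~\ref{prop:cat-initial-seed}, Theorem~\ref{thm:cat-initial-exchanges}, compatibility with $D=2I$ together with evenness preserved under mutation, and the identification $\mathcal U_{n,r}\simeq K_0(\mathcal C_{n,r})$ of Theorem~\ref{thm:gln-upper-Coulomb} combined with $\mathcal A_{n,r}\subseteq\mathcal U_{n,r}$. Your flatness remark for $\mathbb Q(\vloop)$ and your appeals to Lemmas~\ref{lem:CW-half-shift-extension} and~\ref{lem:dilog-transport} are harmless extras, since Theorem~\ref{thm:gln-upper-Coulomb} already establishes the isomorphism in the initial toric frame via Proposition~\ref{prop:gln-simple-frame}.
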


\begin{proof}
We apply Lemma~\ref{lem:frozen-localized-factorization}. The categorical hypotheses were
verified above, and Proposition~\ref{prop:cat-initial-seed}
gives the injective initial toric frame and invertible frozen
objects. Theorem~\ref{thm:cat-initial-exchanges} supplies every
initial one-step mutation. Theorem~\ref{thm:gln-compatible-pair}
gives compatibility with diagonal $2I$, which
excludes zero mutable columns after every matrix mutation, and
the commutation forms remain even under integral matrix mutation.

Theorem~\ref{thm:gln-upper-Coulomb} gives in this same initial toric frame
\[
 \mathcal U_{n,r}\simeq K_0(\mathcal C_{n,r}).
\]
The embedding into this skew field shows that
$K_0(\mathcal C_{n,r})$ is a domain and that the coefficient map
$\mathbb Z[v^{\pm1}]\to K_0(\mathcal C_{n,r})$ is injective. The quantum Laurent
phenomenon gives $\mathcal A_{n,r}\subseteq\mathcal U_{n,r}$
and hence the injection
\[
 \mathbb Q(v)\otimes_{\mathbb Z[v^{\pm1}]}\mathcal A_{n,r}
 \lhook\joinrel\longrightarrow
 \mathbb Q(v)\otimes_{\mathbb Z[v^{\pm1}]}K_0(\mathcal C_{n,r}).
\]
Lemma~\ref{lem:frozen-localized-factorization} now gives every
finite quantum monoidal mutation sequence and the asserted simple
representatives of variables and localized cluster monomials.
\end{proof}

\section{Integral generation}
\label{sec:integral-generation}

Let $G_{n,r}=(GL_n\times(\mathbb C^\times)^r)/
\{(z^{-1}I_n,z,\ldots,z)\}$ and $N_{n,r}=(\mathbb C^n)^{\oplus r}$,
with the seed and the integral Coulomb-branch realization constructed
above. Write $\mathcal K_{n,r}$ for the latter algebra, identified with
$K_0(\mathcal C_{n,r})$. We prove the following equality inside that
realization.

\begin{theorem}\label{thm:integral-generation}\label{thm:gen-higher-rank-main}
Let $n\geq2$ and $r\geq1$. If $n=2$ or $r\leq5$, then
\begin{equation}\label{eq:unified-A-equals-U}
 \mathcal A_{n,r}=\mathcal K_{n,r}=\mathcal U_{n,r}
\end{equation}
over $\mathbb Z[v^{\pm1}]$, with only the original frozen characters
inverted.
\end{theorem}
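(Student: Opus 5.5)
Since Theorem~\ref{thm:gln-upper-Coulomb} already gives $\mathcal K_{n,r}=\mathcal U_{n,r}$ and the quantum Laurent phenomenon gives $\mathcal A_{n,r}\subseteq\mathcal U_{n,r}$, the task is to show that $\mathcal A_{n,r}$ contains a generating set of $\mathcal K_{n,r}$ over $\mathbb Z[v^{\pm1}]$. The generating set I would target is the one from \cite[Prop.~2.3 and the discussion following it]{SS19}. It consists of dressed minuscule-type monopole operators, indexed by the coweights $\pm\gamma_a$, $\omega_k-\sum_{a\in S}\gamma_a$ and $\nu_k$, with dressings running over the representation rings of the stabilizer Levi subgroups. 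Together with all characters of $G_{n,r}$, these generate once we show that every coweight decomposes into these pieces compatibly with chambers; that decomposition is the role of a chamber lemma (the paper's Lemma~\ref{lem:gen-chamber}). Working through the Whittaker identification, I would check membership of each generator in $\mathcal A_{n,r}$ using the categorical exchange sequences of Theorem~\ref{thm:cat-initial-exchanges} and the classes of simple objects from Theorem~\ref{thm:cat-all-mutations}.

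The first step produces the easy generators. Mutating at $u_a$ gives $\mathsf U'_a=\mathsf D\odot\mathcal P_{\omega_{n-1}-\gamma_a,0}$. Mutating at $y_k$ and $z_k$ gives $\mathcal P_{\omega_k,-1}$ and $\mathcal P_{\omega_k,2}$. Iterating $y_k,z_k$ alternately, as in the Q-system, gives every determinant dressing $\mathcal P_{\omega_k,\ell}$. Using the Baxter-type mutation sequences of \cite[Sec.~5.3]{SS19}, transported via $\rho$, I would then obtain the monopoles indexed by $-\gamma_a$. Tensoring with $\mathsf D^{\pm1},\mathsf H_a^{\pm1}$ and determinant powers then gives all $\mathcal P_{\omega_k-\sum_{a\in S}\gamma_a,\ell}$ for every $S$ and $\ell\in\mathbb Z$. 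Next I would use convolution identities in the $K$-theoretic Coulomb branch (abelianized localization formulas): a product such as $[\mathcal P_{\omega_1,\ell}]\cdot[\mathcal P_{\omega_{k-1},m}]$ expands into $\omega_k$-type operators dressed by symmetric polynomials, plus lower terms. Running these identities shows that the dressings obtainable for $\lambda_{k,S}$ form a subring of $\mathrm{Rep}(GL_k\times GL_{n-k})$ containing $\det Q_k^{\pm1}$ and closed under these operations.

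For $k=2$, products of $\omega_1$-type operators with the monopoles at $-\gamma_a$ produce a dressing by each elementary symmetric function $e_1(Q_2)$ shifted by the $r$ linear factors. Since $\mathbb Z[w_1^{\pm},w_2^{\pm}]^{S_2}=\mathbb Z[e_1,e_2^{\pm1}]$, this gives all dressings for every $r$, which settles $n=2$ entirely. For $k\geq3$ I would induct on $k$. Products of the $(k-1)$- and $1$-level operators span an ideal $I$ of the dressing ring. The quotient by $I$ should be computed as the coordinate ring parametrizing monic degree-$k$ factors of $\prod_{a\in S}(x-c_a)$, and a further convolution identity should reduce to the polynomial with one factor deleted. \textbf{The main obstacle is this quotient computation.} When $|S|\leq r\leq5$, the relevant polynomial has degree at most four after deletion. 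In that case the factor-parametrizing ring is a finite free module over the parameters, spanned by the powers of $\det Q_k$, which are already in $\mathcal A_{n,r}$. For larger $r$ the quotient has genuinely new elements, which explains the restriction. I would verify this spanning claim by explicit elimination with the coefficients $c_a$ specialized generically and then integrally. At $k=n$, the same identities give all characters of $G_{n,r}$. The chamber decomposition then expresses every monopole operator as an ordered product of the generators modulo lower terms in the dominance order, and an induction on that order finishes the proof.
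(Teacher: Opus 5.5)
Your outline follows the paper's own route: Baxter sequences and determinant twists for the minuscule operators, the remainder ideal $J_{k,S}$ and its factorization quotient, one-factor erasure for $k\geq3$, and finally Lemma~\ref{lem:gen-chamber} with \cite[Prop.~2.3]{SS19}. However, the $k=2$ step is not justified. That step is what gives $n=2$ for every $r$, and it is also the base of your induction on $k$ for all $S$. The problem is that dressings over a fixed coweight do not multiply: $M_{2,S,f}M_{2,S,g}$ is supported over $2\lambda_{2,S}$, not over $\lambda_{2,S}$ with dressing $fg$. So the identity $R_2=B[e_1,e_2^{\pm1}]$ gives nothing by itself, and the claim that ``the obtainable dressings form a subring'' is unfounded. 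The operators multiply as dressings only at $k=n$, $S=[1,r]$, where $M_{n,[1,r],f}=f(w)$. Even there, for $n=2$, multiplying $M_{2,\varnothing,f}$ by all the $P_a$ at once yields only dressings in the ideal generated by $\prod_aG_a$. Its sum with $B[e_2^{\pm1}]$ is a proper subring of $R_2$ once $r\geq2$.

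What works is the ideal-plus-quotient argument you already use for $k\geq3$, run as an induction on $|S|$:
\begin{enumerate}
\item Assume all $M_{2,T,f}$ are known for $T=S\setminus\{a\}$.
\item Lemma~\ref{lem:point-multiplication} gives $M_{2,T,f}P_a=M_{2,S,fG_a}$, with $G_a=1-ce_1e_2^{-1}+c^2e_2^{-1}$ and $c=q^{-1}h_a$. Hence every dressing in the principal ideal $(G_a)$ is obtained.
\item Since $e_1\equiv c+c^{-1}e_2$ modulo $G_a$, the quotient $R_2/(G_a)$ is spanned by powers of $e_2$.
\item Those powers are the determinant dressings $M_{2,S,\ell}$, which therefore must be available at every intermediate $S$.
\end{enumerate}
For $k\geq3$, $R_k/(G_a)$ is no longer spanned by powers of $e_k$, which is exactly why erasure replaces point multiplication there.

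Two further steps are misattributed.
\begin{itemize}
\item Tensoring with $\mathsf D^{\pm1}$ or $\mathsf H_a^{\pm1}$ never changes the coweight. So the determinant dressings over $\lambda_{k,S}$, for general $k$ and $S$, must come from the Baxter arrow-reversal sequences themselves. The cluster variables those sequences produce have Whittaker images $M_{k,S,0}$ and $M_{k,S,1}$, up to frozen monomials and units (Proposition~\ref{prop:gln-mixed-determinant-membership}).
\item The remaining $\ell$ require that the determinant twist $\mathsf T$ preserve $\mathcal A_{n,r}$. The paper proves this with Lemma~\ref{lem:gln-frozen-rescaling}, because the seed obtained by mutating all $z_k$ agrees with the $\mathsf T$-image of the initial seed only up to $C=I+E_{d,u_1}$. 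Your Q-system iteration needs the same frozen-row bookkeeping.
\end{itemize}
Finally, in $[\mathcal P_{\omega_1,\ell}][\mathcal P_{\omega_{k-1},m}]$ the $\omega_k$-part is the lower term, not the leading one. It is isolated by the specific alternating combination \eqref{eq:general-mixed-recurrence}, which kills the $\omega_1+\omega_{k-1}$ terms and leaves the dressings $fH_t$. These generate $J_{k,S}$ only through the unimodular trace pairing of Lemma~\ref{lem:remainder-ideal}.
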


\subsection{Determinant twists}
\label{sec:gln-determinant-twists}

For a lattice $L$ commensurable with $L_0$, define the relative
determinant line, with the fixed $GL_n$-linearization, by
\begin{equation}\label{eq:relative-determinant-line}
 \mathscr L_L=
 \det\bigl(L_0/(L\cap L_0)\bigr)
 \otimes\det\bigl(L/(L\cap L_0)\bigr)^{-1}.
\end{equation}
We use its integer powers to twist the objects below.
We apply this construction to the lattice $A L_0=L_1\Lambda$ associated
with the projection $G_{n,r}\to GL_n$, $(A,h_2,\ldots,h_r)\mapsto A$.
Transitivity of relative determinants equips this line with a
convolution multiplication; for lattices $L\subseteq L_0$, this is
the compatibility of determinant lines with convolution used in \cite[proof of Proposition~2.10]{CW19}.
On the component indexed by $\lambda$,
tensoring with $\mathscr L$ and shifting by
$[-\operatorname{val}_t\det A]$ defines a monoidal autoequivalence
$\mathsf T$ of $\mathcal C_{n,r}$.  Indeed,
$\mathscr L$ has central weight $\operatorname{val}_t\det A$;
the indicated shift is precisely the shift required by the
Koszul-perverse normalization.  Its inverse tensors with
$\mathscr L^{-1}$ and uses the opposite shift.  On the initial
objects,
\begin{equation}
\label{eq:gln-determinant-action}
\begin{aligned}
 \mathsf T(\mathcal P_{\omega_k,\ell})
     &=\mathcal P_{\omega_k,\ell+1},&
 \mathsf T(\mathsf U_1)&=\mathsf D\odot\mathsf U_1,\\
 \mathsf T(\mathsf U_a)&=\mathsf U_a\quad(a\geq2),&
 \mathsf T(\mathsf D)&=\mathsf D,\qquad
 \mathsf T(\mathsf H_a)=\mathsf H_a.
\end{aligned}
\end{equation}
For the second equality, the normalized product subtracts the
loop shift $n$ from the product with
$\mathsf D=(\det A)[-n]\{n\}$, since
$(\alpha_d,\alpha_{u_1})=n$.

All monomials in the following lemma are based monomials.

\begin{lemma}
\label{lem:gln-frozen-rescaling}
Let two quantum seeds have the same frozen variables and let their
extended exchange matrices and commutation matrices be
$(B,L)$ and $(\bar B,\bar L)$.  Suppose
\[
 C=\begin{pmatrix}I&0\\F&I\end{pmatrix},\qquad
 \bar B=CB,\qquad L=C^{\mathsf T}\bar L C,
\]
where the first block corresponds to mutable indices, and suppose
their toric frames satisfy $M(a)=\bar M(Ca)$.
Then, after any common mutation sequence, the corresponding mutable
variables differ by invertible frozen monomials and powers of the
quantum parameter.  The two seeds generate the same quantum cluster
algebra when frozen variables are inverted.
\end{lemma}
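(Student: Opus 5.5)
The plan is to prove, by induction on the length of the mutation sequence, that the relation between the two seeds is preserved under a single mutation. The invariant is that the two seeds stand in the stated block relation: for some integer matrix $F$ with $C=\begin{pmatrix}I&0\\F&I\end{pmatrix}$, we have $\bar B=CB$ and $L=C^{\mathsf T}\bar LC$. The toric frames are to be related by $M(a)=\bar M(Ca)$. Since $C$ fixes the mutable block and only adds frozen coordinates, $Ce_k=e_k+\sum_f F_{fk}e_f$ for mutable $k$. Thus $M(e_k)=\bar M(e_k+Fe_k)$, and by \eqref{eq:toric-frame-product} this is a power of $\vloop$ times $\bar X_k$ times an invertible frozen monomial. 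This gives the first assertion at the initial seed. The second assertion follows once the invariant persists: every cluster variable of one pattern is then a unit multiple of the corresponding variable of the other. Since frozen variables are common and inverted, the two quantum cluster algebras coincide.

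For the inductive step I fix a mutable $k$. First I will check the matrices. The principal part of $B$ equals that of $\bar B$, because $CB$ changes only the frozen rows, where $\bar b_{fj}=b_{fj}+\sum_i F_{fi}b_{ij}$. Applying \eqref{eq:matrix-mutation} then shows that $\mu_k(\bar B)=C'\mu_k(B)$ with a new $F'$. The natural candidate is $F'=FE$, where $E$ is the mutable block of the Berenstein--Zelevinsky matrix $E_k$ (sign $\epsilon$ chosen suitably), that is, $F'_{f\cdot}=F_{f\cdot}$ with column $k$ replaced by $-F_{fk}+\sum_i F_{fi}[-b_{ik}]_+$ or a similar expression. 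Rather than verifying \eqref{eq:matrix-mutation} entrywise, I will use the factorization $\mu_k(B)=E_kBF_k$ of \cite[Prop.~3.4]{BZ05}, with the same $E_k,F_k$ for $\bar B$. The sign $\epsilon$ can be chosen once for both seeds, because the principal parts agree. This gives $\mu_k(\bar B)=(E_k^{\bar B}CE_k^{B,-1})\mu_k(B)$. The step that needs care is showing that $C':=\bar E_kCE_k^{-1}$ is again block unitriangular of the stated shape; this is a block computation, since both $E_k$'s differ only in the $k$th column's frozen entries. The commutation relation $\Lambda'=E_k^{\mathsf T}\Lambda E_k$ of \cite[Prop.~4.7]{BZ05} then transports $L=C^{\mathsf T}\bar LC$ to $L'=C'^{\mathsf T}\bar L'C'$.

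Next come the toric frames. By \eqref{eq:quantum-seed-mutation}, $X'_k=M(c_k^+)+M(c_k^-)$. Using $M=\bar M\circ C$, I rewrite this as $\bar M(Cc_k^+)+\bar M(Cc_k^-)$. The difference $Cc_k^\pm-\bar c_k^\pm$ is a frozen vector, and it is the same for both signs up to the term $-Fe_k$. This is exactly because $\bar b_{fk}-b_{fk}=(FB)_{fk}$ and $[x]_+-[-x]_+=x$. Factoring out the frozen monomial with \eqref{eq:toric-frame-product} then gives $X'_k=\vloop^{m}\bar M(g)\bar X'_k$. One must check that the two $\vloop$-exponents agree, so that a single factor $\vloop^m\bar M(g)$ comes out of both summands. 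This follows from compatibility: frozen columns of $\widetilde B^{\mathsf T}\Lambda$ vanish, so $g$ pairs identically with $\bar c_k^+$ and $\bar c_k^-$ under $\bar L$. The full frame identity $M'(a)=\bar M'(C'a)$ then follows because a toric frame is determined by its values on a basis \cite[Prop.~4.7]{BZ05}.

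I expect the main obstacle to be the bookkeeping of the frozen shift: showing that $C'$ has the required unitriangular form, and that the $\vloop$-exponents of the two exchange monomials match after extracting the frozen factor. For both I rely on the compatibility condition \eqref{eq:BZ-compatibility-convention} with its zero frozen block.
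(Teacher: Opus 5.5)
Your proposal is correct and is essentially the paper's argument. The paper makes the same one-step check: the frozen vector $\zeta=Cc_k^+-\bar c_k^+=Cc_k^--\bar c_k^-$ does not depend on the sign, and compatibility gives $\bar L(\zeta,\bar b_k)=0$, so $M'(e_k)=\bar M'(e_k+\zeta)$; it then cites \cite[Propositions~2.7--2.8]{CHL24} for propagating the block relation, which you instead verify directly via $C'=\bar E_kCE_k^{-1}$ and $\Lambda'=E_k^{\mathsf T}\Lambda E_k$. One small correction: your first guess $F'=FE$ omits the frozen term that comes from $\bar E_k\neq E_k$, but your actual definition $C'=\bar E_kCE_k^{-1}$ includes it and gives exactly $C'e_k=e_k+\zeta$, so the matrix and frame invariants match.
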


\begin{proof}
This follows from the quantum quasi-homomorphism criterion and
its compatibility with mutation \cite[Propositions~2.7--2.8]{CHL24},
with all frozen variables inverted.
The same argument works over $\mathbb Z[v^{\pm1}]$: if
$c_k^\pm=-e_k+[\pm b_k]_+$ and
$\bar c_k^\pm=-e_k+[\pm\bar b_k]_+$, then
\[
 \zeta=Cc_k^+-\bar c_k^+=Cc_k^--\bar c_k^-
\]
is frozen, and compatibility gives $\bar L(\zeta,\bar b_k)=0$.
Thus the two exchange summands have the same normalization factor,
and the mutated frames satisfy $M'_k=\bar M'(e_k+\zeta)$.
The cited mutation argument iterates this relation, using only
integral lattice maps, frozen monomials, and powers of the quantum
parameter. Its proof does not require the indecomposability convention
in that source. Applying it to both $C$ and $C^{-1}$ gives the equality
over $\mathbb Z[v^{\pm1}]$ after frozen inversion.
\end{proof}

\begin{proposition}
\label{prop:gln-translation-stability}
The automorphism induced by $\mathsf T$ preserves
$\mathcal A_{n,r}$.  Consequently
\[
 [\mathcal P_{\omega_k,\ell}]\in\mathcal A_{n,r}
 \qquad(1\leq k<n,\ \ell\in\mathbb Z).
\]
\end{proposition}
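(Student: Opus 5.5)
The plan is to show that the ring automorphism $\tau$ of $K_0(\mathcal C_{n,r})\cong\mathcal U_{n,r}$ induced by the monoidal autoequivalence $\mathsf T$ is a \emph{frozen rescaling} of the initial seed in the sense of Lemma~\ref{lem:gln-frozen-rescaling}. We then conclude that $\tau$ carries $\mathcal A_{n,r}$ into itself, and likewise $\tau^{-1}$.

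\textbf{Step 1: the seed defined by $\tau$.} By \eqref{eq:gln-determinant-action}, $\tau$ sends the initial variables as follows:
\[
 [\mathsf Z_k]\mapsto[\mathsf Y_k],\qquad [\mathsf Y_k]\mapsto[\mathcal P_{\omega_k,2}],\qquad [\mathsf U_1]\mapsto[\mathsf D\odot\mathsf U_1],
\]
and it fixes $[\mathsf U_a]$ for $a\geq2$, as well as $[\mathsf D]$ and $[\mathsf H_a]$. Since $\mathsf T$ is monoidal and exact, $\tau$ is a $\Z[v^{\pm1}]$-algebra automorphism. The images $\tau([M_i])$ therefore form a toric frame with the same form $\Lambda_{n,r}$ and exchange matrix $\widetilde B_{n,r}$, because $\tau$ preserves the commutation relations. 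Its cluster algebra is $\tau(\mathcal A_{n,r})$.

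\textbf{Step 2: identify this seed inside the mutation pattern up to frozen rescaling.} It is tempting to say that $[\mathsf Y_k]$ is the initial variable at $y_k$ and that $[\mathcal P_{\omega_k,2}]=[\mathsf Z_k']$ is the mutation at $z_k$. But the map $z_k\mapsto y_k$ is not a seed isomorphism of the initial seed. I would therefore proceed as follows.

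First, compute the seed $\Sigma'=\mu_{z_1}\cdots\mu_{z_{n-1}}(\Sigma_0)$. The $z$-vertices are pairwise nonadjacent: $e_{z_k}$ pairs to zero with $e_{z_j}$ for $j\neq k$, by \eqref{eq:gln-Darboux-directions}. Hence these mutations commute, and Theorem~\ref{thm:cat-initial-exchanges} gives the variables $\mathsf Z'_k=\mathcal P_{\omega_k,2}$.

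Next, compare $\Sigma'$, with its vertices relabeled $z_k\leftrightarrow y_k$ and the $u_a$ fixed, against the image seed $\tau(\Sigma_0)$. Let $C$ be the matrix recording $\tau$ on exponent vectors in the $\alpha$-basis. At $u_1$ it adds $\alpha_d$, and on the remaining directions it is a permutation combined with a unimodular change. I expect $\tau$ to be realized on the lattice $\mathbb L_{n,r}$ by $x_i\mapsto x_i+p_i$, $p_i\mapsto p_i$, up to a frozen correction; this is the Whittaker image of the determinant twist. That map sends $\alpha_{z_k}\mapsto\alpha_{y_k}$ and $\alpha_{u_1}\mapsto\alpha_{u_1}+\alpha_d$. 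It preserves the skew form $(\ ,\ )$ and maps each $e_i$ to the corresponding mutated exchange direction.

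With these identifications, verify the hypotheses $\bar B=CB$ and $L=C^{\mathsf T}\bar LC$ of Lemma~\ref{lem:gln-frozen-rescaling} by direct computation with \eqref{eq:gln-exchange-vectors} and the duality \eqref{eq:gln-duality}. Lemma~\ref{lem:gln-frozen-rescaling} then shows that $\tau(\Sigma_0)$ and $\Sigma'$ generate the same quantum cluster algebra once frozen variables are inverted. This gives $\tau(\mathcal A_{n,r})=\mathcal A_{n,r}$.

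\textbf{Step 3: fallback if the relabeling does not match exactly.} If the relabeling fails, use a different matching: take $\tau(\Sigma_0)$ to agree with a seed reached by a longer sequence, namely a "Q-system translation" along the $y/z$ bipartite belt, composed with the mutations at the $u_a$. The exchange sequences \eqref{eq:cat-q-first}--\eqref{eq:cat-q-second} for all $\ell$ show that $\mathcal P_{\omega_k,\ell+1}$ and $\mathcal P_{\omega_k,\ell-1}$ arise by bipartite mutation from $\mathcal P_{\omega_k,\ell}$. The outer vertices $u_a$ differ only by the frozen factor $\mathsf D$, and that factor is absorbed by Lemma~\ref{lem:gln-frozen-rescaling}.

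\textbf{Main obstacle and conclusion.} I expect the hardest step to be the exact matrix check. One must confirm that the mutated $\widetilde B$ at $\Sigma'$ coincides with the image of $\widetilde B_{n,r}$ up to the lower-triangular frozen block $F$, with the $u_1$-row carrying the $\alpha_d$ shift, and that the $v$-powers in based monomials agree.

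Once $\tau^{\pm1}$ preserve $\mathcal A_{n,r}$, we obtain
\[
 [\mathcal P_{\omega_k,\ell}]=\tau^{\ell}([\mathsf Z_k])\in\mathcal A_{n,r}\qquad\text{for all }\ell\in\Z.
\]
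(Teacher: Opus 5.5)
Your proposal follows the paper's proof: mutate simultaneously at the pairwise nonadjacent $z_k$ (obtaining $\mathcal P_{\omega_k,2}$ from the Q-system sequences), interchange the labels $y_k,z_k$, and compare the result with the $\mathsf T$-image of the initial seed through Lemma~\ref{lem:gln-frozen-rescaling} with $C=I+E_{d,u_1}$, which gives $\mathsf T(\mathcal A_{n,r})=\mathcal A_{n,r}$ and hence also the statement for $\mathsf T^{-1}$. The matrix check you defer does go through: all mutable rows and $h$-rows are restored, and only the $d$-row changes, to $(\mathbf 1_r^{\mathsf T},e_{n-1}^{\mathsf T},-2e_{n-1}^{\mathsf T})$, so $\bar B=CB$; your symplectic map $x_i\mapsto x_i+p_i$ indeed sends $\alpha_{z_k}\mapsto\alpha_{y_k}$ and $\alpha_{u_1}\mapsto\alpha_{u_1}+\alpha_d$, and the fallback in Step~3 is unnecessary.
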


\begin{proof}
The vertices $z_1,\ldots,z_{n-1}$ are pairwise nonadjacent.
The initial Q-system exchange sequences identify their simultaneous
mutation with the replacement
$\mathcal P_{\omega_k,0}\mapsto\mathcal P_{\omega_k,2}$.
After interchanging the labels $y_k$ and $z_k$, the resulting seed
has mutable objects
\[
 \mathsf U_a,\quad
 \mathcal P_{\omega_k,2},\quad
 \mathcal P_{\omega_k,1}.
\]
By \eqref{eq:gln-determinant-action}, applying $\mathsf T$ to the
initial seed gives this collection with $\mathsf U_1$ replaced by
$\mathsf D\odot\mathsf U_1$.

Put
$l=n-1$, let $K$ be the Cartan matrix of type $A_l$, and set
$J=\mathbf1_r e_l^{\mathsf T}$ and
$H=(0\ I_{r-1})$.  In the order $(u,y,z,d,h)$,
\begin{equation}
\label{eq:gln-block-exchange-matrix}
 B=\begin{pmatrix}
 0&J&-J\\
 -J^{\mathsf T}&0&K\\
 J^{\mathsf T}&-K&0\\
 \mathbf1_r^{\mathsf T}&0&-e_l^{\mathsf T}\\
 -H&0&0
 \end{pmatrix}.
\end{equation}
Mutating all $z$-columns and interchanging the $y,z$ labels restores
all mutable rows and all $h$-rows.  The $d$-row changes from
$(\mathbf1_r^{\mathsf T},0,-e_l^{\mathsf T})$ to
$(\mathbf1_r^{\mathsf T},e_l^{\mathsf T},-2e_l^{\mathsf T})$.
For example, before relabeling, its only new entries are
$-2e_l^{\mathsf T}$ in the $y$-block and
$e_l^{\mathsf T}$ in the $z$-block; the mutable-row calculation
uses the tridiagonal entries $2,-1$ of $K$.
Thus the resulting extended matrix is
\begin{equation}
\label{eq:gln-translation-matrix}
 \bar B=CB,\qquad C=I+E_{d,u_1}.
\end{equation}
The toric frame obtained by applying $\mathsf T$ has the same
commutation matrix as the initial frame, since $\mathsf T$ is a
graded monoidal autoequivalence.  Its relation to the mutated frame
is multiplication of the $u_1$ variable by the based frozen
monomial $d$, hence its matrix is $C^{\mathsf T}\bar\Lambda C$.
Lemma~\ref{lem:gln-frozen-rescaling} shows that the image of the initial
seed under $\mathsf T$ generates the same algebra as the mutated seed.
Therefore $\mathsf T(\mathcal A_{n,r})=\mathcal A_{n,r}$.
Applying all positive and negative powers of $\mathsf T$ to the
initial $z_k$ variables proves the assertion.
\end{proof}

\subsection{Determinant dressings and mutation sequences}
\label{subsec:gln-mixed-baxter}

Put $q=v^{-1}$, $h_1=1$, and
$d_{\mathrm{raw}}=\prod_{i=1}^n w_i=(-1)^n[\mathsf D]$.
Write $L_a$ for the shift of $h_a$ for $2\leq a\leq r$. Thus
\[
 D_iw_j=q^{2\delta_{ij}}w_jD_i,\qquad
 L_ah_b=q^{2\delta_{ab}}h_bL_a,
\]
and all other character--shift commutators are trivial. Set
\[
 U_1=D_1\cdots D_n\prod_{a=2}^rL_a,\qquad
 U_a=L_a^{-1}\quad(a\geq2),\qquad
 U_S=\prod_{a\in S}U_a.
\]
For a subset $I\subseteq[1,n]$, write $W_I=(w_i)_{i\in I}$ and put
$w_I=\prod_{i\in I}w_i$,
$D_I=\prod_{i\in I}D_i$, and
$\delta_I=\prod_{i\in I,j\notin I}(1-w_j/w_i)$.
For $0\leq k\leq n$ and $\ell\in\mathbb Z$, let
\begin{equation}
\label{eq:gln-mixed-M}
 M_{k,S,\ell}=
 \sum_{|I|=k}
 \frac{w_I^\ell
       \prod_{a\in S}\prod_{j\notin I}(1-qh_a/w_j)}
      {\prod_{i\in I,j\notin I}(1-w_j/w_i)}
 D_IU_S^{-1}.
\end{equation}
For a Laurent polynomial $f$ symmetric separately in
$(w_i)_{i\in I}$ and $(w_j)_{j\notin I}$, let $M_{k,S,f}$
denote the same sum with $w_I^\ell$ replaced by $f$; its coefficients
may involve the frozen characters $h_a^{\pm1}$.
Let $\gamma_a$ be the central coweight acting by $t$ on the $a$th summand of $N_{n,r}$
and trivially on the others. Then $\omega_n=\sum_a\gamma_a$, and
$\lambda_{k,S}=\omega_k-\sum_{a\in S}\gamma_a$.
The displayed operators are the dressed minuscule monopoles on
$\lambda_{k,S}$ for $0<k<n$. Write
$M_{k,S,\ell}^{\mathbf j}$ for the operator with dressing
$e_k^\ell\prod_b e_{j_b}$ in the variables $W_I$. Here $e_j$ denotes
the $j$th elementary symmetric polynomial in these $k$ variables,
with $e_0=1$ and $e_j=0$ for $j<0$ or $j>k$. In particular
$M_{k,\varnothing,\ell}=E_{k,\ell}$ and
\[
 P_a:=M_{0,\{a\},0}
       =\prod_{i=1}^n(1-qh_a/w_i)U_a^{-1}
\]
is the monopole operator indexed by $-\gamma_a$. For $k=n$,
$M_{n,S,f}=f(w)U_{S^c}$; in particular
$M_{n,S,\ell}=d_{\mathrm{raw}}^\ell U_{S^c}$ and $M_{n,[1,r],f}=f(w)$.

\begin{proposition}
\label{prop:gln-mixed-determinant-membership}
For every subset $S\subseteq[1,r]$, every $1\leq k<n$, and every
$\ell\in\mathbb Z$, the ordinary quantum cluster algebra
$\mathcal A_{n,r}$ contains $M_{k,S,\ell}$. It also contains
the operators $U_a$ and $P_a$ for every $a$.
For $\ell=0,1$, each $M_{k,S,\ell}$ is a quantum cluster variable
multiplied by an invertible frozen monomial and a unit
$\pm q^m$.
\end{proposition}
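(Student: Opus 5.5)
The plan is to produce every operator by explicit mutation sequences from the initial seed, then use the autoequivalence $\mathsf T$ of Proposition~\ref{prop:gln-translation-stability} to pass from $\ell\in\{0,1\}$ to all $\ell\in\mathbb Z$.

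\textbf{Step 1: the operators $U_a$, $P_a$ and the case $S=\varnothing$.} In the Whittaker realization, the initial variables $[\mathsf U_a]$ are the abelian monopoles $U_a$ up to the signs fixed by $\rho$ in Proposition~\ref{prop:gln-simple-frame}. Hence $U_a\in\mathcal A_{n,r}$. The operators $E_{k,\ell}=M_{k,\varnothing,\ell}$ are $[\mathcal P_{\omega_k,\ell}]$ up to $\pm q^m$, and these classes lie in $\mathcal A_{n,r}$ by Proposition~\ref{prop:gln-translation-stability}.

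\textbf{Step 2: the cases $\ell=0,1$ and $S\neq\varnothing$, by Baxter sequences.} For $S\neq\varnothing$ and $\ell\in\{0,1\}$, I would follow the Baxter mutation sequences of \cite[Sec.~5.3]{SS19} for a framed $GL_n$ node. The first step is the initial mutation at $u_a$, given by \eqref{eq:cat-outer-first}--\eqref{eq:cat-outer-other}. It yields $\mathsf U'_a=\mathsf D\odot\mathcal P_{\omega_{n-1}-\gamma_a,0}$, which is $M_{n-1,\{a\},1}$ times a frozen monomial and a unit. Next I would mutate successively along $z_{n-1},y_{n-1},z_{n-2},\ldots$. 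Through the Whittaker transform, each new variable is a dressed minuscule monopole $M_{k,\{a\},\ell}$ with $\ell\in\{0,1\}$, possibly times a frozen monomial. I would check the recursion for this directly in the quantum torus. The exchange relation is a quantum $Q$-system identity: the product of $M_{k,\{a\},\ell}$ with the old variable equals a two-term sum, and the factor $\prod_{j\notin I}(1-qh_a/w_j)$ telescopes across the Cauchy-type denominators $\delta_I$. For general $S$, I would compose the sequences for the individual $a\in S$. Since the factors in $\prod_{a\in S}$ multiply, the mutation for $a'$ applied after the one for $a$ adds one further linear factor. Running the sequence down to $k=0$ gives $P_a=M_{0,\{a\},0}$ as a cluster variable up to frozen and unit factors.

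\textbf{Step 3: all $\ell$ via $\mathsf T$.} By \eqref{eq:gln-determinant-action}, $\mathsf T$ multiplies the dressing by $w_I$ and fixes the frozen objects, up to the frozen factor $\mathsf D$ on $\mathsf U_1$. Therefore $\mathsf T$ sends $M_{k,S,\ell}$ to $M_{k,S,\ell+1}$ times an invertible frozen monomial and a unit. I would verify this on the localization formula: twisting by $\mathscr L$ on the $\lambda_{k,S}$ component multiplies by $w_I$, and by a power of $d_{\mathrm{raw}}$ from the $-\gamma_a$ parts. Proposition~\ref{prop:gln-translation-stability} shows $\mathsf T^{\pm1}$ preserves $\mathcal A_{n,r}$, so all $\ell\in\mathbb Z$ follow from $\ell=0$.

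\textbf{Main obstacle.} The hard part is Step 2: identifying the Whittaker images of the mutated variables with exactly \eqref{eq:gln-mixed-M}, with the correct frozen monomials and signs. I would first use the induction on $k$ and the recursion for the $y/z$ mutations in \cite[Sec.~5.3]{SS19} as the source. Then I would transport signs through $\rho$ using the oddness of $R_{n,r}$ (Proposition~\ref{prop:gln-odd-refinement}, Lemma~\ref{lem:dilog-transport}). Any discrepancy there is only a unit $\pm q^m$ or frozen factor, which the statement allows.
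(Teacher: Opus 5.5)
Your skeleton matches the paper's. You use the Baxter sequence $(u_a,z_{n-1},y_{n-1},\ldots,z_1,y_1)$ of \cite[Lemma~5.6 and Cor.~5.7]{SS19}, repeated for the elements of $S$. You then use the automorphism induced by $\mathsf T$ to pass from $\ell\in\{0,1\}$ to all $\ell$. Your Step~3 is the paper's normal-ordering formula: $\gamma(M_{k,S,\ell})=(-q)^kM_{k,S,\ell+1}$, or $(-q)^{n-k}d_{\mathrm{raw}}^{-1}M_{k,S,\ell+1}$ when $1\in S$.

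The gap is Step~2, which carries the real content of the proposition. You assert that each mutated variable is a frozen multiple of some $M_{k,\{a\},\ell}$, and you plan to verify this by ``quantum $Q$-system'' identities, but you write down no identity. A direct check would need the exchange matrices and commutation forms of all $2n-1$ intermediate seeds of each sequence. For $|S|>1$ it would also need those of the seeds produced by earlier reversals, and you do not show that such a seed still has the shape to which the next Baxter sequence applies.

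The one identification you state is also wrong. $\mathsf U'_a=\mathsf D\odot\mathcal P_{\omega_{n-1}-\gamma_a,0}$ corresponds, up to a unit, to $d_{\mathrm{raw}}M_{n-1,\{a\},0}$. Its dressing $w_Iw_j$ ($j\notin I$) differs from the dressing $w_I$ of $M_{n-1,\{a\},1}$ by the non-frozen factor $w_j$. The operator $M_{n-1,\{a\},1}$ appears only at the next mutation, at $z_{n-1}$.

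The paper avoids all intermediate bookkeeping with two ingredients missing from your plan.

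First, a lattice argument. Put $a_S=\sum_{a\in S}\alpha_{u_a}$. The reversed directions $\mathbf e^S$ have the explicit unimodular dual basis: $\alpha^S_{u_a}=-\alpha_{u_a}$ for $a\in S$ and $\alpha_{u_a}$ otherwise, $\alpha^S_{y_k}=\alpha_{y_k}-a_S$, and $\alpha^S_{z_k}=\alpha_{z_k}-a_S$. The vectors produced by signed lattice mutation along the Baxter sequences have the same pairings with $\mathbf e^S$. Hence they differ from these only by frozen vectors. So the final cluster variables are the based monomials $Z_{\alpha^S_j}$ of the reversed chart, up to frozen monomials and $\pm q^m$.

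Second, an intertwining identity. The chart transition of the whole sequence is $\operatorname{Ad}_{Q_S^{-1}}$, where $Q_S$ is the product of Baxter operators. \cite[Prop.~5.11 and Cor.~5.12]{SS19} give $W\circ\operatorname{Ad}_{Q_S^{-1}}=\operatorname{Ad}_{\Phi_S^{-1}}\circ W$ with $\Phi_S=\prod_{a\in S}\prod_i\Psi_q(h_a/w_i)$. The images of $Z_{\alpha^S_{z_k}}$ and $Z_{\alpha^S_{y_k}}$ are $E_{k,0}U_S^{-1}$ and $E_{k,1}U_S^{-1}$ up to units. One application of $\Psi_q(q^2x)=(1-qx)\Psi_q(x)$ then yields $M_{k,S,0}$ and $M_{k,S,1}$, and sends $U_a^{-1}$ to $P_a$, for every $S$ at once.

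Your direct route could in principle be completed, since an exchange relation determines the new variable in the skew field. As proposed, however, it does not establish the identification.
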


\begin{proof}
Put $\bar p_1=0$ for this argument. Reversing
the arrows indexed by $S$ replaces the mutable directions by
\[
 e_{u_a}^S=
 \begin{cases}
  \bar p_a-p_1,&a\in S,\\
  p_n-\bar p_a,&a\notin S,
 \end{cases}
 \qquad e_{y_k}^S=e_{y_k},\qquad e_{z_k}^S=e_{z_k}.
\]
The formula for $a=1$ is obtained by retaining the rank-one framing
character until the final specialization, as in the proof of
Theorem~\ref{thm:gln-upper-Coulomb}. In that extended lattice the
framing vector is an additional radical frozen direction. The
framing character splits off, so its specialization to one commutes
with every mutation; the following bases and pairings are written
in the resulting quotient lattice.

Write $a_S=\sum_{a\in S}\alpha_{u_a}$. An integral basis of
$\mathbb L_{n,r}$ satisfying the compatibility identities below is
\begin{equation}
\label{eq:gln-reversed-A-basis}
 \alpha_{u_a}^S=
 \begin{cases}-\alpha_{u_a},&a\in S,\\
               \alpha_{u_a},&a\notin S,
 \end{cases}
 \qquad
 \alpha_{y_k}^S=\alpha_{y_k}-a_S,\qquad
 \alpha_{z_k}^S=\alpha_{z_k}-a_S,
\end{equation}
with $\alpha_d^S=\alpha_d$ and
$\alpha_{h_a}^S=\alpha_{h_a}$. The change of basis is integral
and unimodular. Direct pairing gives
\[
 (e_i^S,\alpha_j^S)=\delta_{ij}
 \quad(i,j\text{ mutable}),\qquad
 (e_i^S,\alpha_d)=(e_i^S,\alpha_{h_a})=0.
\]
Indeed, for $a\in S$, the vector $e_{u_a}^S$ pairs to $-1$ with
$\alpha_{u_a}$ and to zero with $\alpha_{u_b}$ for $b\ne a$;
it pairs to $-1$ with each $\alpha_{y_k}$ and $\alpha_{z_k}$.
Subtracting $a_S$ therefore gives the asserted duality. The
directions $e_{y_k}$ and $e_{z_k}$ pair trivially with every $\alpha_{u_a}$.

By \cite[Lemma~5.6 and Corollary~5.7]{SS19}, reversing the arrow indexed
by $a$ is realized by the following Baxter sequence of seed mutations:
\[
 (u_a,z_{n-1},y_{n-1},\ldots,z_1,y_1).
\]
After this sequence the label $u_a$ occupies the old $y_1$
position, $y_k$ occupies the old $z_k$ position, and $z_k$
occupies the old $y_{k+1}$ position for $k<n-1$, with $z_{n-1}$
in the old $u_a$ position. The labels $u_b$ with $b\ne a$ are unchanged.
The same statement applies after reversing any other subset of
arrows, so successive mutation sequences reach the direction set $\mathbf e^S$.

Let $\widehat\alpha_j^S$ be the basis vectors obtained by the
corresponding signed lattice mutations and this relabeling. They have
the same pairings with $\mathbf e^S$ as the vectors in
\eqref{eq:gln-reversed-A-basis}. Consequently
\[
 \widehat\alpha_j^S-\alpha_j^S
 \in \bigoplus_{f\in I_{\mathrm{fr}}}\mathbb Z\alpha_f.
\]
Expansion in the full basis
\eqref{eq:gln-reversed-A-basis} shows that every mutable
coefficient of the difference vanishes. Frozen vectors are
unchanged by mutation. Thus, after the chart map induced by this mutation sequence, the
based monomial for each $\alpha_j^S$ is a cluster variable
multiplied by an invertible frozen monomial and a scalar $\pm q^m$, $m\in\mathbb Z$.
Applying $\rho^{-1}$ gives the same assertion in $\mathcal T_+$;
the scalar changes only by a sign.

It remains to determine these monomials under the Whittaker
transform $W$. Let $Q_S$ be the product of the bi-fundamental Baxter operators
of \cite[Def.~5.8]{SS19} for the reversed arrows, and put
\[
 \Phi_S=\prod_{a\in S}\prod_{i=1}^n\Psi_q(h_a/w_i),\qquad
 \Psi_q(q^2x)=(1-qx)\Psi_q(x).
\]
We use pullback from the reversed chart to the original chart.
In the convention of \cite[Corollary~5.9]{SS19}, conjugation by $Q_S$
maps the universally Laurent algebra of the original chart to that of
the reversed chart. Thus the pullback is $\operatorname{Ad}_{Q_S^{-1}}$.
Proposition~5.11 of \cite{SS19} gives
\begin{equation}
\label{eq:gln-inverse-Baxter-intertwining}
 W\circ\operatorname{Ad}_{Q_S^{-1}}
   =\operatorname{Ad}_{\Phi_S^{-1}}\circ W.
\end{equation}
For several arrows the factors commute by
\cite[Corollary~5.12]{SS19}. The Whittaker isomorphism for $N=0$
extends to the ambient fraction fields, where
\cite[equation~(5.21)]{SS19} gives the intertwining identity.
The factors $U_S^{-1}$ below are taken in these fields.

By the initial Whittaker formulas, the based monomials with
exponents $\alpha_{z_k}^S$ and $\alpha_{y_k}^S$ have images
$E_{k,0}U_S^{-1}$ and $E_{k,1}U_S^{-1}$, respectively, up to
units $\pm q^m$. The shift $D_IU_S^{-1}$ sends $h_a/w_j$ to
$q^2h_a/w_j$ precisely when $a\in S$ and $j\notin I$; for
$a\in S$ and $j\in I$ it fixes this ratio. The difference
equation therefore yields
\[
 \operatorname{Ad}_{\Phi_S^{-1}}
   (E_{k,\ell}U_S^{-1})=M_{k,S,\ell}.
\]
This proves the membership and cluster-variable assertions for
$\ell=0,1$. For $a\in S$, the same computation sends $U_a^{-1}$ to $P_a$;
it fixes $U_b$ for $b\notin S$.
The operators $U_a$ already occur in the initial seed.

Finally use the automorphism induced by $\mathsf T$ in
Proposition~\ref{prop:gln-translation-stability}. In the difference-operator realization it is
$\gamma(w_i)=w_i$, $\gamma(h_a)=h_a$,
$\gamma(D_i)=-qw_iD_i$, and $\gamma(L_a)=L_a$.
Normal ordering gives
\[
 \gamma(M_{k,S,\ell})=
 \begin{cases}
  (-q)^k M_{k,S,\ell+1},&1\notin S,\\
  (-q)^{n-k}d_{\mathrm{raw}}^{-1}M_{k,S,\ell+1},&1\in S.
 \end{cases}
\]
The factors $(-q)^k$ and $(-q)^{n-k}d_{\mathrm{raw}}^{-1}$ are invertible. Both $\gamma$ and $\gamma^{-1}$ preserve
$\mathcal A_{n,r}$, so all integer determinant dressings follow.
\end{proof}

\subsection{The remainder ideal}

Let $B=\mathbb Z[q^{\pm1},h_2^{\pm1},\ldots,h_r^{\pm1}]$, put
$h_1=1$, and fix $S\subseteq[1,r]$ of cardinality $s$.
For $W=(w_1,\ldots,w_k)$, write
\[
 R_k=B[W^{\pm1}]^{\mathfrak S_k},\qquad
 e_j=e_j(W),\qquad
 \chi_k(x)=\prod_{i=1}^k(x-w_i),\qquad
 F_S(x)=\prod_{a\in S}(x-qh_a).
\]
Let $J_{k,S}\subseteq R_k$ be the ideal generated by the
coefficients of the remainder of $F_S(x)$ on division by the monic
polynomial $\chi_k(x)$.

\begin{lemma}
\label{lem:remainder-ideal}
Let $2\leq k\leq n$. Suppose a subalgebra $A$ of the ambient
difference-operator algebra contains $B$, every $E_{1,m}$, and
$M_{k-1,S,f}$ for every $f\in R_{k-1}$.
Then $A$ contains $M_{k,S,g}$ for every $g\in J_{k,S}$.
\end{lemma}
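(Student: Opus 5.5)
The plan is to produce $M_{k,S,g}$ by a twisted commutator of $E_{1,m}$ with $M_{k-1,S,f}$. The twist is chosen so that the terms in which a shift $D_j$ is applied twice cancel. What survives is a sum over $k$-subsets $J$ whose coefficient is a Lagrange-interpolation expression for $F_S$ modulo $\chi_k$.

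\textbf{Step 1: expand the two products.} Write
\[
E_{1,m}=\sum_j \frac{w_j^m}{\prod_{l\ne j}(1-w_l/w_j)}\,D_j,
\]
and expand $M_{k-1,S,f}$ as in \eqref{eq:gln-mixed-M}. Move all shift operators to the right using $D_iw_j=q^{2\delta_{ij}}w_jD_i$ and $L_ah_b=q^{2\delta_{ab}}h_bL_a$. The products $E_{1,m}M_{k-1,S,f}$ and $M_{k-1,S,f}E_{1,m}$ then split into two kinds of terms.
\begin{itemize}
\item Off-diagonal terms, with $j\notin I$. These carry $D_{I\cup\{j\}}U_S^{-1}$.
\item Diagonal terms, with $j\in I$. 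These carry $D_j^2D_{I\setminus j}U_S^{-1}$.
\end{itemize}
On a diagonal term, the two orders differ only by the rescaling $w_j\mapsto q^{\pm2}w_j$, applied either to $w_j^m$ or to the coefficient of $M$. Both of these are monomial in $w_j$ once $f$ is taken to be a monomial-type dressing. My plan is to take a combination
\[
E_{1,m}M_{k-1,S,f}-q^{c}\,M_{k-1,S,f}E_{1,m}
\]
and choose $c$ so that the diagonal parts cancel.

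If $f$ is not homogeneous in $w_j$, I would instead use two values $m,m'$. The element $E_{1,m}$ together with a Laurent polynomial in $e_1(W)$ lies in $A$, so linear combinations over $m$ absorb the discrepancy. Concretely, I would work with the $B$-span of all products $E_{1,m}M_{k-1,S,f}$ and $M_{k-1,S,f}E_{1,m}$ for $f\in R_{k-1}$ and $m\in\Z$. I would show that the diagonal parts of these products span the same space, so their differences are purely off-diagonal.

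\textbf{Step 2: identify the off-diagonal part.} Fix a $k$-subset $J=I\sqcup\{j\}$. The factor $\prod_{a\in S}(1-qh_a/w_j)$ that was present in $M_{k-1}$ for $j\notin I$ drops out of the product over the complement of $J$. This leaves $w_j^{-s}F_S(w_j)$ as an extra numerator. Combining denominators, the combination with coefficient $D_JU_S^{-1}$ is exactly the $J$-summand of some $M_{k,S,g}$, with
\[
g=\sum_{j\in J}\frac{F_S(w_j)\,w_j^{m-s}\,f(W_{J\setminus j})}{\prod_{i\in J\setminus j}(1-w_i/w_j)}
\]
up to a unit and a harmless $q$-rescaling.

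\textbf{Step 3: show these $g$ generate $J_{k,S}$.} By Lagrange interpolation at the roots of $\chi_k$, the remainder of $F_S$ modulo $\chi_k$ is
\[
\sum_{j\in J}F_S(w_j)\prod_{i\ne j}\frac{x-w_i}{w_j-w_i}.
\]
Its coefficients are therefore expressions of exactly the shape of $g$, with $f$ ranging over the elementary symmetric functions $e_p(W_{J\setminus j})$ and $m$ adjusted. Using $f\in R_{k-1}$ and all $m\in\Z$, I get every coefficient of the remainder.

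To get the whole ideal $J_{k,S}$ I also need products $h\cdot(\text{coefficient})$ with $h\in R_k$. For this, write $h$ as a polynomial in $w_j$ and the elementary symmetric functions of $W_{J\setminus j}$, and absorb it into $w_j^m$ and $f$. This is where allowing arbitrary $f\in R_{k-1}$ and all $m$ is needed.

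\textbf{Main obstacle.} I expect the main difficulty to be the cancellation of the diagonal $D_j^2$ terms in Step 1. The $q$-power twist has to be uniform across all $I$. If a single twist does not work, I would fall back on showing that the diagonal terms lie in the span of $E_{1,m}M_{k-1,S,f'}$ with $f'$ independent of $j$. Failing that, I would argue that the $D_j^2$-components of an element of $A$ that is known to lie in the Coulomb branch must vanish.
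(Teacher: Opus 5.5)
\textbf{There is a genuine gap.} The twisted commutator in your Step 1 cannot prove the lemma over $B=\mathbb Z[q^{\pm1},h^{\pm1}]$, and your Step 2 misstates what that commutator produces.

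The diagonal cancellation you name as the main obstacle does in fact work. For $f=e_{k-1}^\ell$, the $D_b^2D_{I\setminus b}$ coefficients of the two orders have identical denominators and differ by a uniform power of $q$. The failure is in what survives. At $q=1$ the difference-operator algebra is commutative, so every $B$-combination of elements $XY-q^cYX$ specializes to zero there. Hence if such a combination equals $M_{k,S,g}$, then $g$ vanishes at $q=1$. The lemma, however, requires for example $g=1$ when $S=\varnothing$, because then $J_{k,S}=R_k$.

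Concretely, take $n=k=2$ and $S=\varnothing$. Then $E_{1,m}E_{1,0}-q^{-2m}E_{1,0}E_{1,m}$ is $0$ for $m=0,1$ and equals $(q^{-4}-q^{-2})E_{2,1}$ for $m=2$. Your Step 2 instead predicts a unit times a Lagrange dressing, which at $m=0$ is $1$. The cause is that the off-diagonal terms of the two orders carry different shifted factors:
\begin{itemize}
\item one order gives $\prod_{i\in I}(1-q^2w_b/w_i)$ and $\prod_a(1-q^{-1}h_a/w_b)$;
\item the other gives $\prod_{i\in I}(1-q^2w_i/w_b)$ and $\prod_a(1-qh_a/w_b)$.
\end{itemize}
These do not cancel termwise and do not combine into your formula. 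Your last fallback is also false. Products such as $E_{1,m}M_{k-1,S,f}$ genuinely have nonzero components on the nonminuscule coweight $\omega_1+\omega_{k-1}$.

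\textbf{The missing idea} is a combination taken in a single order, using elementary-symmetric dressings on the $(k-1)$-factor.
\begin{itemize}
\item For $f\in R_k$, expand $f(W_{I\sqcup b})$ using $e_j(W_{I\sqcup b})=e_j(W_I)+w_be_{j-1}(W_I)$.
\item Form $\sum_{p=0}^{k-1}(-q^2)^pM_{k-1,S,(\cdot)e_p}E_{1,t-p}$, with compensating powers of $q$ when $1\in S$.
\item For a fixed pair $(I,b)$, the $p$-sum equals $\prod_{i\in I}(1-q^2w_i/z)$, where $z=q^2w_b$ if $b\in I$ and $z=w_b$ otherwise.
\item When $b\in I$ this product vanishes identically. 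When $b\notin I$ it cancels the shifted denominator $\prod_{i\in I}(1-q^2w_i/w_b)$ of $E_1$.
\item The unshifted factor $\prod_{a\in S}(1-qh_a/w_b)$ survives.
\end{itemize}
The result is $M_{k,S,fH_t}$ with unit coefficient, where $H_t=\sum_b w_b^t\prod_{a\in S}(1-qh_a/w_b)/\prod_{i\ne b}(1-w_b/w_i)$. For $n=k=2$ and $S=\varnothing$ this reads $E_{1,0}E_{1,t}-q^2E_{1,1}E_{1,t-1}=M_{2,\varnothing,H_t}$, which already gives $D_1D_2$ at $t=0$.

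Once you have these identities, your Step 3 is essentially the paper's finish. The remainder coefficients are Lagrange sums of this shape; equivalently, $H_{s+1},\ldots,H_{s+k}$ generate $J_{k,S}$ by the unimodular trace pairing. Ideal multiples are then absorbed into $f$ and $t$.
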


\begin{proof}
For every integer $t$, define
\[
 H_t(W)=\sum_{b=1}^k
 \frac{w_b^t\prod_{a\in S}(1-qh_a/w_b)}
 {\prod_{i\ne b}(1-w_b/w_i)}.
\]
These are symmetric Laurent polynomials.  More precisely, let
$\lambda:R_k[x]/(\chi_k)\longrightarrow R_k$ take the coefficient
of $x^{k-1}$ in the unique representative of degree less than $k$.
Since $e_k$ is a unit, $x$ is invertible in this quotient.  Lagrange
interpolation gives
\begin{equation}
\label{eq:H-trace}
 H_t=(-1)^{k-1}e_k\,
       \lambda\bigl(x^{t-s-1}F_S(x)\bigr).
\end{equation}
This identity can be checked after adjoining the distinct roots and
therefore holds in $R_k$.

Let $f=e_k^\ell\prod_{b=1}^m e_{j_b}$, set
$\delta=\mathbf1_{1\in S}$, and put
$m_{p,\epsilon}=\ell-p+|\epsilon|+t$.
We have the identity
\begin{equation}
\label{eq:general-mixed-recurrence}
 M_{k,S,fH_t}
 =\sum_{p=0}^{k-1}\sum_{\epsilon\in\{0,1\}^m}
 (-q^2)^p q^{2\delta m_{p,\epsilon}}
 M_{k-1,S,\ell}^{(\mathbf j-\epsilon,p)}
 E_{1,m_{p,\epsilon}}.
\end{equation}
For $S=\varnothing$ and $t=0$, this is the identity in
\cite[Lemma~2.18]{CW19}, in our normalization.
To check this identity, fix the $(k-1)$-element set $J$ in the
first factor and the index $b$ in the second. The shift $D_JU_S^{-1}$
sends $w_b$ to $q^{2\mathbf1_{b\in J}-2\delta}w_b$.
The factor $q^{2\delta m_{p,\epsilon}}$ compensates for the common scaling
$w_b\mapsto q^{-2\delta}w_b$ in the determinant dressing.
With $z=q^2w_b$ for $b\in J$ and $z=w_b$ otherwise,
the sum over $p$ is
$\prod_{i\in J}(1-q^2w_i/z)$. It vanishes when $b\in J$ and
cancels the shifted denominator from the second factor when $b\notin J$.
For $I=J\sqcup\{b\}$, the expansion
$e_j(W_I)=e_j(W_J)+w_be_{j-1}(W_J)$ gives $f(W_I)$.
Summing the remaining coefficients over $b\in I$ gives
$H_t(W_I)$. The calculation also applies at $k=n$.
Since these monomials $f$ span $R_k$ over $B$,
the hypotheses give $M_{k,S,fH_t}\in A$ for all $f\in R_k$.

The pairing $(u,v)\mapsto\lambda(uv)$ on $R_k[x]/(\chi_k)$ is
unimodular.  In the basis $1,x,\ldots,x^{k-1}$ its matrix has
zero entries for row plus column less than $k-1$, and has $1$ on
the antidiagonal.  Consequently the $k$ elements
$\lambda(x^jF_S)$, $0\leq j<k$, generate the same ideal as the
coefficients of the remainder of $F_S$.  By
\eqref{eq:H-trace}, these differ by a common unit from
$H_{s+1},\ldots,H_{s+k}$. Equation~\eqref{eq:general-mixed-recurrence}
therefore gives $M_{k,S,g}\in A$ for every $g\in J_{k,S}$.
\end{proof}

The quotient $R_k/J_{k,S}$ is the universal factorization ring in
which $\chi_k$ divides $F_S$.  Three cases suffice below:
\begin{enumerate}
\item If $s<k$, the remainder is the monic polynomial $F_S$ itself,
so $J_{k,S}=R_k$.
\item If $s=k$, division has zero remainder precisely when
$\chi_k=F_S$.  Thus $R_k/J_{k,S}$ is generated by $B$.
\item If $s=k+1$, write $F_S=\chi_k(x)(x-z)$ in the quotient.
Comparing constant coefficients gives
$z=(\prod_{a\in S}qh_a)e_k^{-1}$.  Comparing the other
coefficients successively expresses each $e_j$ as a polynomial
over $B$ in $z$.  Thus $R_k/J_{k,S}$ is generated as a
$B$-module by the integer powers of $e_k$.
\end{enumerate}
\subsection{Dressings in two variables}

\begin{lemma}
\label{lem:point-multiplication}
Let $a\notin T$ and set $S=T\cup\{a\}$.  For every $f\in R_k$, one has
\begin{equation}
\label{eq:point-product}
 M_{k,T,f}P_a
 =M_{k,S,\,f\prod_{i=1}^k(1-q^{-1}h_a/w_i)}.
\end{equation}
\end{lemma}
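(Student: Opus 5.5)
This is a direct normal-ordering computation in the difference-operator algebra. Expand $M_{k,T,f}$ by \eqref{eq:gln-mixed-M}, with $w_I^\ell$ replaced by $f$, as a sum over $|I|=k$ of
\[
 c_I(w,h)\,D_IU_T^{-1},\qquad
 c_I=\frac{f(W_I)\prod_{b\in T}\prod_{j\notin I}(1-qh_b/w_j)}
        {\prod_{i\in I,j\notin I}(1-w_j/w_i)}.
\]
Right multiplication by $P_a=\prod_{i=1}^n(1-qh_a/w_i)U_a^{-1}$ gives, term by term,
\[
 c_I\,D_IU_T^{-1}\prod_i(1-qh_a/w_i)\,U_a^{-1}.
\]
We move the coefficient of $P_a$ to the left through $D_IU_T^{-1}$. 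Since $a\notin T$, the factor $U_T^{-1}$ involves only $L_b$ for $b\ne a$ (together with $D_1\cdots D_n$ when $1\in T$). The plan is to record exactly how conjugation by $D_IU_T^{-1}$ rescales the ratio $h_a/w_i$, using the commutation rules $D_iw_j=q^{2\delta_{ij}}w_jD_i$ and $L_bh_c=q^{2\delta_{bc}}h_cL_b$.

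In the basic case $1\notin T$ and $a\ne1$, only $D_I$ acts nontrivially. It sends $w_i\mapsto q^2w_i$ for $i\in I$, so $(1-qh_a/w_i)$ becomes $(1-q^{-1}h_a/w_i)$ for $i\in I$ and is unchanged for $j\notin I$. The product therefore equals
\[
 c_I\prod_{i\in I}(1-q^{-1}h_a/w_i)\prod_{j\notin I}(1-qh_a/w_j)\,D_IU_T^{-1}U_a^{-1},
\]
and $U_T^{-1}U_a^{-1}=U_S^{-1}$ because the $U_b$ commute. The factor over $j\notin I$ supplies the new $a$-factor in the numerator of $M_{k,S}$. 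The factor over $i\in I$ is the symmetric dressing $\prod_{i=1}^k(1-q^{-1}h_a/w_i)$ evaluated on $W_I$, which multiplies $f$. This gives \eqref{eq:point-product} termwise.

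The main obstacle is the bookkeeping when $1\in T$ or $a=1$. If $1\in T$, then $U_T^{-1}$ contains $(D_1\cdots D_n)^{-1}\prod_{b\ge2}L_b^{-1}$, which rescales every $w_i$ by $q^{-2}$ and every $h_b$ by $q^{-2}$. Since $h_a$ and all $w_i$ rescale by the same factor for $a\ge2$, the ratio $h_a/w_i$ is unchanged and the previous computation applies. If $a=1$, then $h_1=1$ is not rescaled, and I need to check whether the ratio $1/w_i$ still behaves correctly under the $D$'s in $U_T^{-1}$. Here $1\notin T$, so no global $D$-shift occurs; only $D_I$ acts and the argument is identical. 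I would also verify the case $1\in T$, $a\ge2$ separately, by confirming that $L_a$ does not occur in $U_T^{-1}$ modulo the factor for $b=1$. Since $U_1$ contains $\prod_{b\ge2}L_b$, including $L_a$, this factor rescales $h_a$ by $q^{-2}$, and this is exactly the global rescaling that matches $w_i\mapsto q^{-2}w_i$, so the ratio is again preserved.

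Finally, $U_a^{-1}$ sits at the far right, so no further commutation is needed. Summing over $I$ proves the lemma.
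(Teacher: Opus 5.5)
Your proposal is correct and follows the paper's proof. In both, you commute $\prod_i(1-qh_a/w_i)$ leftward through $D_IU_T^{-1}$. The factor $U_T^{-1}$ fixes every ratio $h_a/w_i$: trivially if $1\notin T$, and through the common factor $q^{-2}$ on $w_i$ and $h_a$ if $1\in T$. Meanwhile $D_I$ turns $1-qh_a/w_i$ into $1-q^{-1}h_a/w_i$ exactly for $i\in I$.

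One small slip: your opening claim that $U_T^{-1}$ involves only $L_b$ with $b\neq a$ is false when $1\in T$, because $U_1$ contains $L_a$. Your later case analysis handles exactly this situation correctly.
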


\begin{proof}
Put $K_a=\prod_i(1-qh_a/w_i)$, so $P_a=K_aU_a^{-1}$.
Since $a\notin T$, the shift $U_T^{-1}$ fixes each ratio
$h_a/w_i$.  Indeed, if $1\in T$, its action on $w_i$ and on
$h_a$ has the same factor $q^{-2}$; if $a=1$, then $1\notin T$
and both are fixed.  The shift $D_I$ replaces $h_a/w_i$ by
$q^{-2}h_a/w_i$ exactly for $i\in I$.  Therefore
\[
 (D_IU_T^{-1})K_a
 =\Bigl(\prod_{i\in I}(1-q^{-1}h_a/w_i)
        \prod_{j\notin I}(1-qh_a/w_j)\Bigr)D_IU_T^{-1}.
\]
All $U_a$ commute.  Substituting this identity into the product
gives the assertion, with no shift of the dressing $f$.
\end{proof}

\begin{proposition}
\label{prop:rank-two-dressings}
For every $n\geq2$, every $r\geq1$, and every $S\subseteq[1,r]$,
the ordinary cluster algebra contains $M_{2,S,f}$ for every $f\in R_2$.
\end{proposition}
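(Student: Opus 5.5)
We argue by induction on $s=|S|$, using Lemma~\ref{lem:remainder-ideal} only for the base case and Lemma~\ref{lem:point-multiplication} for the inductive step. Throughout, $A=\mathcal A_{n,r}$ viewed inside the difference-operator realization. By Proposition~\ref{prop:gln-mixed-determinant-membership}, $A$ contains $B$ (through the invertible frozen characters and the scalars $q^{\pm1}$), every $U_a$ and $P_a$, and every $M_{k,S,\ell}$ with $1\leq k<n$ and $\ell\in\mathbb Z$. When $k=n$ the analogous operators $M_{n,S,\ell}=d_{\mathrm{raw}}^\ell U_{S^c}$ are also in $A$, since $d_{\mathrm{raw}}=(-1)^n[\mathsf D]$ and the $U_a$ are. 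Moreover $f\mapsto M_{k,S,f}$ is $B$-linear.

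\emph{Base case $S=\varnothing$.} We apply Lemma~\ref{lem:remainder-ideal} with $k=2$. Since $R_1=B[w^{\pm1}]$ is spanned over $B$ by the powers $w^\ell$, the hypothesis ``$M_{1,\varnothing,f}\in A$ for all $f\in R_1$'' reduces to $M_{1,\varnothing,\ell}=E_{1,\ell}\in A$, which holds. With $s=0<2$, the remainder of $F_\varnothing=1$ modulo $\chi_2$ is $1$, so $J_{2,\varnothing}=R_2$ by case~(1) following Lemma~\ref{lem:remainder-ideal}. Hence $M_{2,\varnothing,f}\in A$ for all $f\in R_2$. The same argument gives every $S$ with $s\leq1$ directly, though the inductive step below covers these as well.

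\emph{Inductive step.} Let $S=T\sqcup\{a\}$ and assume $M_{2,T,f}\in A$ for all $f\in R_2$. Put
\[
 g_a=(1-q^{-1}h_a/w_1)(1-q^{-1}h_a/w_2)=e_2^{-1}\bigl(e_2-q^{-1}h_ae_1+q^{-2}h_a^2\bigr)\in R_2 .
\]
Lemma~\ref{lem:point-multiplication} gives $M_{2,T,f'}P_a=M_{2,S,f'g_a}$, which lies in $A$ because $M_{2,T,f'}$ and $P_a$ do. We therefore have $M_{2,S,g}\in A$ for every $g$ in the ideal $(g_a)\subseteq R_2$.

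The key algebraic point is that
\[
 R_2=(g_a)+B[e_2^{\pm1}].
\]
To see this, note that $R_2=B[e_1,e_2^{\pm1}]$ and that $h_a$ is a unit of $B$, with $h_1=1$. Modulo $(g_a)$ we have $e_1\equiv q h_a^{-1}e_2+q^{-1}h_a$, so every polynomial in $e_1$ with coefficients in $B[e_2^{\pm1}]$ reduces to an element of $B[e_2^{\pm1}]$. Thus every $f\in R_2$ can be written as $f=g_af'+\sum_\ell c_\ell e_2^\ell$ with $c_\ell\in B$.

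The terms $M_{2,S,e_2^\ell}=M_{2,S,\ell}$ lie in $A$. For $n>2$ this is Proposition~\ref{prop:gln-mixed-determinant-membership}. For $n=2$ it is the identity $M_{2,S,\ell}=d_{\mathrm{raw}}^\ell U_{S^c}$ noted above. By linearity in the dressing, $M_{2,S,f}\in A$, which completes the induction.

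The step I expect to need the most care is the bookkeeping behind Lemma~\ref{lem:point-multiplication} for the indices $a=1$ and $1\in T$. There the shift $U_T^{-1}$ rescales the $w_i$, and one must check that it fixes $h_a/w_i$ exactly as stated in that lemma, so that the dressing $f$ is not shifted. I would also verify that the reduction $e_1\mapsto qh_a^{-1}e_2+q^{-1}h_a$ is integral over $B$, which holds because $h_a^{\pm1}\in B$. With these checks the proof uses no case analysis on $s$ beyond the base case.
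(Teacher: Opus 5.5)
Your proposal is correct and follows the paper's proof essentially step for step. The paper also inducts on $|S|$, gets the base case from Lemma~\ref{lem:remainder-ideal} with $J_{2,\varnothing}=R_2$, and uses Lemma~\ref{lem:point-multiplication} to obtain the principal ideal generated by $G_a=(1-c/w_1)(1-c/w_2)$ with $c=q^{-1}h_a$. It then reduces modulo $e_1=c+c^{-1}e_2$ to integer powers of $e_2$, which are covered by Proposition~\ref{prop:gln-mixed-determinant-membership} when $n>2$ and by $M_{n,S,\ell}=d_{\mathrm{raw}}^\ell U_{S^c}$ when $n=2$.
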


\begin{proof}
Induct on $|S|$.  For $S=\varnothing$, Lemma~\ref{lem:remainder-ideal}
gives the result: $F_\varnothing=1$ makes $J_{2,\varnothing}=R_2$,
and all dressings for $k=1$ are determinant dressings. Choose $a\in S$, put $T=S\setminus\{a\}$,
and set $c=q^{-1}h_a$, a unit of $B$.  The induction hypothesis
and \eqref{eq:point-product} give every dressing in the
principal ideal generated by
\[
 G_a=\prod_{i=1}^2(1-c/w_i)
     =1-ce_1e_2^{-1}+c^2e_2^{-1}.
\]
In $R_2/(G_a)$ one has $e_1=c+c^{-1}e_2$.  Hence this quotient
is generated as a $B$-module by the integer powers of $e_2$.
All corresponding determinant-dressed operators are already in
the cluster algebra.  For $k=n=2$, this last assertion follows
directly from $M_{n,S,\ell}=d_{\mathrm{raw}}^\ell U_{S^c}$.
Every dressing is therefore the sum of a $B$-linear combination
of integer powers of $e_2$ and an element of $(G_a)$; the corresponding
operators all belong to the cluster algebra.
\end{proof}

\subsection{Removing one linear factor}

Write $\delta_T=\mathbf 1_{1\in T}$.  For a subset $R\subseteq[1,r]$, choose
$a\in R$ and put $S=R\setminus\{a\}$. Let $2\leq k\leq n$ and $m=k-1$.
The product identity below
will give operators $M_{k,R,g}$ with dressings $g\in J_{k,S}$,
where $F_S(x)=F_R(x)/(x-qh_a)$.
For $m$ variables $W=(w_1,\ldots,w_m)$, set
$e_m=e_m(W)$, $\chi_W(z)=\prod_{i=1}^m(z-w_i)$ and $c=q^{-1}h_a$.
Define
\[
 Q_{W,a}(z)=\frac{(-1)^m}{e_m}
 \frac{z\chi_W(c)-q^2c\chi_W(z/q^2)}{z-q^2c}
 =\sum_{j=0}^{m-1}Q_{a,j}(W)z^j.
\]
The numerator vanishes at $z=q^2c$, so this is a polynomial with
coefficients in $R_m$.

\begin{proposition}
\label{prop:single-wall-erasure}
For every $f\in R_m$ and $t\in\mathbb Z$,
put, with all displayed scalar factors on the left,
\[
 C_t(f)=q^{2\delta_S t}M_{m,S,f}M_{1,\{a\},t}
 -\sum_{j=0}^{m-1}q^{2\delta_R(t+j)}
 M_{m,R,fQ_{a,j}}E_{1,t+j}.
\]
For $W=(w_1,\ldots,w_k)$, write $W_{\widehat b}=(w_i)_{i\ne b}$.
Define $\Psi_\nu$ for $\nu\in\mathbb Z$ by the formula below. Then
\begin{equation}\label{eq:single-wall-erasure}
 C_t(f)=(-1)^m q^{1-2m}h_a M_{k,R,\,e_k^{-1}\Psi_{t+m}(f)},
 \qquad
 \Psi_\nu(f)(W)=\sum_{b=1}^k
 \frac{f(W_{\widehat b})w_b^\nu
 \prod_{c'\in S}(1-qh_{c'}/w_b)}
 {\prod_{i\ne b}(1-w_b/w_i)}.
\end{equation}
\end{proposition}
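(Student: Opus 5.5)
This is a direct computation in the difference-operator realization, modelled on the verification of \eqref{eq:general-mixed-recurrence} in Lemma~\ref{lem:remainder-ideal}. I will expand both products in $C_t(f)$ as sums over pairs $(J,b)$. Here $J\subseteq[1,n]$ has $|J|=m$ and comes from the first factor, and $b$ is the index from $M_{1,\{a\},t}$ or $E_{1,t+j}$. After normal ordering, each term has the shape
\[
 (\text{rational function of }w,h)\,D_JD_bU_S^{-1}U_a^{-1},
\]
or the analogue with $U_R^{-1}$ in the second sum. Since $R=S\sqcup\{a\}$, both sums carry the same total shift $U_R^{-1}$.

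\textbf{Step 1: collapse the diagonal terms $b\in J$.} For these terms the shift is not of the form $D_I$ with $|I|=k$. Their coefficients must cancel between the two sums, or vanish by themselves. When $b\in J$, the shift $D_JU_S^{-1}$ sends $w_b$ to $q^{2-2\delta_S}w_b$, and this feeds into the factors $(1-qh_a/w_b)$ and $w_b^{t}$. The correction factors $q^{2\delta_S t}$ and $q^{2\delta_R(t+j)}$ exist precisely to absorb the common $q^{-2\delta}$ scaling, as in the earlier lemma. Once they are absorbed, the $b\in J$ coefficient of the first product is a multiple of
\[
 \chi_{W_J}(\cdot)\ \text{evaluated at}\ z=q^2w_b .
\]
Summing the second product over $j$ gives the polynomial $Q_{W_J,a}(z)$ at the same point. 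The defining numerator of $Q_{W,a}$ is $z\chi_W(c)-q^2c\,\chi_W(z/q^2)$. The term $\chi_W(z/q^2)$ vanishes when $z=q^2w_b$ with $b\in J$. So the two contributions should agree exactly, and their difference is zero. I will check the normalization $(-1)^m/e_m$ and the powers of $q$ term by term.

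\textbf{Step 2: the off-diagonal terms $b\notin J$.} Put $I=J\sqcup\{b\}$. For $b\notin J$ the shift is $D_IU_R^{-1}$, and the product of denominators
\[
 \prod_{i\in J,\,j\notin J}(1-w_j/w_i)\ \text{(shifted by $D_b$)}\qquad\text{and}\qquad \prod_{j\ne b}(1-w_j/w_b)
\]
recombines with $\delta_I$ as in Lemma~\ref{lem:remainder-ideal}. Two numerator factors appear. From $M_{m,S,f}$ we get $\prod_{c'\in S}\prod_{j\notin J}(1-qh_{c'}/w_j)$. From the $a$-factor of $M_{1,\{a\},t}$ we get $\prod_{j\ne b}(1-qh_a/w_j)$. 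Together they should reconstruct the $R$-numerator over $j\notin I$, times leftover factors at $j=b$ coming from $S$. These leftovers are exactly $\prod_{c'\in S}(1-qh_{c'}/w_b)$, the factor in $\Psi_\nu$. In the second sum the $R$-numerator is already present in $M_{m,R,\cdot}$. Its $j=b$ factors combine with $\sum_jQ_{a,j}w_b^{t+j}$, and the first term $z\chi_W(c)$ of the numerator of $Q$ should cancel against the first product. What remains is proportional to
\[
 \frac{q^2c\,\chi_{W_J}(w_b)}{w_b-q^2c}\cdot(1-qh_a/w_b)^{-1}\cdots .
\]
Since $q^2c=qh_a$, the factor $w_b-qh_a$ cancels the $a$-part of the numerator, leaving $\prod_{i\in J}(w_b-w_i)$. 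Dividing by $e_m(W_J)$ and using $e_m(W_J)\,w_b=e_k(W_I)$ produces the factor $e_k^{-1}$. The powers $w_b^{t}\cdot w_b^{m}$ give the exponent $\nu=t+m$. The constants $(-1)^m$, $q^{1-2m}$ and $h_a=qc$ collect from the normalization of $Q$ and from $q^2c\cdot q^{-2m}$ in $\chi_W(z/q^2)$.

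\textbf{Step 3: resum over $b\in I$.} For fixed $I$, summing over the choice of $b\in I$ with $J=I\setminus\{b\}$ gives $f(W_{\widehat b})$ and the Lagrange-type denominator $\prod_{i\ne b}(1-w_b/w_i)$. This is exactly $\Psi_{t+m}(f)(W_I)$. Symmetry in $W_I$ and in the complement holds, so the total is $M_{k,R,e_k^{-1}\Psi_{t+m}(f)}$.

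The main obstacle is Step 1 together with the scalar bookkeeping. The two sums use different normalization factors ($\delta_S$ versus $\delta_R$), and the case $a=1$ changes how $U_a^{-1}$ scales the $w$'s. I would therefore treat the cases $1\in S$, $a=1$, and $1\notin R$ separately, checking in each that the shift of $w_b$ by $U_R^{-1}$ is compensated by the $q^{2\delta(\cdot)}$ prefactors. Everything else is the partial-fraction identity defining $Q_{W,a}$.
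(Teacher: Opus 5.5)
Your plan matches the paper's proof. You expand both products in normal order over pairs $(J,b)$, cancel the $b\in J$ terms between the two sums, apply the partial-fraction identity defining $Q_{W,a}$ to the $b\notin J$ terms, and collect over $I=J\sqcup\{b\}$. The prefactors $q^{2\delta_S t}$ and $q^{2\delta_R(t+j)}$ absorb the common scaling of $w_b$ uniformly, including $a=1$, so your proposed case split is unnecessary. However, the shift bookkeeping you describe in Steps~1 and~2 is wrong in ways that matter, and Step~2 as written would not produce $\Psi_{t+m}(f)$.

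\textbf{Which factor is shifted.} With coefficients on the left, it is the coefficient of the \emph{second} factor that is conjugated by $D_JU_S^{-1}$ (resp.\ $D_JU_R^{-1}$). The first factor's denominator $\prod_{i\in J,\,j\notin J}(1-w_j/w_i)$ is not shifted by $D_b$. This conjugation has two effects:
\begin{itemize}
 \item it turns $(1-qh_a/w_j)$, for $j\in J\setminus\{b\}$, into $(1-c/w_j)$; note that the $b$-th term of $M_{1,\{a\},t}$ contains no factor $(1-qh_a/w_b)$;
 \item for $b\notin J$, it turns $\prod_{j\ne b}(1-w_j/w_b)$ into a product containing $\prod_{i\in J}(1-q^2w_i/w_b)$.
\end{itemize}
Step~1 is then exactly the identity $Q_{W_J,a}(q^2w_b)=\prod_{i\in J\setminus\{b\}}(1-c/w_i)$ for $b\in J$.

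\textbf{The remainder in Step~2.} The remainder involves $\chi_{W_J}(w_b/q^2)=q^{-2m}\prod_{i\in J}(w_b-q^2w_i)$, not $\prod_{i\in J}(w_b-w_i)$:
\[
 \prod_{i\in J}(1-c/w_i)-(1-qh_a/w_b)\,Q_{W_J,a}(w_b)
 =(-1)^mq^{1-2m}h_a\,\frac{\prod_{i\in J}(w_b-q^2w_i)}{w_b\,e_m(W_J)} .
\]
This $q^2$-shifted product is what cancels the shifted denominator $\prod_{i\in J}(1-q^2w_i/w_b)$, leaving $w_b^m$. The unshifted $\prod_{i\in J}(1-w_b/w_i)$ from the first factor survives as the Lagrange denominator in $\Psi$, and the remaining denominator factors assemble into $\delta_I$.

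If you use $\prod_{i\in J}(w_b-w_i)$, or put the shift on the first factor, these denominators do not cancel correctly. The sum over $b\in I$ is then not $\Psi_{t+m}(f)(W_I)$. Once these points are corrected, your Step~3 goes through as in the paper.
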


\begin{proof}
Expand the two products in normal order.  Fix the first selected set $I$,
of size $m$, and the second selected index $b$.  The factors
$q^{2\delta_S t}$ and $q^{2\delta_R(t+j)}$ cancel precisely the common
scaling of the determinant dressing in the second factor; this also covers $a=1$.
If $b\in I$, the remaining dressing is evaluated at $q^2w_b$ and
\[
 Q_{W_I,a}(q^2w_b)=\prod_{i\in I\setminus\{b\}}(1-c/w_i).
\]
Thus every term with $b\in I$ cancels. For $b\notin I$,
the remaining coefficients satisfy
\[
 \prod_{i\in I}(1-c/w_i)-(1-qh_a/w_b)Q_{W_I,a}(w_b)
 =(-1)^m q^{1-2m}h_a
 \frac{\prod_{i\in I}(w_b-q^2w_i)}{w_b e_m(W_I)}.
\]
This follows immediately on multiplying the defining identity for $Q$ by
$w_b-q^2c$. The factor $\prod_{i\in I}(w_b-q^2w_i)$ cancels the shifted denominator
$\prod_{i\in I}(1-q^2w_i/w_b)$ from the second factor.
The remaining denominator is
$\delta_{I\cup\{b\}}\prod_{i\in I}(1-w_b/w_i)$, and
$w_b^{m-1}/e_m(W_I)=w_b^m/e_k(W_{I\cup\{b\}})$.
The remaining product over $S$ is
$\prod_{c'\in S}(1-qh_{c'}/w_b)$.
Collecting the terms with the same $k$-element set gives the formula.
At $k=n$ the products indexed by variables outside $I\cup\{b\}$ are empty.
\end{proof}

\begin{lemma}\label{lem:erased-remainder-ideal}
Assume $M_{k-1,R,f}$ and $M_{k-1,S,f}$ belong to the ordinary
cluster algebra for every $f\in R_{k-1}$. Then
$M_{k,R,g}$ belongs to it for every $g\in J_{k,S}$, where
$J_{k,S}$ is the ideal of the coefficients of the remainder of
$F_S(x)=\prod_{a\in S}(x-qh_a)$ modulo $\chi_k(x)$.
\end{lemma}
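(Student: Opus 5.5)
The plan mirrors the proof of Lemma~\ref{lem:remainder-ideal}, with Proposition~\ref{prop:single-wall-erasure} taking the place of the recurrence \eqref{eq:general-mixed-recurrence}. Fix $a\in R$ and put $S=R\setminus\{a\}$, $m=k-1$. Every term in $C_t(f)$ is a product of operators that lie in the cluster algebra. Indeed, $M_{m,S,f}$ and $M_{m,R,fQ_{a,j}}$ lie there by hypothesis, since $Q_{a,j}\in R_m$. The operators $M_{1,\{a\},t}$ and $E_{1,t+j}$ are determinant dressings with $k=1$, so they lie there by Proposition~\ref{prop:gln-mixed-determinant-membership}. Hence $C_t(f)$ lies in the cluster algebra for every $f\in R_m$ and every $t\in\Z$. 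The scalar $(-1)^mq^{1-2m}h_a$ in \eqref{eq:single-wall-erasure} is a unit of $B$. Therefore
\[
M_{k,R,\,e_k^{-1}\Psi_\nu(f)}\in\mathcal A_{n,r}\qquad\text{for all }f\in R_m,\ \nu\in\Z .
\]

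Next I would identify the $B$-span of the dressings $e_k^{-1}\Psi_\nu(f)$. Take $f=e_k'^{\,\ell}\prod_b e_{j_b}'$, a monomial in the elementary symmetric functions of the $m$ variables. The expansions $e_j(W)=e_j(W_{\widehat b})+w_be_{j-1}(W_{\widehat b})$ can be inverted: $e_j(W_{\widehat b})=\sum_{i\geq0}(-w_b)^ie_{j-i}(W)$, and $e_m(W_{\widehat b})=e_k(W)/w_b$. So each $f(W_{\widehat b})$ is a finite $R_k$-linear combination of powers $w_b^{i}$, and its coefficients are independent of $b$. Substituting this into the definition of $\Psi_\nu$ gives
\[
\Psi_\nu(f)=\sum_i g_i\,H^S_{\nu+i},\qquad g_i\in R_k,
\]
where $H^S_t$ is the symmetric function in the proof of Lemma~\ref{lem:remainder-ideal}, built from $F_S$. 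In particular $f=1$ gives $\Psi_\nu(1)=H^S_\nu$.

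For the reverse inclusion I would run an induction on $f$, using that $e_k$ is a unit: this should show that the span contains $g\,H^S_t$ for every $g\in R_k$ and every $t$. By \eqref{eq:H-trace} and the unimodularity of the trace pairing $\lambda$, the elements $H^S_{s+1},\dots,H^S_{s+k}$ generate $J_{k,S}$ up to a common unit. Multiplying by $e_k^{-1}$ does not change the ideal. So every $g\in J_{k,S}$ is an $R_k$-combination of the $H^S_t$, and the corresponding $M_{k,R,g}$ lies in the cluster algebra.

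I expect the main obstacle to be this last step, showing that the $R_k$-multiples $gH^S_t$ are all reached, rather than only the specific combinations $\Psi_\nu(f)$. The triangularity suggested by the expansion of $e_m(W_{\widehat b})$ should settle it. Concretely, I would write $g$ as a polynomial in $e_1,\dots,e_k^{\pm1}$ and induct on the degree in $e_1,\dots,e_{k-1}$. The leading term of $\Psi_\nu(e_j')$ is $e_jH^S_\nu$, and the correction terms have lower degree, with $w_b$-powers absorbed by shifting $\nu$. If this triangularity fails, I would fall back on computing directly the ideal that the $\Psi_\nu(f)$ generate.
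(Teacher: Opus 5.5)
Your proof is correct. Its outer frame is the paper's: every term of $C_t(f)$ lies in the cluster algebra, so $M_{k,R,\,e_k^{-1}\Psi_\nu(f)}$ does for all $f\in R_{k-1}$ and $\nu\in\mathbb Z$. The unimodular pairing of Lemma~\ref{lem:remainder-ideal} then shows that the $H_t$ built from $F_S$ generate $J_{k,S}$.

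The difference is the direction of the key expansion. You rewrite $f(W_{\widehat b})$ in terms of the $e_j(W)$ and powers of $w_b$. That proves the inclusion you do not need, namely that each $\Psi_\nu(f)$ lies in the ideal generated by the $H_t$. You then have to recover the needed inclusion by a triangularity induction.

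The paper expands forward instead. From $e_j(W)=e_j(W_{\widehat b})+w_be_{j-1}(W_{\widehat b})$ and $e_k(W)=w_b\,e_{k-1}(W_{\widehat b})$, one writes $e_kg(W)=\sum_i f_i(W_{\widehat b})\,w_b^{\,i}$ with $f_i\in R_{k-1}$ independent of $b$. Hence $e_kgH_t=\sum_i\Psi_{t+i}(f_i)$, so $gH_t$ is visibly a finite sum of the reachable dressings $e_k^{-1}\Psi_{t+i}(f_i)$. No induction is needed.

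Your triangular system is just the inverse of this explicit expansion, so it does succeed and your fallback is unnecessary. If you keep it, induct on weighted degree, with $e_j$ of weight $j$. Carry powers of $e_k$ along via $e_{k-1}(W_{\widehat b})^{\pm1}=(e_k/w_b)^{\pm1}$ and absorb the resulting powers of $w_b$ by shifting $\nu$. Total degree in $e_1,\ldots,e_{k-1}$ does not work: each factor $e_j(W_{\widehat b})=\sum_i(-w_b)^ie_{j-i}(W)$ contributes terms $e_{j-i}$ with $0<i<j$, which have the same total degree as $e_j$.

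Your route does buy one extra fact: the dressings produced by Proposition~\ref{prop:single-wall-erasure} span exactly $J_{k,S}$ and nothing more. The lemma does not need this.
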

\begin{proof}
Proposition~\ref{prop:single-wall-erasure} produces operators with dressings $e_k^{-1}\Psi_\nu(f)$
for arbitrary $f$ and $\nu\in\mathbb Z$.
For $g=e_k^\ell\prod_\alpha e_{j_\alpha}$, expand separately for each
$b\in\{1,\ldots,k\}$:
\[
 g(W)=e_{k-1}(W_{\widehat b})^\ell w_b^\ell
 \prod_\alpha\bigl(e_{j_\alpha}(W_{\widehat b})
       +w_b e_{j_\alpha-1}(W_{\widehat b})\bigr).
\]
Apply this expansion to $e_k g$ to cancel the prefactor $e_k^{-1}$
in \eqref{eq:single-wall-erasure}.
Applying \eqref{eq:single-wall-erasure} term by term and shifting $\nu$
produces operators with dressings $gH_t$, for the Laurent polynomials
$H_t$ associated with $F_S$. The unimodular pairing in
Lemma~\ref{lem:remainder-ideal} shows that the $H_t$ generate $J_{k,S}$ as an ideal.
Each ideal multiple is therefore obtained from the displayed
convolution identities.
\end{proof}

\begin{theorem}
\label{thm:four-wall-dressings}
For every $n\geq2$ and $r\geq1$, if $|S|\leq5$, then
$M_{k,S,f}$ belongs to the ordinary cluster algebra for every
$1\leq k\leq n$ and every $f\in R_k$.
\end{theorem}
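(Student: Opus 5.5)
Induct on $k$, and for fixed $k$ on $|S|$, following the outline in the introduction. For $k=1$ every $f\in R_1=B[w_1^{\pm1}]$ is a $B$-combination of powers $w_1^\ell=e_1^\ell$. The operators $M_{1,S,\ell}$ lie in $\mathcal A_{n,r}$ by Proposition~\ref{prop:gln-mixed-determinant-membership}, so this case is immediate; when $n\ge3$ it needs $k=1<n$. For $k=2$ the statement is Proposition~\ref{prop:rank-two-dressings}, which needs no bound on $|S|$. Hence fix $3\le k\le n$ and assume the assertion for $k-1$ and all subsets of size at most five.

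For fixed $k$ I would induct on $s=|S|$ and write every $f\in R_k$ as an element of $J_{k,S}$ plus a representative of $R_k/J_{k,S}$. The ideal part comes from two sources. When $S=\varnothing$ or more generally $s<k$, Lemma~\ref{lem:remainder-ideal} applies directly, using the $k-1$ hypothesis for $S$ and the $E_{1,m}$, and the three-case list gives $J_{k,S}=R_k$, so we are done. When $s\ge1$ I would instead choose $a\in S$ and apply Lemma~\ref{lem:erased-remainder-ideal} with $R:=S$ and $S':=S\setminus\{a\}$. Its hypotheses are dressings for $k-1$ on $S$ and $S'$, which hold by the outer induction. It yields every $M_{k,S,g}$ with $g\in J_{k,S'}$, where $|S'|=s-1\le4$. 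This is exactly where the bound $|S|\le5$ enters: only the ideal of $F_{S'}$, of degree at most four, must be controlled.

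It remains to describe the quotient $R_k/J_{k,S'}$. If $s-1<k$, the quotient is zero by case (1). If $s-1=k$, case (2) shows it is generated by $B$. If $s-1=k+1$, case (3) shows it is spanned over $B$ by integer powers of $e_k$. Since $k\ge3$ and $s-1\le4$, only $s-1\in\{3,4\}$ can be nonzero, namely $(k,s-1)\in\{(3,3),(3,4),(4,4)\}$, and these fall under cases (2) and (3). In every case each $f$ is congruent modulo $J_{k,S'}$ to a $B$-combination of $e_k^\ell$, so it suffices to show that $M_{k,S,e_k^\ell}=M_{k,S,\ell}$ lies in the algebra.

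For $k<n$ this membership is Proposition~\ref{prop:gln-mixed-determinant-membership}. For $k=n$ we have $M_{n,S,\ell}=d_{\mathrm{raw}}^\ell U_{S^c}$, a product of the frozen $\pm\mathsf D^{\pm1}$ with the $U_a\in\mathcal A_{n,r}$. The main point to check is that the $B$-coefficients, which involve $q$ and $h_a^{\pm1}$, stay in $\mathcal A_{n,r}$. They do, because the $h_a$ are inverted frozen variables ($\mathsf H_a$), and $q=v^{-1}$ is a scalar. I would also check that the specialization $k=n$ of Lemma~\ref{lem:remainder-ideal} and Proposition~\ref{prop:single-wall-erasure} used above is valid, which the text asserts.
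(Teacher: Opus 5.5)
Your proposal is correct and follows the paper's proof. The structure matches: the base cases $k=1,2$; for $k\ge3$, Lemma~\ref{lem:erased-remainder-ideal} with $T=S\setminus\{a\}$, fed by level-$(k-1)$ dressings for both $S$ and $T$; the three factorization cases for $R_k/J_{k,T}$ when $|T|\le4$; and determinant dressings from Proposition~\ref{prop:gln-mixed-determinant-membership} or from $M_{n,S,\ell}=d_{\mathrm{raw}}^\ell U_{S^c}$. The differences are cosmetic: you also handle $0<|S|<k$ through Lemma~\ref{lem:remainder-ideal}, whereas the paper uses that lemma only for $S=\varnothing$, and your announced inner induction on $|S|$ is never actually used.
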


\begin{proof}
The assertion has been proved for $k=1,2$ and every subset $S$.
Induct on $k\geq3$.  If $S=\varnothing$, use Lemma~\ref{lem:remainder-ideal}
with $F_\varnothing=1$.
Otherwise choose $a\in S$ and put $T=S\setminus\{a\}$.
The induction hypothesis and Lemma~\ref{lem:erased-remainder-ideal}
give $M_{k,S,g}\in\mathcal A_{n,r}$ for every $g\in J_{k,T}$.
Since $|T|\leq4$ and $k\geq3$, the quotient $R_k/J_{k,T}$ is zero,
is generated by $B$, or is spanned over $B$ by integer powers of $e_k$. These are
exactly the three factorization cases following
Lemma~\ref{lem:remainder-ideal}. The operators $M_{k,S,\ell}$,
$\ell\in\mathbb Z$, belong to $\mathcal A_{n,r}$ for $k<n$ by
Proposition~\ref{prop:gln-mixed-determinant-membership}
and for $k=n$ by $M_{n,S,\ell}=d_{\mathrm{raw}}^\ell U_{S^c}$.
Thus every dressing is the sum of a $B$-linear combination of
integer powers of $e_k$ and an element of $J_{k,T}$; the corresponding
operators all belong to the cluster algebra.
\end{proof}

\subsection{Completion of the generation proof}

\begin{lemma}
\label{lem:gen-chamber}
In every closed chamber of the hyperplane arrangement defined by the weights of $N_{n,r}$ inside the
dominant Weyl chamber, the semigroup of integral coweights is generated
by the members of
\begin{equation}
\label{eq:gen-ray-set}
 \{\gamma_a,-\gamma_a:1\leq a\leq r\}
 \ \cup\ \{\lambda_{k,S}:1\leq k<n,\ S\subseteq[1,r]\}
\end{equation}
that lie in that chamber.
\end{lemma}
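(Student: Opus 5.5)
The plan is to linearize the chamber conditions in the coordinates cut out by the weights of $N_{n,r}$, and then to peel off one generator at a time by a greedy induction. Write a coweight lift as $(\mu_1,\ldots,\mu_n;m_1,\ldots,m_r)$ and put $x_{i,a}=\mu_i+m_a$. This is the pairing of the coweight with the weight $\varepsilon_i$ on the $a$th summand of $N_{n,r}$. It is unchanged on adding $\kappa$.

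\textbf{Step 1: the coordinates $x_{i,a}$ determine the coweight.} The map to the matrix $(x_{i,a})$ is injective on $X_*(T_{n,r})=\Z^{n+r}/\Z\kappa$. Indeed, if all $x_{i,a}=0$, then every $\mu_i$ equals $-m_a$ for every $a$, so the lift is a multiple of $\kappa$.

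\textbf{Step 2: describe the chambers in these coordinates.} A closed chamber $C$ inside the dominant chamber is given by dominance $\mu_1\geq\cdots\geq\mu_n$ together with a fixed sign $\sigma_{i,a}\in\{\geq0,\leq0\}$ for each entry $x_{i,a}$. A coweight lies in $C$ exactly when it is dominant and each nonzero entry has the sign prescribed by $C$.

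From this description I will use two observations.
\begin{itemize}
\item If $\lambda\in C$, and $\nu$ is dominant with every nonzero entry $x_{i,a}(\nu)$ sitting at a position where $x_{i,a}(\lambda)$ is nonzero of the same sign, then $\nu\in C$.
\item The same test applies to $\lambda-\nu$, entry by entry.
\end{itemize}

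\textbf{Step 3: the generators in these coordinates.} For $\lambda_{k,S}$ we have $x_{i,a}=\mathbf1_{i\leq k}-\mathbf1_{a\in S}$. For $\pm\gamma_b$ we have $x_{i,a}=\pm\delta_{ab}$.

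\textbf{Step 4: the induction.} Induct on $\mu_1-\mu_n=\sum_k(\mu_k-\mu_{k+1})$, which is independent of the lift.

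\emph{Base case.} If $\mu_1=\mu_n$, subtract a multiple of $\kappa$ to make $\mu=0$. Then $\lambda=\sum_a m_a\gamma_a$ and $x_{i,a}=m_a$. Each summand $m_a\gamma_a$ is $|m_a|$ copies of $\operatorname{sign}(m_a)\gamma_a$, and each such copy has its nonzero entries where $\lambda$ has entries of the same sign, so it lies in $C$.

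\emph{Inductive step.} Otherwise choose $k<n$ with $\mu_k>\mu_{k+1}$. Put
\[
 S=\{a:x_{k+1,a}(\lambda)<0\},
\]
and check the following.
\begin{itemize}
\item $\lambda_{k,S}\in C$. For $a\notin S$, the nonzero entries of $\lambda_{k,S}$ in column $a$ are the entries $+1$ in rows $i\leq k$. There $x_{i,a}(\lambda)\geq x_{k,a}(\lambda)\geq x_{k+1,a}(\lambda)+1\geq1$, using integrality and $\mu_k>\mu_{k+1}$. For $a\in S$, the nonzero entries are the entries $-1$ in rows $i>k$. There $x_{i,a}(\lambda)\leq x_{k+1,a}(\lambda)\leq-1$.
\item $\lambda-\lambda_{k,S}\in C$. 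It stays dominant, because only $\mu_k-\mu_{k+1}$ drops, by one. Entries equal to $x_{i,a}(\lambda)-1\geq0$ occur at positions where $\lambda$ had a positive entry. Entries equal to $x_{i,a}(\lambda)+1\leq0$ occur at positions where $\lambda$ had a negative entry. All other entries are unchanged. So every nonzero entry keeps a sign allowed by $C$.
\end{itemize}
Since $\mu_1-\mu_n$ drops by one, induction finishes the argument.

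\textbf{Main obstacle.} The delicate point is the choice of $S$ so that both $\lambda_{k,S}$ and the remainder stay in the \emph{closed} chamber. The columns with $x_{k+1,a}=0$ must be put outside $S$: putting them in $S$ would create entries $+1$ at wall positions that $C$ may require to be $\leq0$. The strict inequality $\mu_k>\mu_{k+1}$ together with integrality is exactly what makes the entries $x_{i,a}$ for $i\leq k$, $a\notin S$, at least one, so that they can absorb the subtraction.
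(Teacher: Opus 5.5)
Your proof is correct and is essentially the paper's argument in inductive form. The paper normalizes $\mu_n=0$ and writes the decomposition in closed form as $\sum_{t=1}^{D}\lambda_{k(t),S_t}$ plus $\pm\gamma_a$ correction terms, where $k(t)=\#\{i:\mu_i\geq t\}$ and $S_t=\{a:-m_a\geq t\}$; your greedy peeling produces exactly these layers if you always take the last descent $k$, since then $\mu_{k+1}=\mu_n=0$ and $S=\{a:m_a<0\}$ for the current $m$. The only difference is bookkeeping: the paper checks once that each summand's pairing $\mathbf 1_{t\leq\mu_i}-\mathbf 1_{t\leq -m_a}$ is zero or has the sign of $\mu_i+m_a$, while you keep the invariant that both the peeled layer and the remainder stay in $C$.
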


\begin{proof}
Represent a dominant coweight uniquely by
$(\mu_1,\ldots,\mu_n;m_1,\ldots,m_r)$ with
$\mu_1\geq\cdots\geq\mu_n=0$. The pairings with the weights of $N_{n,r}$ are
$\mu_i+m_a$.  Put $D=\mu_1$ and, for $1\leq t\leq D$, define
\[
 k(t)=\#\{i:\mu_i\geq t\},\qquad
 S_t=\{a:-m_a\geq t\}.
\]
Then $1\leq k(t)<n$ and
\begin{equation}
\label{eq:gen-layer-decomposition}
 (\mu;m)=\sum_{t=1}^D\lambda_{k(t),S_t}
 +\sum_{m_a>0}m_a\gamma_a
 +\sum_{m_a<-D}(-D-m_a)(-\gamma_a).
\end{equation}
To verify the formula, compare the coordinates.  The $\mu_i$
coordinate is the number of $t\leq\mu_i$. In the $m_a$ coordinate, the first sum
has coordinate $-\min(D,\max(0,-m_a))$, and the remaining two sums give
exactly the required correction.

The pairing of the weight indexed by $(i,a)$ with $\lambda_{k(t),S_t}$ is
$\mathbf1_{t\leq\mu_i}-\mathbf1_{t\leq-m_a}$.
As $t$ varies, these differences are either zero or have the same sign as
$\mu_i+m_a$. Each nonzero term in the last two sums has pairings of
that same sign with the weights of the summand of $N_{n,r}$ on which
it acts nontrivially.
Moreover each $\lambda_{k(t),S_t}$ is dominant. Thus every
summand lies in every closed chamber containing the given coweight,
including all chamber faces.
\end{proof}

\begin{proof}[Proof of Theorem~\ref{thm:integral-generation}]
First suppose $n=2$, with no restriction on $r$.
Proposition~\ref{prop:rank-two-dressings}, applied at $k=n=2$ and $S=[1,r]$, gives
\[
 M_{2,[1,r],f}=f(w_1,w_2)\in\mathcal A_{2,r}
\]
for every $f\in R_2$. Since $R_1=B[w_1^{\pm1}]$,
Proposition~\ref{prop:gln-mixed-determinant-membership}
also gives $M_{1,S,f}\in\mathcal A_{2,r}$ for every
$S\subseteq[1,r]$ and $f\in R_1$.

For arbitrary $n\geq2$ and $r\leq5$, apply
Theorem~\ref{thm:four-wall-dressings} at $k=n$ and $S=[1,r]$:
\[
 M_{n,[1,r],f}=f(w_1,\ldots,w_n)\in\mathcal A_{n,r}.
\]
The same theorem supplies $M_{k,S,f}$ for every $f\in R_k$ and every
$\lambda_{k,S}$. Thus, in either range of the theorem, $\mathcal A_{n,r}$
contains $R_n$ and every $M_{k,S,f}$ with $1\leq k<n$,
$S\subseteq[1,r]$, and $f\in R_k$.

For each $\lambda_{k,S}$ with $0<k<n$, the $GL_n$ factor of the stabilizer Levi is
$GL_k\times GL_{n-k}$, and its dressing ring is
$B[w_1^{\pm1},\ldots,w_n^{\pm1}]^{\mathfrak S_k\times\mathfrak S_{n-k}}$. By the calculation for symmetric Laurent polynomials
in \cite[proof of Proposition~2.7(2)]{SS19}, this ring is generated
over $R_n$ by the elementary symmetric functions in $w_1,\ldots,w_k$
and $e_k^{-1}$.
It follows that $M_{k,S,f}\in\mathcal A_{n,r}$ for every dressing $f$
in the representation ring of this stabilizer Levi.
Multiplication by a character is on the left, so it introduces no
additional difference-operator shift.
The monopole operators indexed by $\pm\gamma_a$ belong to the cluster algebra by
Proposition~\ref{prop:gln-mixed-determinant-membership}; their dressing
modules have rank one over $R_n$.

The generation result of
\cite[Proposition~2.3 and the discussion following it]{SS19}, applied
to Lemma~\ref{lem:gen-chamber}, now puts every Coulomb-branch generator
in $\mathcal A_{n,r}$. The trivially acting framing character splits
off in the Coulomb branch and in its coefficient ring, as in
Theorem~\ref{thm:gln-upper-Coulomb}; quotienting by that independent
character minus one preserves the generating equality.
Consequently $\mathcal K_{n,r}\subseteq\mathcal A_{n,r}$.
The opposite containment is the integral categorical cluster embedding
constructed above. The upper-algebra realization in
Theorem~\ref{thm:gln-upper-Coulomb} gives the stated equality with
$\mathcal U_{n,r}$.
\end{proof}

\section{Monoidal categorification}
\label{sec:complete-categorification}
\label{sec:family-categorification}

Write $\mathcal C_{n,r}$ for the Koszul-perverse heart of
$(G_{n,r},N_{n,r})$, with integral loop shifts and the normalization
of Section~\ref{sec:loop-rotation-cover}.

\begin{theorem}
\label{thm:family-categorification}
Suppose either $n=2$ and $r\geq1$, or $n\geq2$ and $1\leq r\leq5$.
For the compatible pair of Theorem~\ref{thm:gln-compatible-pair},
the map carrying the initial variables to the classes of
$\mathsf U_a$, $\mathsf Y_k$, $\mathsf Z_k$, $\mathsf D$, and
$\mathsf H_a$ induces an isomorphism of $\Z[\vloop^{\pm1}]$-algebras
\[
 \mathcal A_{n,r}=\mathcal U_{n,r}
 \xrightarrow{\ \sim\ } K_0(\mathcal C_{n,r}).
\]
Here the $r$ frozen character variables are inverted.
Every quantum cluster variable is represented by a real simple object,
every localized quantum cluster monomial by a simple normalized
convolution, and every finite mutation sequence by short exact
sequences in $\mathcal C_{n,r}$.
Consequently $\mathcal C_{n,r}$ is a monoidal categorification of
$\mathcal A_{n,r}$ in the sense of
Definition~\ref{def:monoidal-categorification}.
After adjoining half-integer loop shifts, it is also a monoidal
categorification in the convention of \cite[Def.~4.26]{CW19}.
\end{theorem}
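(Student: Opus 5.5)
The theorem will follow by assembling results already proved, in three steps: the algebra isomorphism, then the categorical properties of cluster objects, then the comparison with the Cautis--Williams convention.

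\textbf{Algebra isomorphism.} Theorem~\ref{thm:gln-upper-Coulomb} gives isomorphisms $\mathcal U_{n,r}\xrightarrow{\sim}\mathcal K_{n,r}\xleftarrow{\sim}K_0(\mathcal C_{n,r})$ over $\Z[\vloop^{\pm1}]$, with only the $r$ frozen characters inverted. By Proposition~\ref{prop:gln-simple-frame}, the composite sends the initial variables to $[\mathsf U_a],[\mathsf Y_k],[\mathsf Z_k],[\mathsf D],[\mathsf H_a]$. In the stated ranges, Theorem~\ref{thm:integral-generation} gives $\mathcal A_{n,r}=\mathcal K_{n,r}=\mathcal U_{n,r}$ inside this realization. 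Hence the map $\iota$ of the statement is an isomorphism $\mathcal A_{n,r}=\mathcal U_{n,r}\xrightarrow{\sim}K_0(\mathcal C_{n,r})$. One point needs checking: the identification used in the generation proof must be the same one as in Theorem~\ref{thm:gln-upper-Coulomb}, namely the Whittaker transform composed with $\rho$, with the rotation pullback $\psi$ included. With that, the inclusion $\mathcal A_{n,r}\subseteq\mathcal U_{n,r}$ from the quantum Laurent phenomenon is compatible with the ring map to $K_0$.

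\textbf{Categorical properties.} These come from Theorem~\ref{thm:cat-all-mutations}, which holds for all $n,r$. Every finite mutation sequence is realized by quantum monoidal seeds and short exact sequences. Every cluster variable is a real simple object, and every localized quantum cluster monomial is a simple normalized convolution with the shift of \eqref{eq:normalized-categorical-monomial}. That shift is, by construction, the one prescribed by the toric-frame product \eqref{eq:toric-frame-product}. Combined with the isomorphism above, this is exactly Definition~\ref{def:monoidal-categorification}: $\iota$ carries every quantum cluster monomial to the class of a simple object.

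\textbf{Convention of Cautis--Williams.} For the final sentence, pass to $\mathcal C_{n,r}^{1/2}$ using Lemma~\ref{lem:CW-half-shift-extension}. This gives $K_0(\mathcal C_{n,r}^{1/2})\cong\Z[\vloop^{\pm1/2}]\otimes K_0(\mathcal C_{n,r})$. Tensoring $\iota$ with $\Z[\vloop^{\pm1/2}]$ and applying \eqref{eq:CW-half-scalar-extension} identifies the source with $\mathcal A^{\mathrm{loc}}_{(L^{\mathrm{CW}},\widetilde B)}$, under $q_{\mathrm{CW}}=\vloop^{-1}$ and $L^{\mathrm{CW}}=-\Lambda$ (see Remark~\ref{rem:CW-sign-convention}). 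The same lemma says every mutation object lies in the summand $\epsilon=0$, so simple objects and exchange sequences carry over unchanged. What remains is to check that the quiver-sign convention of Remark~\ref{rem:CW-sign-convention} and the compatibility identity \eqref{eq:BZ-CW-source-notation} match \cite[Def.~4.26]{CW19}.

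I expect this sign and normalization bookkeeping to be the only real obstacle. In particular, the frozen-localized variant in \cite[Cor.~4.35]{CW19} must be stated with the same inverted frozen objects $\mathsf D$ and $\mathsf H_a$. The substantive inputs are all earlier results: the upper-algebra identification and the generation theorem.
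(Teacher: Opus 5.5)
Your proposal is correct and follows the paper's proof essentially step for step. You take the ring equality from Theorem~\ref{thm:integral-generation} together with Theorem~\ref{thm:gln-upper-Coulomb}, the initial classes from Proposition~\ref{prop:gln-simple-frame}, and the categorical realization of mutations, cluster variables, and monomials from Theorem~\ref{thm:cat-all-mutations}. For the final assertion, you pass to half-integer shifts via Lemma~\ref{lem:CW-half-shift-extension} and \eqref{eq:CW-half-scalar-extension}, just as the paper does.
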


\begin{proof}
Theorem~\ref{thm:integral-generation} gives the ring equality in both
ranges. Proposition~\ref{prop:gln-simple-frame} identifies the
initial classes, and Theorem~\ref{thm:cat-all-mutations} supplies the
categorical realization of every finite mutation sequence.
The commutation form is even and remains even under mutation, so all
normalized cluster monomials and exchange sequences use integral
loop shifts. The frozen objects are tensor-invertible in the heart,
which permits arbitrary integral exponents on these objects.

For the last assertion, put $R=\Z[\vloop^{\pm1}]$ and
$S=\Z[\vloop^{\pm1/2}]$.
Lemma~\ref{lem:CW-half-shift-extension} identifies
\[
 S\otimes_R\mathcal A_{n,r}
 \xrightarrow{\sim}S\otimes_RK_0(\mathcal C_{n,r})
 \xrightarrow{\sim}K_0(\mathcal C_{n,r}^{1/2}).
\]
The corresponding scalar extension of the quantum seed is the
Cautis--Williams convention of \eqref{eq:CW-half-scalar-extension}.
Every based cluster monomial already lies in $\mathcal A_{n,r}$
and is represented by a simple object of $\mathcal C_{n,r}$;
this object remains simple after a formal half-integer shift.
\end{proof}

For all $n\geq2$ and $r\geq1$, Theorems~\ref{thm:gln-upper-Coulomb}
and~\ref{thm:cat-all-mutations} identify $\mathcal U_{n,r}$ with
$K_0(\mathcal C_{n,r})$ and realize every cluster mutation.
Generation by the ordinary cluster algebra remains open for
$n\geq3$ and $r\geq6$.
The restriction $r\leq5$ enters only in the induction for $k\geq3$
in Theorem~\ref{thm:four-wall-dressings}: for $a\in S$, the quotient
$R_k/J_{k,S\setminus\{a\}}$ obtained by removing $x-qh_a$ from
$F_S(x)$ is spanned over $B$ by integer powers of $e_k$.
The first case beyond this argument is $k=3$, $|S|=6$:
$F_{S\setminus\{a\}}$ has degree $5=k+2$, outside the three
factorization cases used there.

\section*{Declaration of generative AI use}
Generative artificial intelligence (GenAI) tools were used to discuss
mathematical ideas, assist in checking the manuscript, and proofread the
text. The author takes full responsibility for the content of this article.

\begingroup
\renewcommand{\bibliofont}{\footnotesize}
\raggedright
\raggedbottom

\bibliographystyle{alpha}
\bibliography{references}

\begin{thebibliography}{KKKO18}

\bibitem[AB10]{AB10}
Dmitry Arinkin and Roman Bezrukavnikov.
\newblock Perverse coherent sheaves.
\newblock {\em Moscow Mathematical Journal}, 10(1):3--29, 2010.

\bibitem[AKS25]{AKS25}
Dylan G.~L. Allegretti, Hyun~Kyu Kim, and Peng Shan.
\newblock Monoidal categorification of genus zero skein algebras.
\newblock {\em arXiv preprint arXiv:2505.13332}, 2025.
\newblock arXiv:2505.13332v1.

\bibitem[Bar15]{Barwick15}
Clark Barwick.
\newblock On exact {$\infty$}-categories and the {Theorem of the Heart}.
\newblock {\em Compositio Mathematica}, 151(11):2160--2186, 2015.

\bibitem[BFM05]{BFM05}
Roman Bezrukavnikov, Michael Finkelberg, and Ivan Mirkovi\'c.
\newblock Equivariant homology and {$K$}-theory of affine {Grassmannians} and
  {Toda} lattices.
\newblock {\em Compositio Mathematica}, 141(3):746--768, 2005.

\bibitem[BFN18]{BFN18}
Alexander Braverman, Michael Finkelberg, and Hiraku Nakajima.
\newblock Towards a mathematical definition of {Coulomb} branches of
  3-dimensional {$\mathcal N=4$} gauge theories, {II}.
\newblock {\em Advances in Theoretical and Mathematical Physics},
  22(5):1071--1147, 2018.

\bibitem[BFN19]{BFN19}
Alexander Braverman, Michael Finkelberg, and Hiraku Nakajima.
\newblock Ring objects in the equivariant derived {Satake} category arising
  from {Coulomb} branches.
\newblock {\em Advances in Theoretical and Mathematical Physics},
  23(2):253--344, 2019.
\newblock With an appendix by Gus Lonergan.

\bibitem[BFZ05]{BFZ05}
Arkady Berenstein, Sergey Fomin, and Andrei Zelevinsky.
\newblock Cluster algebras {III}: Upper bounds and double {Bruhat} cells.
\newblock {\em Duke Mathematical Journal}, 126(1):1--52, 2005.

\bibitem[BGS96]{BGS96}
Alexander Beilinson, Victor Ginzburg, and Wolfgang Soergel.
\newblock Koszul duality patterns in representation theory.
\newblock {\em Journal of the American Mathematical Society}, 9(2):473--527,
  1996.

\bibitem[BZ05]{BZ05}
Arkady Berenstein and Andrei Zelevinsky.
\newblock Quantum cluster algebras.
\newblock {\em Advances in Mathematics}, 195(2):405--455, 2005.

\bibitem[CHL24]{CHL24}
Wen Chang, Min Huang, and Jian-Rong Li.
\newblock Quasi-homomorphisms of quantum cluster algebras.
\newblock {\em Journal of Algebra}, 638:506--534, 2024.
\newblock Numbered citations refer to arXiv:2303.13030v3.

\bibitem[CW19]{CW19}
Sabin Cautis and Harold Williams.
\newblock Cluster theory of the coherent {Satake} category.
\newblock {\em Journal of the American Mathematical Society}, 32(3):709--778,
  2019.

\bibitem[CW23]{CWInd23}
Sabin Cautis and Harold Williams.
\newblock Ind-geometric stacks.
\newblock {\em arXiv preprint arXiv:2306.03043}, 2023.
\newblock Version 2, revised 10 January 2024.

\bibitem[CW25]{CWTame23}
Sabin Cautis and Harold Williams.
\newblock Tamely presented morphisms and coherent pullback.
\newblock {\em Mathematische Zeitschrift}, 311(4), 2025.
\newblock Article 88.

\bibitem[CW26]{CW23}
Sabin Cautis and Harold Williams.
\newblock Canonical bases for {Coulomb} branches of 4d {$\mathcal N=2$} gauge
  theories.
\newblock {\em Inventiones Mathematicae}, 245(2):797--869, 2026.

\bibitem[FG09]{FG09}
Vladimir~V. Fock and Alexander~B. Goncharov.
\newblock Cluster ensembles, quantization and the dilogarithm.
\newblock {\em Annales Scientifiques de l'\'Ecole Normale Sup\'erieure},
  42(6):865--930, 2009.

\bibitem[FST08]{FST08}
Sergey Fomin, Michael Shapiro, and Dylan Thurston.
\newblock Cluster algebras and triangulated surfaces. {Part I}: Cluster
  complexes.
\newblock {\em Acta Mathematica}, 201(1):83--146, 2008.

\bibitem[FST12]{FSTFinite}
Anna Felikson, Michael Shapiro, and Pavel Tumarkin.
\newblock Skew-symmetric cluster algebras of finite mutation type.
\newblock {\em Journal of the European Mathematical Society}, 14(4):1135--1180,
  2012.

\bibitem[FZ02]{FZ02}
Sergey Fomin and Andrei Zelevinsky.
\newblock Cluster algebras {I}: Foundations.
\newblock {\em Journal of the American Mathematical Society}, 15(2):497--529,
  2002.

\bibitem[HL10]{HL10}
David Hernandez and Bernard Leclerc.
\newblock Cluster algebras and quantum affine algebras.
\newblock {\em Duke Mathematical Journal}, 154(2):265--341, 2010.

\bibitem[Kas02]{Kas02}
Masaki Kashiwara.
\newblock On level-zero representations of quantized affine algebras.
\newblock {\em Duke Mathematical Journal}, 112(1):117--175, 2002.

\bibitem[KKKO15]{KKKO15}
Seok-Jin Kang, Masaki Kashiwara, Myungho Kim, and Se-jin Oh.
\newblock Simplicity of heads and socles of tensor products.
\newblock {\em Compositio Mathematica}, 151(2):377--396, 2015.

\bibitem[KKKO18]{KKKO18}
Seok-Jin Kang, Masaki Kashiwara, Myungho Kim, and Se-jin Oh.
\newblock Monoidal categorification of cluster algebras.
\newblock {\em Journal of the American Mathematical Society}, 31(2):349--426,
  2018.
\newblock Numbered citations use the published version; arXiv:1412.8106v1 has
  earlier numbering.

\bibitem[Li25]{Li25}
Enhan Li.
\newblock Skein and cluster algebras of punctured surfaces.
\newblock {\em arXiv preprint arXiv:2503.15037}, 2025.
\newblock arXiv:2503.15037v4, revised 25 December 2025.

\bibitem[Liu26]{LiuThesis}
Tianle Liu.
\newblock {\em A Categorified Cluster Structure on the {Coulomb} Branch of
  {$\mathcal{N}=2$} {$U(2)$} {SQCD} with {$N_f=2$}}.
\newblock PhD thesis, University of Southern California, 2026.

\bibitem[Nak16]{Nak16}
Hiraku Nakajima.
\newblock Towards a mathematical definition of {Coulomb} branches of
  3-dimensional {$\mathcal N=4$} gauge theories, {I}.
\newblock {\em Advances in Theoretical and Mathematical Physics},
  20(3):595--669, 2016.

\bibitem[SS19]{SS19}
Gus Schrader and Alexander Shapiro.
\newblock {$K$}-theoretic {Coulomb} branches of quiver gauge theories and
  cluster varieties.
\newblock {\em arXiv preprint arXiv:1910.03186}, 2019.
\newblock arXiv:1910.03186v2, revised 5 January 2026.

\bibitem[Zhu17]{Zhu17}
Xinwen Zhu.
\newblock An introduction to affine {Grassmannians} and the geometric {Satake}
  equivalence.
\newblock In {\em Geometry of Moduli Spaces and Representation Theory},
  volume~24 of {\em IAS/Park City Mathematics Series}, pages 59--154. American
  Mathematical Society, Providence, RI, 2017.

\end{thebibliography}

\endgroup

\end{document}